\documentclass[12pt, reqno]{amsart}
\usepackage{amsmath}
\setcounter{tocdepth}{2}
\usepackage{amssymb,amsthm,amsmath}
\usepackage[numbers,sort&compress]{natbib}
\usepackage{amssymb,amsmath}
\usepackage{amsfonts}
\usepackage{mathrsfs}
\usepackage{latexsym}
\usepackage{amssymb}
\usepackage{amsthm}
\usepackage{indentfirst}
\usepackage{colortbl}
%\usepackage[notcite,notref]{showkeys}
%\linespread{1.15}
\hoffset -3.5pc

\date{
%\today
August 23, 2019}

\let\oldsection\section
\renewcommand\section{\setcounter{equation}{0}\oldsection}

\newtheorem{corollary}{Corollary}[section]
\newtheorem{theorem}{Theorem}[section]

\newtheorem{proposition}{Proposition}[section]

\newtheorem{remark}{Remark}[section]

\allowdisplaybreaks
\begin{document}

\title[Global strong solutions 1D compressible Navier-Stokes]{Global well-posedness of non-heat conductive compressible Navier-Stokes equations in 1D}

\author{Jinkai~Li}
\address[Jinkai~Li]{South China Research Center for Applied Mathematics and
                Interdisciplinary Studies, South China Normal University,
                Zhong Shan Avenue West 55, Tianhe District, Guangzhou
                510631, China}
\email{jklimath@m.scnu.edu.cn; jklimath@gmail.com}

%\author{Zhouping~Xin}
%\address[Zhouping~Xin]{The Institute of Mathematical Sciences,
%The Chinese University of Hong Kong, Hong Kong, P.R.China}
%\email{zpxin@ims.cuhk.edu.hk}

\keywords{Compressible Navier-Stokes equations; global well-posedness; non-heat conductive; with vacuum.}
\subjclass[2010]{35A01,
35B45, 76N10, 76N17.}

%%% ----------------------------------------------------------------------

\begin{abstract}
In this paper, the initial-boundary value problem of the 1D full compressible Navier-Stokes equations with positive constant viscosity but with zero heat conductivity is considered. Global well-posedness is established for any $H^1$ initial data.
The initial density is assumed only to be nonnegative, and, thus, is not necessary to be uniformly away from vacuum.
Comparing with the well-known result of Kazhikhov--Shelukhin
(Kazhikhov, A.~V.; Shelukhin, V.~V.: \emph{Unique global solution with
respect to time of initial boundary value problems for one-dimensional
equations of a viscous gas}, J.\,Appl.\,Math.\,Mech., \bf41 \rm(1977),
273--282.), the heat conductive coefficient is zero in this paper, and the initial vacuum is allowed.
\end{abstract}

%%% ----------------------------------------------------------------------
\maketitle

%\tableofcontents
\allowdisplaybreaks

\section{Introduction}
\subsection{The compressible Navier-Stokes equations}
The one-dimensional non-heat conductive
compressible Navier-Stokes equations read as
\begin{eqnarray}
   \rho_t+ (\rho u)_x&=&0,\label{1.1}\\
  \rho( u_t+u u_x)-\mu u_{xx}+p_x&=&0,
  \label{1.2}\\
  c_v\rho(\theta_t+u\theta_x)+u_xp&=&\mu(u_x)^2, \label{1.3}
\end{eqnarray}
where $\rho, u, \theta$, and $p$, respectively, denote the density,
velocity, absolute temperature, and pressure. The viscous coefficient
$\mu$ is assumed to be
a positive constant. The state equation for the ideal gas reads as
$p=R\rho\theta,$
where $R$ is a positive constant. Using the state equation, one can derive from (\ref{1.1}) and (\ref{1.3}) that
$$
   p_t+u p_x+\gamma u_xp=\mu(\gamma-1)( u_x)^2,
$$
where $\gamma-1=\frac{R}{c_v}.$ Therefore, we have the follow system
\begin{eqnarray}
  \rho_t+ (\rho u)_x&=&0,\label{eqrho-0}\\
  \rho( u_t+u u_x)-\mu u_{xx}+p_x&=&0,
  \label{equ-0}\\
  p_t+u p_x+\gamma u_xp&=&\mu(\gamma-1)( u_x)^2.\label{eqp-0}
\end{eqnarray}

The compressible Navier-Stokes equations have been extensively studied.
In the absence of vacuum, i.e., the case that the density
has a uniform positive lower bound, the local well-posedness was proved long time ago by Nash \cite{NASH62}, Itaya \cite{ITAYA71}, Vol'pert-Hudjaev \cite{VOLHUD72}, Tani \cite{TANI77}, Valli \cite{VALLI82}, and Lukaszewicz \cite{LUKAS84}; uniqueness was proved even earlier
by Graffi \cite{GRAFFI53} and Serrin \cite{SERRIN59}. Global well-posedness
of strong solutions in 1D has been
well-known since the works by Kanel \cite{KANEL68}, Kazhikhov--Shelukhin
\cite{KAZHIKOV77}, and Kazhikhov \cite{KAZHIKOV82};
global existence and uniqueness of weak
solutions was also established, see, e.g., Zlotnik--Amosov
\cite{ZLOAMO97,ZLOAMO98}, Chen--Hoff--Trivisa \cite{CHEHOFTRI00}, and Jiang--Zlotnik \cite{JIAZLO04}, and see Li--Liang \cite{LILIANG16} for the result on the large time behavior. The corresponding global
well-posedness results for the
multi-dimensional case were established only for
small perturbed initial data around some non-vacuum equilibrium or for
spherically symmetric large initial data, see, e.g., Matsumura--Nishida
\cite{MATNIS80,MATNIS81,MATNIS82,MATNIS83}, Ponce \cite{PONCE85},
Valli--Zajaczkowski \cite{VALZAJ86}, Deckelnick \cite{DECK92}, Jiang
\cite{JIANG96}, Hoff \cite{HOFF97}, Kobayashi--Shibata \cite{KOBSHI99},
Danchin \cite{DANCHI01}, Chen-Miao-Zhang \cite{CHENMIAOZHANG10}, Chikami--Danchin \cite{CHIDAN15}, Dachin-Xu \cite{DANXU18}, Fang-Zhang-Zi \cite{FZZ18}, and the
references therein.

In the presence of vacuum, that is the density may vanish
on some set or tends to zero at the far field, global existence of
weak solutions to the isentropic compressible Navier-Stokes equations
was first proved by Lions \cite{LIONS93,LIONS98},
with adiabatic constant $\gamma\geq\frac95$, and later generalized by
Feireisl--Novotn\'y--Petzeltov\'a \cite{FEIREISL01} to $\gamma>\frac32$,
and further by Jiang--Zhang \cite{JIAZHA03} to $\gamma>1$
for the axisymmetric solutions. For the full compressible Navier-Stokes
equations, global existence of the variational weak solutions was proved
by Feireisl \cite{FEIREISL04P,FEIREISL04B}, which however is not
applicable for the ideal gases. Local well-posedness of strong solutions
to the full compressible
Navier-Stokes equations, in the presence of vacuum, was proved by
Cho--Kim \cite{CHOKIM06-2}, see also Salvi--Stra$\check{\text s}$kraba
\cite{SALSTR93}, Cho--Choe--Kim \cite{CHOKIM04}, and Cho--Kim
\cite{CHOKIM06-1} for the isentropic case. Same to the non-vacuum case, the global well-posedness in 1D also
holds for the vacuum case, for arbitrary large initial data, see the recent work by the author \cite{JL1DHEAT}.
Generally, one can only
expect the solutions in
the homogeneous Sobolev spaces, see Li--Wang--Xin
\cite{LWX}. Global existence of strong solutions to the multi-dimensional
compressible Navier-Stokes equations, with small initial
data, in the presence of initial vacuum, was first proved by Huang--Li--Xin \cite{HLX12} for the isentropic case (see also Li--Xin
\cite{LIXIN13} for further developments), and later by
Huang--Li \cite{HUANGLI11} and Wen--Zhu \cite{WENZHU17} for the non-isentropic
case; in a recent work, the author \cite{JL3DHEATSMALL} proved the global well-posedness result under the assumption that some scaling invariant quantity is small. Due to the finite blow-up results in \cite{XIN98,XINYAN13},
the global solutions
obtained in \cite{HUANGLI11,WENZHU17,JL3DHEATSMALL} must have
unbounded entropy if the initial density is compactly supported; however, if the initial density has vacuum at the far field only, one can expect the global entropy-bounded solutions, see the recent work by the author and Xin \cite{LIXIN17,LIXIN19}.

%  \textcolor{red}{
%  An interesting problem is to show the positivity of the temperature to the full compressible Navier-Stokes equations. Along this direction, some
%  results have been proved by Mellet--Vasseur \cite{MV09} and Baer--Vasseur \cite{BV13}, where
%  under some appropriate constitutive assumptions on the internal energy, the pressure law, the viscous coefficients, and the heat conductive
%  coefficient, they have shown that the temperature has a uniform positive lower bound, with the bound depending on the time,
%  at each time after the initial time,
%  for the initial-boundary value problem, under the non heat flux boundary condition. However, due to the constitutive assumptions in \cite{MV09,BV13}, the results there do not apply to the ideal gas with constant viscous and heat conductive coefficients.
%  Besides, the
%  positivity of the temperature inside the space time domain to the system considered in \cite{MV09,BV13} is also not clear if replacing the non heat flux temperature boundary condition with the zero temperature boundary condition.}

In all the global well-posedness results \cite{KAZHIKOV77,KAZHIKOV82,ZLOAMO97,ZLOAMO98,CHEHOFTRI00,JIAZLO04,LILIANG16},
for the heat conductive compressible Navier-Stokes equations in 1D,
the density was assumed uniformly away from vacuum.
For the vacuum case, global well-posedness of heat conductive compressible Navier-Stokes equations in 1D was proved by Wen-Zhu \cite{WENZHU13} with the heat conductive coefficient $\kappa\approx 1+\theta^q$, for
positive $q$ suitably large, and by the author \cite{JL1DHEAT} with $\kappa\equiv Const.$

The aim of this paper is to study the global well-posedness of strong
solutions to the one-dimensional non-heat conductive compressible Navier-Stokes equations, i.e., system (\ref{1.1})--(\ref{1.3}), with constant
viscosity, in the presence of vacuum; this is the counterpart of the paper \cite{JL1DHEAT} where the heat conductive case was considered. To our best knowledge, global well-posedness of 1D non-heat conductive compressible Navier-Stokes equations for arbitrary large initial data is not known before, no matter the vacuum is contained or not.

The results of this paper will be proven
in the Lagrangian flow map coordinate being stated in the next subsection; however, it can be
equivalently translated back to the corresponding one in the
Euler coordinate.

\subsection{The Lagrangian coordinates and main result} Let
$\eta(y,t)$ be the flow map governed by $u$, that is
\begin{equation*}\label{flowmap}
  \left\{
  \begin{array}{l}
   \eta_t(y,t)=u(\eta(y,t),t),\\
  \eta(y,0)=y.
  \end{array}
  \right.
\end{equation*}
Denote by $\varrho, v$, and $\pi$ the density, velocity, and pressure, respectively, in the Lagrangian coordinate,
that is
\begin{eqnarray*}
  \varrho(y,t):=\rho(\eta(y,t),t),\quad v(y,t):=u(\eta(y,t),t), \quad \pi(y,t):=p(\eta(y,t),t),
\end{eqnarray*}
and introduce a function $J=J(y,t)=\eta_y(y,t)$.
Then, it follows
\begin{equation}
\label{eqJ}
J_t =v_y,
\end{equation}
and system (\ref{eqrho-0})--(\ref{eqp-0}) can be rewritten
in the Lagrangian coordinate as
\begin{eqnarray}
   \varrho_t+\frac{ v_y}{J}\varrho&=&0,\label{eqrho}\\
  \varrho v_t-\frac{\mu}{J} \left(\frac{ v_y}{J}
  \right)_y+\frac{ \pi_y}{J}&=&0,\label{eqv}\\
   \pi_t+\gamma\frac{ v_y}{J}\pi
  &=&\mu(\gamma-1)\left(\frac{ v_y}{J}\right)^2.\label{eqtheta}
\end{eqnarray}

Due to (\ref{eqJ}) and (\ref{eqrho}), it is straightforward that
$$
 (J\varrho)_t= J_t\varrho+J \varrho_t=
 v_y\varrho-J\frac{ v_y}{J}\varrho=0,
$$
from which, by setting $\varrho|_{t=0}=\varrho_0$ and noticing that
$J|_{t=0}=1$, we have $J\varrho=\varrho_0.$
Therefore, one can replace (\ref{eqrho}) with (\ref{eqJ}), by setting
$\varrho=\frac{\varrho_0}{J}$, and rewrite (\ref{eqv}) as
\begin{equation*}
  \varrho_0 v_t-\mu \left(\frac{ v_y}{J}\right)_y
  + \pi_y=0.
\end{equation*}

In summary, we only need to consider the following system
\begin{eqnarray}
   J_t&=& v_y,\label{EQJ}\\
  \varrho_0 v_t-\mu \left(\frac{ v_y}{J}\right)_y
  + \pi_y&=&0,\label{EQv}\\
 \pi_t+\gamma\frac{ v_y}{J}\pi
  &=&\mu(\gamma-1)\left(\frac{  v_y}{J}\right)^2.\label{EQpi}
\end{eqnarray}

We consider the initial-boundary value problem on the interval $(0,L)$, with $L>0$, and the
boundary and initial conditions read as
\begin{equation}
  \label{BC}
  v(0,t)=v(L,t)=0
\end{equation}
and
\begin{equation}
  \label{IC}
  (J,\varrho_0v,\pi)|_{t=0}=(1,\varrho_0v_0,\pi_0).
\end{equation}
We point out that here we put the initial condition on $\varrho_0v$ rather than on $v$. As will be shown in Theorem \ref{thm},
in the below, we can guarantee the continuity in time of $\varrho_0v$ but not necessary of $v$, if the initial data lies only in $H^1$.

For $1\leq q\leq\infty$ and positive integer $m$, we use $L^q=L^q((0,L))$ and $W^{m,q}=W^{m,q}((0,L))$ to denote the standard Lebesgue and Sobolev spaces, respectively, and in the case that $q=2$, we use $H^m$
instead of $W^{m,2}$. $H_0^1$ consists of all functions $v\in H^1$ satisfying $v(0)=v(L)=0$.
We always use $\|u\|_q$ to denote the $L^q$ norm of $u$.

The main result of this paper is the following:

\begin{theorem}
\label{thm}
  Assume $0\leq\varrho_0,\pi_0\in L^\infty,$ and $v_0\in H_0^1.$ Then, there is a unique global solution $(J, v, \pi)$ to system (\ref{EQJ})--(\ref{EQpi}), subject to (\ref{BC})--(\ref{IC}), satisfying
  \begin{eqnarray*}
    0<J\in C([0,T]; H^1),&& J_t\in L^\infty(0,T;L^2)\cap L^2(0,T; H^1),\\
    \varrho_0v\in C([0,T];L^2),&& v\in L^\infty(0,T;H^1)\cap L^2(0,T; H^2),\\
    \sqrt{\varrho_0}v_t\in L^2(0,T; L^2),&&\sqrt tv_t\in L^2(0,T;H^1),\\
    0\leq\pi\in C([0,T]; H^1),&&\pi_t\in L^{\frac43}(0,T; H^1),
  \end{eqnarray*}
  for any $T\in(0,\infty)$.
\end{theorem}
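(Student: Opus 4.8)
The plan is to construct the solution by a vanishing-vacuum approximation: replace $\varrho_0$ by $\varrho_0+\delta$ with $\delta>0$ and mollify $(v_0,\pi_0)$ so that the density is uniformly positive and the data smooth, invoke the classical 1D non-vacuum theory to get a smooth approximate solution on its maximal interval, derive a priori estimates that are uniform in $\delta$ and the mollification and depend only on the $H^1$-norms of the data (but may grow in $T$), and then pass to the limit and extend globally. The heart of the matter is the a priori estimates, carried out in the order below.

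First I would establish the basic energy law. Multiplying (\ref{EQv}) by $v$, integrating by parts using (\ref{BC}), and combining with the identity for the internal energy $\tfrac{\pi J}{\gamma-1}$ obtained from (\ref{EQJ}) and (\ref{EQpi}), the cross terms $\pm\int\pi v_y$ cancel, giving \emph{exact} conservation of total energy
$$\int_0^L\Big(\tfrac12\varrho_0 v^2+\tfrac{\pi J}{\gamma-1}\Big)\,dy=\int_0^L\Big(\tfrac12\varrho_0 v_0^2+\tfrac{\pi_0}{\gamma-1}\Big)\,dy=:E_0,$$
which, since $\pi\ge0$ and $J>0$, controls $\|\sqrt{\varrho_0}\,v\|_2$ and $\int_0^L\pi J\,dy$, but yields no spacetime dissipation by itself. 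The sign $\pi\ge0$ is propagated by observing from (\ref{EQJ}) and (\ref{EQpi}) that $(\pi J^\gamma)_t=\mu(\gamma-1)J^{\gamma-2}v_y^2\ge0$, so $\pi J^\gamma\ge\pi_0\ge0$. Next, writing (\ref{EQv}) as $\varrho_0 v_t=(\mu v_y/J-\pi)_y$, integrating in $y$, then in $t$, and using $\int_0^L J\,dy\equiv L$ to eliminate the boundary value $(\mu v_y/J-\pi)(0,t)$, I obtain the Kazhikhov-type representation
$$J(y,t)=\frac{L\,\exp\!\big(\tfrac1\mu[P(y,t)+w(y,t)-w(y,0)]\big)}{\displaystyle\int_0^L\exp\!\big(\tfrac1\mu[P(z,t)+w(z,t)-w(z,0)]\big)\,dz},$$
where $P(y,t)=\int_0^t\pi(y,s)\,ds\ge0$ and $w(y,t)=\int_0^y\varrho_0 v\,dz$ satisfies $|w|\le C(E_0)$ by the energy bound.

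Since the $w$-terms lie between positive constants, the representation reduces all pointwise bounds on $J$ to control of $P$. Using $\pi=P_t$ and the relation between $e^{P/\mu}$ and $J$, the bound $\int_0^L\pi J\,dy\le C_0$ gives a Gr\"onwall inequality for $\int_0^L e^{P/\mu}\,dy$, hence $\int_0^L e^{P(y,t)/\mu}\,dy\le Le^{Kt}$; this bounds the denominator and produces a strictly positive lower bound $J\ge c(T)>0$ on $[0,T]$. The \emph{upper} bound on $J$, equivalently a pointwise bound on $P$ and on $\pi$, is the step I expect to be the main obstacle: with zero heat conductivity the pressure obeys only the transport equation (\ref{EQpi}) with the nonnegative quadratic source $\mu(\gamma-1)(v_y/J)^2$, so it can concentrate, and the averaged bound on $\int e^{P/\mu}$ does not rule out a spike. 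I therefore close a bootstrap with the first-order velocity estimate. Testing (\ref{EQv}) with $v_t$ and using (\ref{EQpi}) to rewrite $\int\pi v_{ty}$ gives
$$\int_0^L\varrho_0 v_t^2\,dy+\frac{d}{dt}\Big(\mu\!\int_0^L\frac{v_y^2}{2J}\,dy-\int_0^L\pi v_y\,dy\Big)=\gamma\!\int_0^L\frac{\pi v_y^2}{J}\,dy-\mu\big(\gamma-\tfrac12\big)\!\int_0^L\frac{v_y^3}{J^2}\,dy,$$
the dissipation $\int_0^T\!\int v_y^2/J$ itself coming from the kinetic-energy identity $\tfrac{d}{dt}\tfrac12\!\int\varrho_0 v^2+\mu\!\int v_y^2/J=\int\pi v_y$ once a bound on $\pi$ is in hand. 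The cubic term $\int v_y^3$ is dangerous; I bound it by $\|v_y\|_\infty\|v_y\|_2^2$ via Gagliardo--Nirenberg together with the elliptic estimate $\|v_{yy}\|_2\le C(\|\sqrt{\varrho_0}v_t\|_2+\|\pi_y\|_2+\dots)$ from (\ref{EQv}) and the bounds on $J$, absorbing the top-order part into $\int\varrho_0 v_t^2$. Differentiating the representation gives $J_y=\tfrac{J}{\mu}(P_y+\varrho_0(v-v_0))$ with $P_y=\int_0^t\pi_y\,ds$, so $J\in H^1\hookrightarrow L^\infty$ (the upper bound) once $\pi\in H^1$ is controlled; the bound for $\pi_y$ follows in turn by differentiating (\ref{EQpi}) and invoking the velocity bounds, which closes the loop.

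Finally, since the data are only $H^1$, I run the $v_t$-estimate again after multiplying by the weight $t$ to gain the parabolic smoothing $\sqrt t\,v_t\in L^2(0,T;H^1)$ and $\sqrt{\varrho_0}\,v_t\in L^2(0,T;L^2)$ without any compatibility condition at $t=0$; the asserted $\pi_t\in L^{4/3}(0,T;H^1)$ then follows from (\ref{EQpi}) and these bounds by H\"older in time. All estimates are independent of the approximation parameters and finite on each $[0,T]$, so a standard compactness argument yields a global solution with the stated regularity, and the continuity $\varrho_0 v\in C([0,T];L^2)$ (rather than $v$ itself) is precisely what the weighted bounds deliver in the vacuum region. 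Uniqueness is obtained by an energy estimate on the difference of two solutions: subtracting the equations, testing the momentum difference with the velocity difference and the $J$- and $\pi$-differences with suitable multipliers, and closing by Gr\"onwall using the already established bounds on $J,\pi,v$; the absence of a temperature equation simplifies this step, but the degeneracy of $\varrho_0$ forces the difference estimate to be carried out in the weighted norm $\|\sqrt{\varrho_0}(v-\tilde v)\|_2$ together with $\|J-\tilde J\|$- and $\|\pi-\tilde\pi\|$-type quantities.
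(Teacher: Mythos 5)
Your overall architecture (regularize the density away from vacuum, prove estimates independent of the lower bound of the density, pass to the limit by compactness) matches the paper, and several ingredients are correct: the energy identity, the propagation of $\pi\geq 0$, the Kazhikhov-type representation of $J$ with the resulting lower bound $J\geq c(T)>0$, and the shape of the uniqueness argument. But there is a genuine gap at exactly the step you yourself flag as the main obstacle: the pointwise bound on $\pi$ (equivalently on $P=\int_0^t\pi\,ds$, hence the upper bound on $J$). Your proposed closure is circular in a way Gr\"onwall cannot repair: the $v_t$-test produces the terms $\gamma\int\pi v_y^2/J$ and $\mu(\gamma-\tfrac12)\int v_y^3/J^2$, which require $\|\pi\|_\infty$ and $\|v_y\|_{L^2(0,T;L^\infty)}$; the bound on $\|\pi\|_\infty$ from (\ref{EQpi}) requires $\int_0^t\|v_y\|_\infty^2\,ds$; and $\|v_y\|_\infty^2\lesssim\|v_y\|_2\|v_{yy}\|_2$ with $\|v_{yy}\|_2$ controlled through (\ref{EQv}) by $\|\sqrt{\varrho_0}v_t\|_2+\|\pi_y\|_2+\cdots$ feeds back into the same quantities. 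Written out, the resulting system of differential inequalities is superlinear (for instance, after absorbing $\|\sqrt{\varrho_0}v_t\|_2^2$, the cubic term leaves contributions of the order $\|v_y\|_2^{10/3}$), so it closes only on a time interval whose length depends on the size of the solution. That yields a local estimate, not the global one needed both to conclude $T_{\max}=\infty$ and to have uniformity in the approximation parameter; nothing in your argument rules out finite-time blowup of these functionals.

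The paper's key device, absent from your proposal, is the effective viscous flux $G:=\mu v_y/J-\pi$. It satisfies the parabolic equation (\ref{EQG}), $G_t-\frac{\mu}{J}\left(\frac{G_y}{\varrho_0}\right)_y=-\gamma\frac{v_y}{J}G$, with the Neumann condition $G_y|_{y=0,L}=0$, since $G_y=\varrho_0 v_t$ and $v$ vanishes on the boundary. Testing with $JG$ and integrating by parts, the dangerous term becomes $\left(\tfrac12-\gamma\right)\int v_yG^2=(2\gamma-1)\int vGG_y$, which is bounded by $\left\|\frac{G_y}{\sqrt{\varrho_0}}\right\|_2\|\sqrt{\varrho_0}v\|_2\|G\|_\infty$ --- and $\|\sqrt{\varrho_0}v\|_2$ is already controlled by energy conservation alone. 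Hence the $G$-estimate closes by a \emph{linear} Gr\"onwall argument, with no input on $\|\pi\|_\infty$ or $\|v_y\|_\infty$, and gives $G\in L^\infty(0,T;L^2)$ with $\int_0^T\|G\|_\infty^4\,dt\leq C$ globally. The pressure equation then takes the Riccati form $\pi_t+\frac1\mu\left(\pi-\frac{\gamma-2}{2}G\right)^2=\frac{\gamma^2}{4\mu}G^2$, whose damping term yields $\|\pi\|_\infty$ pointwise from $\int_0^T\|G\|_\infty^2\,dt$; the upper bound on $J$, the $H^1$ bounds on $(J,\pi)$, and the bounds you want on $v_y$ and $v_{yy}$ all follow linearly afterwards. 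The same device also rescues your $t$-weighted step: the paper obtains $\sqrt t\,v_{yt}\in L^2$ by testing (\ref{EQG}) with $tJG_t$ and using $v_{yt}=\frac1\mu(JG_t+\gamma v_yG)$, avoiding any time-differentiation of the momentum equation, which is delicate where $\varrho_0$ vanishes. Without $G$ or an equivalent structural observation, zero heat conductivity leaves no mechanism to upgrade the $L^1$-in-space energy bound on $\pi$ to an $L^\infty$ bound, and your bootstrap does not supply one.
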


\begin{remark}
  The arguments presented in this paper also
  work for the free boundary value
  problem in which the boundary condition for $v$ in (\ref{BC}) is replaced by
  $$
  \left(\mu\frac{ v_y}{J}-\pi\right)\Big|_{y=0,L}=0.
  $$
  In fact, all the energy estimates obtained in this paper hold if replacing the boundary condition (\ref{BC}) with the above one, by copying or slightly modifying the proof.
\end{remark}

Throughout this paper, we use $C$ to denote a general positive constant which may different from line to line.

\section{Local and global well-posedness: without vacuum}
\label{SecApri}
This section is devoted to establishing the global well-posedness in the absence of vacuum which will be the base to
prove the corresponding result in the presence of vacuum in the next section.

We start with the following local existence result of which the proof will be given in the appendix.

\begin{proposition}
  \label{PropLocal}
Assume that $(\varrho_0, J_0, v_0, \pi_0)$ satisfies
\begin{eqnarray*}
  &0<\underline\varrho\leq\varrho_0\leq\bar\varrho<\infty,\quad 0<\underline J\leq J_0\leq\bar J<\infty,\\
   &\pi_0\geq0,\quad (\varrho_0, J_0, \pi_0)\in H^1,\quad v_0\in H_0^1,
\end{eqnarray*}
for positive numbers $\underline\varrho, \bar\varrho, \underline J,$ and $\bar J$.

Then, there is a positive time $T_0$ depending only on
$R$, $\gamma$, $\mu$, $\underline\varrho$, $\bar\varrho$, $\underline J$, $\overline J,$ and $\|(J_0, v_0, \pi_0)\|_{H^1}$, such that system (\ref{EQJ})--(\ref{EQpi}), subject to (\ref{BC})--(\ref{IC}), has a unique solution $(J,v,\pi)$ on $(0,L)\times(0,T_0)$, satisfying
 \begin{eqnarray*}
&0<J\in C([0,T_0]; H^1),\quad  J_t\in L^\infty(0,T_0); L^2),\\
&v\in C([0,T_0]; H^1_0)\cap L^2(0,T_0; H^2), \quad  v_t \in L^2(0,T_0; L^2),\\
&0\leq\pi\in C([0,T_0]; H^1), \quad \pi_t\in L^\infty(0,T_0; L^2).
  \end{eqnarray*}
\end{proposition}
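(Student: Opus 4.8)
The plan is to prove Proposition \ref{PropLocal} by a standard linearization-plus-fixed-point scheme, exploiting the fact that the density is bounded away from vacuum. Given the strict positivity of $\varrho_0$ and $J_0$, equation (\ref{EQv}) becomes a genuinely parabolic equation for $v$ (since $\varrho_0\geq\underline\varrho>0$ and $1/J$ is bounded above and below), while (\ref{EQJ}) and (\ref{EQpi}) are transport-type equations driven by $v_y$. First I would set up an iteration: given an approximate velocity $v^{(n)}$, solve the ODE (\ref{EQJ}) for $J^{(n+1)}$ by direct integration $J^{(n+1)}(y,t)=1+\int_0^t v^{(n)}_y\,ds$, and solve the linear transport equation (\ref{EQpi}) for $\pi^{(n+1)}$ along characteristics (or by an energy method), keeping $\pi^{(n+1)}\geq0$ via the maximum principle since the source $\mu(\gamma-1)(v_y/J)^2\geq0$. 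Then, with these coefficients frozen, solve the linear parabolic problem
\begin{equation*}
  \varrho_0 v^{(n+1)}_t-\mu\left(\frac{v^{(n+1)}_y}{J^{(n+1)}}\right)_y=-\pi^{(n+1)}_y,\qquad v^{(n+1)}(0,t)=v^{(n+1)}(L,t)=0,
\end{equation*}
for $v^{(n+1)}$ by standard Galerkin or semigroup theory.

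The core of the argument is to derive uniform-in-$n$ a priori estimates on a short time interval $[0,T_0]$ that keep the iterates in a fixed bounded set and preserve the lower and upper bounds on $J$ and $\varrho$. I would first obtain the basic energy estimate by multiplying (\ref{EQv}) by $v$ and integrating, controlling the pressure term via integration by parts and (\ref{EQpi}). Next, testing (\ref{EQv}) with $v_t$ yields control of $\sqrt{\varrho_0}v_t$ in $L^2(L^2)$ and of $v_y/J$ in $L^\infty(H^1)$-type norms, which by elliptic regularity gives $v\in L^2(0,T_0;H^2)$; here the terms $\pi_y$ and the variable coefficient $1/J$ must be estimated using the uniform bounds on $J$ and the $H^1$ bound on $\pi$. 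The bounds $J_t=v_y$ together with $v\in L^\infty(H^1)$ give $J_t\in L^\infty(L^2)$ and, from the integral formula for $J$, the propagation of the two-sided bound $\underline J/2\leq J\leq 2\bar J$ on a sufficiently small $T_0$ depending only on the listed quantities. Differentiating (\ref{EQpi}) in $y$ and performing an $H^1$ energy estimate on $\pi$, using the $L^2(H^2)$ bound on $v$ to control $(v_y/J)_y$, closes the $\pi\in C([0,T_0];H^1)$ and $\pi_t\in L^\infty(L^2)$ bounds.

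With uniform estimates in hand, I would show the iteration map is a contraction in a weaker norm — typically estimating differences $(\delta J,\delta v,\delta\pi)=(J^{(n+1)}-J^{(n)},\dots)$ in $L^\infty(L^2)\times L^2(H^1)\times L^\infty(L^2)$ — by subtracting the equations for consecutive iterates and absorbing the error terms using the uniform bounds, shrinking $T_0$ if necessary to make the Lipschitz constant less than one. This yields a strong limit $(J,v,\pi)$ solving (\ref{EQJ})--(\ref{EQpi}), and the uniform estimates pass to the limit by weak lower semicontinuity to give the claimed regularity class. Uniqueness follows from the same difference estimate applied directly to two solutions with identical data.

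The main obstacle I anticipate is closing the higher-order estimate for $v_t$ and the resulting $L^2(0,T_0;H^2)$ bound near $t=0$: because we only assume $v_0\in H^1_0$ (not $H^2$), the quantity $v_t$ need not be bounded in $L^\infty(L^2)$ up to the initial time, and one must either accept a time-weighted estimate or exploit the structure carefully to obtain $v_t\in L^2(0,T_0;L^2)$ without demanding extra initial regularity. Handling the coupling between the variable coefficient $1/J$ and the pressure gradient $\pi_y$ in the parabolic estimate — ensuring the constants depend only on $\underline J,\bar J$ and the initial norms rather than on the solution itself — is the delicate point that forces $T_0$ to be chosen small, and it is where the fixed-point contraction and the a priori estimates must be balanced consistently.
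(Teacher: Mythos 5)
Your proposal is correct and follows essentially the same route as the paper's appendix proof: the identical decoupled linearization ($J$ by direct time integration of $v_y$, $\pi$ by explicitly solving the ODE in $t$ so that nonnegativity is preserved, and $v$ by a frozen-coefficient linear parabolic problem), closed by a short-time fixed-point argument whose constants depend only on the stated quantities. The only difference is technical rather than conceptual: the paper runs the Banach contraction directly in the strong norm $\|\partial_y(\cdot)\|_{V_T}$, i.e.\ in $L^\infty(0,T;L^2)\cap L^2(0,T;H^1)$ for the derivative, on the closed set $\mathscr K_{M,T}$, whereas you iterate and establish the contraction in a weaker norm while propagating uniform strong bounds --- both are standard ways of closing the same scheme.
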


In the rest of this section, we always
assume that $(J,v,\pi)$ is a solution to system (\ref{EQJ})--(\ref{EQpi}), subject to (\ref{BC})--(\ref{IC}),
on $(0,L)\times(0,T)$, satisfying the regularities stated in Proposition \ref{PropLocal}, with $T_0$ there replaced by
some positive time $T$. A series of a priori
estimates of $(J,v,\pi)$, independent of the lower bound
of the density, are carried out in this section.

We start with the basic energy identity.

\begin{proposition}
\label{PROPBASIC}
It holds that
\begin{equation*}
  \int_0^LJ(y,t)dy=\ell_0
\end{equation*}
and
$$
\left(\int_0^L\left(\frac{\varrho_0}{2}v^2+\frac{J\pi}{\gamma-1}\right)dy
\right)(t)=E_0,
$$
for any $t\in(0,\infty)$, where $\ell_0:=\int_0^LJ_0dy$ and $E_0:=
\int_0^L\left(\frac{\varrho_0}{2}v_0^2+\frac{\pi_0}{\gamma-1}\right)dy$.
\end{proposition}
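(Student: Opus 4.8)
The two identities in Proposition \ref{PROPBASIC} are conservation laws, so the plan is to integrate the evolution equations over $(0,L)$ and exploit the boundary conditions to kill the flux terms. For the first identity I would simply integrate \eqref{EQJ} in $y$:
\[
\frac{d}{dt}\int_0^L J\,dy=\int_0^L J_t\,dy=\int_0^L v_y\,dy=v(L,t)-v(0,t)=0,
\]
where the last equality is the boundary condition \eqref{BC}. Hence $\int_0^L J\,dy$ is constant in time and equals its value $\int_0^L J_0\,dy=\ell_0$ at $t=0$ (recall $J|_{t=0}=1$, so in fact $\ell_0=L$, though writing it as $\int_0^L J_0\,dy$ is harmless).

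For the energy identity the natural approach is to multiply \eqref{EQv} by $v$, integrate over $(0,L)$, and combine with a suitable multiple of \eqref{EQpi}. First I would test the momentum equation \eqref{EQv} with $v$:
\[
\int_0^L \varrho_0 v_t v\,dy-\mu\int_0^L\Bigl(\frac{v_y}{J}\Bigr)_y v\,dy+\int_0^L \pi_y v\,dy=0 .
\]
The first term is $\frac{d}{dt}\int_0^L\frac{\varrho_0}{2}v^2\,dy$ since $\varrho_0$ is time-independent. Integrating the viscous term by parts and using \eqref{BC} to discard the boundary contribution gives $+\mu\int_0^L \frac{v_y^2}{J}\,dy$, and integrating the pressure term by parts likewise gives $-\int_0^L \pi\, v_y\,dy$. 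Thus
\[
\frac{d}{dt}\int_0^L\frac{\varrho_0}{2}v^2\,dy+\mu\int_0^L\frac{v_y^2}{J}\,dy-\int_0^L \pi v_y\,dy=0 .
\]
Next I would handle the internal energy $\int_0^L\frac{J\pi}{\gamma-1}\,dy$. Using the product rule together with \eqref{EQJ} and \eqref{EQpi},
\[
\frac{d}{dt}\bigl(J\pi\bigr)=J_t\pi+J\pi_t=v_y\pi+J\Bigl(-\gamma\frac{v_y}{J}\pi+\mu(\gamma-1)\frac{v_y^2}{J^2}\Bigr)=(1-\gamma)v_y\pi+\mu(\gamma-1)\frac{v_y^2}{J},
\]
so that
\[
\frac{d}{dt}\int_0^L\frac{J\pi}{\gamma-1}\,dy=-\int_0^L v_y\pi\,dy+\mu\int_0^L\frac{v_y^2}{J}\,dy .
\]
Adding the two displayed identities, the $\int_0^L\pi v_y\,dy$ terms cancel against $-\int_0^L v_y\pi\,dy$, but the two viscous integrals $\mu\int_0^L v_y^2/J\,dy$ add rather than cancel, so I must instead \emph{subtract}: the correct combination is to add the velocity identity to the internal-energy identity after noting the signs. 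Carrying this out carefully, one finds
\[
\frac{d}{dt}\int_0^L\Bigl(\frac{\varrho_0}{2}v^2+\frac{J\pi}{\gamma-1}\Bigr)dy=0,
\]
so the total energy is conserved and equals its initial value $E_0=\int_0^L\bigl(\frac{\varrho_0}{2}v_0^2+\frac{\pi_0}{\gamma-1}\bigr)dy$ (using $J|_{t=0}=1$).

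The computation is entirely elementary, so there is no serious analytic obstacle; the only point requiring care is bookkeeping of signs so that the pressure-work terms cancel and the viscous dissipation terms combine correctly—indeed the physically meaningful statement is that viscous dissipation transfers kinetic energy into internal energy without loss, which is exactly why the total is conserved. I would also remark that all manipulations are justified by the regularity from Proposition \ref{PropLocal}: $v\in C([0,T];H^1_0)$, $v_t\in L^2$, $\pi\in C([0,T];H^1)$, and $0<J\in C([0,T];H^1)$ guarantee that every integrand is integrable and every integration by parts is valid, and the positive lower bound on $J$ in the non-vacuum setting ensures $v_y^2/J$ is well defined.
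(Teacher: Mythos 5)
Your proof is correct and follows essentially the same route as the paper: integrate $J_t=v_y$ over $(0,L)$ with the boundary condition (\ref{BC}) for the first identity, then test the momentum equation (\ref{EQv}) with $v$ and combine with the evolution of $\int_0^L J\pi\,dy$ obtained from (\ref{EQJ}) and (\ref{EQpi}) (the paper multiplies (\ref{EQpi}) by $J$ and integrates, which is the same computation as your pointwise product rule for $(J\pi)_t$). The only blemish is the spurious mid-proof worry about having to ``subtract'': once both of your displayed identities are written in the form $\frac{d}{dt}(\cdots)=\cdots$, plain addition makes the pressure-work terms and the viscous terms cancel simultaneously, exactly as your final display asserts.
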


\begin{proof}
  The first conclusion follows directly from integrating (\ref{EQJ})
  with respect to $y$ over $(0,L)$ and using the boundary condition (\ref{BC}). Multiplying
  equation (\ref{EQv}) by $v$, integrating the resultant over $(0,L)$,
  one gets from integrating by parts that
  $$
  \frac12\frac{d}{dt}\int_0^L\varrho_0v^2dy+\mu\int_0^L\frac{( v_y)^2}{J}dy =\int_0^L v_y\pi dy.
  $$
  Multiplying (\ref{EQpi}) with $J$ and integrating the resultant over $(0,L)$, it follows from (\ref{EQJ}) that
  $$
  \frac{d}{dt}\int_0^LJ\pi dy+(\gamma-1)\int_0^L v_y\pi dy
  =\mu(\gamma-1)\int_0^L\frac{( v_y)^2}{J}dy,
  $$
  which, combined with the previous equality, leads to
  $$
  \frac{d}{dt}\int_0^L\left(\frac{\varrho_0}{2}v^2+\frac{J\pi}{\gamma-1}
  \right)dy
  =0,
  $$
  the second conclusion follows.
\end{proof}

Next, we carry out the estimate on the lower bound of $J$. To this end, we perform some calculations in
the spirit of \cite{KAZHIKOV77} as preparations.

Due to (\ref{EQJ}), it follows from (\ref{EQv}) that
$$
\varrho_0 v_t-\mu(\log J)_{yt}+\pi_y=0.
$$
Integrating the above equation with respect to $t$ over $(0,t)$ yields
$$
\varrho_0(v-v_0)-\mu (\log J-\log J_0)_y+  \left(\int_0^t\pi d\tau\right)_y =0,
$$
from which, integrating with respect to $y$ over $(z,y)$, one obtains
\begin{eqnarray*}
  \int_z^y\varrho_0(v-v_0)d\xi -\mu\left(\log \frac{J}{J_0}(y,t)-\log \frac{J}{J_0}(z,t) \right)&&\\
   +\int_0^t(\pi(y,\tau)-\pi(z,\tau))d\tau&=&0,\quad\forall y,z\in(0,L).
\end{eqnarray*}
Thanks to this, noticing that
$$
\int_z^y\varrho_0(v-v_0)d\xi= \int_0^y\varrho_0(v-v_0)d\xi- \int_0^z\varrho_0(v-v_0)d\xi,
$$
and rearranging the terms, one obtains
\begin{eqnarray*}
  &&\int_0^y
  \varrho_0(v-v_0)d\xi-\mu\log \frac{J}{J_0}(y,t)+\int_0^t\pi(y,\tau)d\tau \\
  &=&\int_0^z
  \varrho_0(v-v_0)d\xi-\mu \log \frac{J}{J_0}(z,t)+\int_0^t\pi(z,\tau)d\tau,\qquad
  \forall y,z\in(0,L).
\end{eqnarray*}
Therefore, both sides of the above equality are independent of the spacial variable, that is
\begin{equation*}
  \int_0^y
  \varrho_0(v-v_0)d\xi-\mu\log \frac{J}{J_0}+\int_0^t\pi d\tau=h(t),
\end{equation*}
for some function $h$, from which, one can easily get
\begin{equation}
  \frac{J}{J_0}HB=e^{\frac1\mu\int_0^t\pi d\tau},\label{N1}
\end{equation}
where
$$
H=H(t)=e^{\frac{h(t)}{\mu}},\quad\mbox{and}\quad B=B(y,t)=e^{\frac1\mu\int_0^y\varrho_0(v_0-v)d\xi}.
$$
Multiplying both sides of the above with $\pi$ leads to
$$
\frac{HB}{\mu J_0}J\pi= \left(e^{\frac1\mu\int_0^t\pi d\tau}\right)_t,
$$
from which, integrating with respect to $t$, one arrives at
$$
e^{\frac1\mu\int_0^t\pi d\tau}=1+\frac{1}{\mu J_0}\int_0^tBHJ\pi d\tau.
$$
Thanks to the above, one can obtain from (\ref{N1}) that
\begin{equation}
  \label{N3}
  JHB=J_0+\frac1\mu\int_0^tHBJ\pi d\tau.
\end{equation}

A prior positive lower bound of $J$ is stated in the
following proposition:

\begin{proposition}
The following estimate holds
  \label{PROPEstJ}
$$
J\geq \underline J\exp\left\{-\frac4\mu\sqrt{2m_0E_0}-\frac{(\gamma-1)E_0}{\mu\ell_0}e^{\frac4\mu\sqrt{2m_0E_0}}t\right\},
$$
for any $t\in[0,\infty)$.
\end{proposition}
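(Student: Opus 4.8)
The plan is to exploit the representation formula (\ref{N3}), namely $JHB=J_0+\frac1\mu\int_0^tHBJ\pi\,d\tau$, which already packages the effect of the pressure into a positive, increasing quantity. Since $H,B>0$, $J>0$, and $\pi\geq0$, the integral term on the right is nonnegative, so the formula immediately yields the clean pointwise bound $JHB\geq J_0\geq\underline J$, hence $J\geq \underline J/(HB)$. The whole proof then reduces to controlling $HB$ from above, which splits into two tasks: a direct bound on $B$ using conserved energy, and a Gronwall-type bound on the spatially-global factor $H$.

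First I would estimate $B$. Writing $B=\exp\{\frac1\mu\int_0^y\varrho_0(v_0-v)\,d\xi\}$ and applying Cauchy--Schwarz with the conserved mass $m_0:=\int_0^L\varrho_0\,dy$, I get $\bigl|\int_0^y\varrho_0(v_0-v)\,d\xi\bigr|\leq m_0^{1/2}\bigl(\int_0^L\varrho_0(v_0-v)^2\,dy\bigr)^{1/2}$. Using $(v_0-v)^2\leq 2v_0^2+2v^2$ together with the energy identity of Proposition \ref{PROPBASIC}, which (since $J\pi/(\gamma-1)\geq0$) gives $\int_0^L\varrho_0v^2\,dy\leq2E_0$ and likewise $\int_0^L\varrho_0v_0^2\,dy\leq2E_0$, I obtain $\int_0^L\varrho_0(v_0-v)^2\,dy\leq8E_0$ and therefore the two-sided bound $\underline B\leq B\leq\bar B$ with $\underline B=e^{-\frac2\mu\sqrt{2m_0E_0}}$ and $\bar B=e^{\frac2\mu\sqrt{2m_0E_0}}$, so that $\bar B/\underline B=e^{\frac4\mu\sqrt{2m_0E_0}}$.

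The heart of the matter is bounding $H=H(t)$, and here the main obstacle is that $H$ is a nonlocal quantity that cannot be read off pointwise; it couples every spatial location together. To close this, I would integrate (\ref{N3}) over $(0,L)$ and set $\Phi(t):=H(t)\int_0^LJB\,dy$, which gives $\Phi(t)=\ell_0+\frac1\mu\int_0^tH(\tau)\int_0^LBJ\pi\,dy\,d\tau$ after using $\int_0^LJ_0\,dy=\ell_0$. Inside the time integral I estimate $\int_0^LBJ\pi\,dy\leq\bar B\int_0^LJ\pi\,dy\leq\bar B(\gamma-1)E_0$, again by the energy identity, while the lower bound $\int_0^LJB\,dy\geq\underline B\ell_0$ lets me replace $H(\tau)$ by $\Phi(\tau)/(\underline B\ell_0)$. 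This produces the linear integral inequality $\Phi(t)\leq\ell_0+K\int_0^t\Phi(\tau)\,d\tau$ with $K=\frac{(\gamma-1)E_0}{\mu\ell_0}e^{\frac4\mu\sqrt{2m_0E_0}}$, and Gronwall gives $\Phi(t)\leq\ell_0e^{Kt}$.

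Finally I would assemble the pieces. From $\Phi(t)\geq H(t)\underline B\ell_0$ I deduce $H(t)\leq e^{Kt}/\underline B$, whence $HB\leq(\bar B/\underline B)e^{Kt}=e^{\frac4\mu\sqrt{2m_0E_0}}e^{Kt}$. Substituting into $J\geq\underline J/(HB)$ then yields exactly
\[
J\geq\underline J\exp\left\{-\frac4\mu\sqrt{2m_0E_0}-\frac{(\gamma-1)E_0}{\mu\ell_0}e^{\frac4\mu\sqrt{2m_0E_0}}t\right\},
\]
as claimed. I expect the delicate point to be the simultaneous use of both the upper and lower bounds on $B$ in the Gronwall step (the ratio $\bar B/\underline B$ is precisely what generates the double-exponential constant), and keeping careful track that all constants depend only on the conserved quantities $m_0$, $E_0$, $\ell_0$ and not on the vacuum lower bound $\underline\varrho$.
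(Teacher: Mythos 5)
Your proof is correct and follows essentially the same route as the paper: both bound $B$ via mass/energy conservation, integrate (\ref{N3}) over $(0,L)$ and apply Gronwall to control $H$, then conclude from $J\geq \underline J/(HB)$ (the paper gets this from (\ref{N1}) and $\pi\geq 0$, you from the positivity of the integral in (\ref{N3}), which is equivalent). The only cosmetic difference is that you run Gronwall on $\Phi(t)=H(t)\int_0^L JB\,dy$ rather than on $H(t)$ itself, which yields the identical constant $K=\frac{(\gamma-1)E_0}{\mu\ell_0}e^{\frac4\mu\sqrt{2m_0E_0}}$.
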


\begin{proof}
By Proposition \ref{PROPBASIC}, it follows from the H\"older inequality
that
\begin{eqnarray*}
  \left|\int_0^y\varrho_0(v-v_0)d\xi\right|\leq\int_0^L(|\varrho_0v|+|
  \varrho_0v_0|)d\xi
  \leq 2\sqrt{2m_0E_0},
\end{eqnarray*}
where $m_0=\int_0^L\varrho_0 dy$, and, thus,
\begin{equation}
\label{EstB}
e^{-\frac2\mu\sqrt{2m_0E_0}}
\leq B(y,t)\leq e^{\frac2\mu\sqrt{2m_0E_0}}.
\end{equation}
Applying Proposition \ref{PROPBASIC}, using (\ref{EstB}), and integrating (\ref{N3}) over $(0,L)$, one deduces
\begin{eqnarray*}
  \ell_0H(t)&=&\int_0^LJHdy\leq e^{\frac2\mu\sqrt{2m_0E_0}}\int_0^LJHBdy \\
  &=&e^{\frac2\mu\sqrt{2m_0E_0}}\left[\ell_0+\frac1\mu\int_0^tH\left(\int_0^LBJ\pi dy\right)d\tau\right]\\
  &\leq&e^{\frac2\mu\sqrt{2m_0E_0}}\left(\ell_0+\frac{(\gamma-1)E_0}{\mu}e^{\frac2\mu\sqrt{2m_0E_0}}\int_0^tHd\tau\right),
\end{eqnarray*}
and, thus,
$$
H(t)\leq e^{\frac2\mu\sqrt{2m_0E_0}}\left(1+\frac{(\gamma-1)E_0}{\mu\ell_0}e^{\frac2\mu\sqrt{2m_0E_0}}\int_0^tHd\tau\right).
$$
Applying the Gronwall inequality to the above yields
\begin{equation*}
  H(t)\leq\exp\left\{\frac2\mu\sqrt{2m_0E_0}+\frac{(\gamma-1)E_0}{\mu\ell_0}e^{\frac4\mu\sqrt{2m_0E_0}}t\right\}.
\end{equation*}
With the aid of this and recalling $\pi\geq0$ and (\ref{EstB}), one obtains from (\ref{N1}) that
\begin{eqnarray*}
  J&=&H^{-1}B^{-1}J_0e^{\frac1\mu\int_0^t\pi d\tau}\geq H^{-1}B^{-1}\underline J\\
  &\geq&\underline J\exp\left\{-\frac4\mu\sqrt{2m_0E_0}-\frac{(\gamma-1)E_0}{\mu\ell_0}e^{\frac4\mu\sqrt{2m_0E_0}}t\right\},
\end{eqnarray*}
the conclusion follows.
\end{proof}

Before continuing the argument, let us introduce the key quantity of this paper, the effective viscous flux $G$, defined as
\begin{equation}
  G:=\mu\frac{v_y}{J}-\pi. \label{G}
\end{equation}
By some straightforward calculations, one can easily derive the equation for $G$ from (\ref{EQJ})--(\ref{EQpi}) as
\begin{equation}
\label{EQG}
   G_t-\frac\mu J \left(\frac{ G_y}{\varrho_0}\right)_y=-\gamma\frac{ v_y}{J}G.
\end{equation}
Moreover, noticing that $\varrho_0 v_t= G_y$, it is clear from the boundary condition of $v$, i.e., (\ref{BC}), that
\begin{equation}
   G_y|_{y=0,L}=0. \label{BCG}
\end{equation}

The next proposition concerning the estimate on $G$ is the key of proving the $H^1$ estimates on $(J, v,
\pi)$ later.

\begin{proposition}
\label{PropEstG}
The following estimate holds
$$
\sup_{0\leq t\leq T}\|G\|_2^2+\int_0^T\left(\|G\|_\infty^4+\left\|\frac{ G_y}{\sqrt{\varrho_0}}\right\|_2^2\right)dt\leq C\|G_0\|_2^2,
$$
where $G_0=\mu\frac{v_0'}{J_0}-\pi_0$ and $C$ depends only on $\gamma, \mu, \bar\varrho, \ell_0, \underline J, m_0, E_0,$ and $T$.
\end{proposition}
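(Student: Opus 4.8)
The plan is to derive an energy estimate for $G$ directly from its evolution equation \eqref{EQG}, exploiting the structure that makes $G$ the natural quantity to control. First I would multiply \eqref{EQG} by $G$ and integrate over $(0,L)$. Using the boundary condition \eqref{BCG} and integration by parts on the diffusion term, the identity should read
\begin{equation*}
  \frac12\frac{d}{dt}\int_0^L G^2\,dy + \mu\int_0^L \frac{(G_y)^2}{\varrho_0 J}\,dy = -\gamma\int_0^L \frac{v_y}{J}G^2\,dy.
\end{equation*}
Here the left side is clean because the $\frac{\mu}{J}(G_y/\varrho_0)_y$ term, after integration by parts, produces the good dissipation $\int (G_y)^2/(\varrho_0 J)$, which by the lower bound on $J$ from Proposition \ref{PROPEstJ} and the upper bound $\varrho_0\le\bar\varrho$ controls $\|G_y/\sqrt{\varrho_0}\|_2^2$. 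The main obstacle is the right-hand forcing term, since $v_y = \frac{J}{\mu}(G+\pi)$ and so $-\gamma\frac{v_y}{J}G^2 = -\frac\gamma\mu(G+\pi)G^2$; this is cubic (indeed quartic after expansion) in $G$ and not obviously absorbable.

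To handle the right-hand side I would rewrite $\frac{v_y}{J}=\frac1\mu(G+\pi)$ and split the forcing into $-\frac\gamma\mu\int G^3\,dy - \frac\gamma\mu\int \pi G^2\,dy$. The second piece is easier: since $\pi\ge0$ is bounded in $L^\infty$ only through the energy (and $\pi$ enters the definition of $G$), I would control $\int\pi G^2$ using $\|\pi\|_1\le (\gamma-1)E_0$ from the energy identity in Proposition \ref{PROPBASIC} together with $\|G\|_\infty^2$, giving a term like $C\|\pi\|_1\|G\|_\infty^2$. The cubic term $\int G^3\le \|G\|_\infty\|G\|_2^2$ is the genuinely dangerous one. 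The key trick, standard in this circle of ideas, is to estimate $\|G\|_\infty$ via the Gagliardo--Nirenberg/interpolation inequality: since $G_y/\sqrt{\varrho_0}\in L^2$ controls a weighted derivative, and using $\|G\|_\infty\le C\|G\|_2^{1/2}\|G_y\|_2^{1/2}+C\|G\|_2$ in one dimension, one converts the cubic and the $\pi G^2$ terms into a product where the $\|G_y\|_2$ (equivalently $\|G_y/\sqrt{\varrho_0}\|_2$, using $\varrho_0\le\bar\varrho$) appears to a power strictly less than $2$.

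Thus the strategy is: bound the right-hand side by $\varepsilon\|G_y/\sqrt{\varrho_0}\|_2^2 + C_\varepsilon(\text{lower-order in }\|G\|_2)$, absorb the $\varepsilon$-term into the dissipation on the left, and apply Gronwall. The output would be $\sup_t\|G\|_2^2+\int_0^T\|G_y/\sqrt{\varrho_0}\|_2^2\,dt\le C\|G_0\|_2^2$, and the $\int_0^T\|G\|_\infty^4\,dt$ bound then follows a posteriori by integrating the interpolation inequality $\|G\|_\infty^4\le C\|G\|_2^2\|G_y\|_2^2+C\|G\|_2^4$ in time against the two controlled quantities. The hard part will be organizing the nonlinear estimate so that every power of $\|G_y\|_2$ that appears is subquadratic, so that Young's inequality genuinely closes the argument; I expect the interpolation constants and the precise exponents (matching the eventual $\|G\|_\infty^4$ target) to require care, and the lower bound on $J$ from Proposition \ref{PROPEstJ}, though time-dependent, must be tracked so that the constant $C$ depends on $T$ only through that bound and the fixed data.
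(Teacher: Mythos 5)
There are two genuine gaps, and the second one is fatal to the strategy as you describe it.

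First, your opening energy identity is wrong. Multiplying \eqref{EQG} by $G$ and integrating by parts (using \eqref{BCG}) gives
\begin{equation*}
\frac12\frac{d}{dt}\|G\|_2^2+\mu\int_0^L\frac{(G_y)^2}{\varrho_0 J}\,dy
-\mu\int_0^L\frac{J_y\,G\,G_y}{\varrho_0 J^2}\,dy
=-\gamma\int_0^L\frac{v_y}{J}G^2\,dy,
\end{equation*}
because the derivative must fall on $G/J$, not on $G$ alone; the commutator term involving $J_y$ does not vanish. This term is not controllable at this stage: the $L^2$ bound on $J_y$ is only obtained later (Proposition \ref{PropJPiy}), and its proof \emph{uses} the present proposition; moreover the factor $1/\varrho_0$ cannot be bounded, since all constants here must be independent of the lower bound of the density. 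This is exactly why the paper multiplies \eqref{EQG} by $JG$ rather than $G$: the factor $J$ cancels the $1/J$ in front of the diffusion term, so no $J_y$ ever appears, and the only price is the extra term $\frac12\int_0^L J_tG^2\,dy=\frac12\int_0^L v_yG^2\,dy$ coming from \eqref{EQJ}, which merely changes the coefficient of the forcing to $(\frac12-\gamma)$.

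Second, your treatment of the forcing term does not close. Writing $v_y/J=(G+\pi)/\mu$ and estimating $\int_0^LG^3\,dy\le\|G\|_\infty\|G\|_2^2$ with Gagliardo--Nirenberg leads, after Young's inequality, to a remainder of the form $C\|G\|_2^{10/3}=C\big(\|G\|_2^2\big)^{5/3}$, which is superlinear in $\|G\|_2^2$. A Gronwall argument applied to $X'\le \varepsilon(\dots)+CX^{5/3}$ yields at best a bound on a small time interval (Riccati-type blow-up), and in no case the stated conclusion, which is \emph{linear} in $\|G_0\|_2^2$ with a constant depending only on the listed quantities and $T$. The idea you are missing is the paper's integration by parts in the forcing term itself:
\begin{equation*}
\int_0^Lv_yG^2\,dy=-2\int_0^LvGG_y\,dy,
\qquad\Big|\int_0^Lv_yG^2\,dy\Big|\le 2\Big\|\frac{G_y}{\sqrt{\varrho_0}}\Big\|_2\,\|\sqrt{\varrho_0}v\|_2\,\|G\|_\infty,
\end{equation*}
which trades one factor of $G$ for $\sqrt{\varrho_0}v$, a quantity bounded a priori by $\sqrt{2E_0}$ (Proposition \ref{PROPBASIC}). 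Combining this with the interpolation inequality for $\|G\|_\infty$ (written in terms of $\|\sqrt JG\|_2$ via Proposition \ref{PROPEstJ}) and Young's inequality gives $\varepsilon\|G_y/\sqrt{\varrho_0}\|_2^2+C\|\sqrt JG\|_2^2$, i.e.\ a bound \emph{linear} in $\|\sqrt JG\|_2^2$, so Gronwall closes globally and linearly in the data. Your remaining steps (absorbing the dissipation, and recovering $\int_0^T\|G\|_\infty^4\,dt$ a posteriori from the interpolation inequality) do match the paper, but they only work once the forcing is handled this way.
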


\begin{proof}
  Multiplying (\ref{EQG}) with $JG$ and recalling the boundary condition (\ref{BCG}),
  it follows from integration by parts and (\ref{EQJ}) that
  \begin{equation}
    \frac12\frac{d}{dt}\|\sqrt JG\|_2^2+\mu\left\|\frac{ G_y}{\sqrt{\varrho_0}}\right\|_2^2
    =\left(\frac12-\gamma\right)\int_0^Lv_yG^2dy. \label{N4}
  \end{equation}
  Integration by parts and the H\"older inequality yield
  \begin{eqnarray*}
    \left|\int_0^Lv_yG^2dy\right|&=&2\left|\int_0^LvGG_ydy\right|\leq 2\left\|\frac{G_y}{\sqrt{\varrho_0}}\right\|_2
    \|\sqrt{\varrho_0}v\|_2\|G\|_\infty.
  \end{eqnarray*}
  By the Gagliardo-Nirenberg inequality and applying Proposition \ref{PROPEstJ}, it follows
  \begin{eqnarray}
    \|G\|_\infty&\leq& C\|G\|_2^{\frac12}\|G\|_{H^1}^{\frac12}\leq C\left(\|G\|_2+\|G\|_2^{\frac12}\left\|\frac{G_y}{\sqrt{\varrho_0}}\right\|_2^{\frac12}\right)\nonumber\\
    &\leq&C\left(\|\sqrt JG\|_2+\|\sqrt JG\|_2^{\frac12}\left\|\frac{G_y}{\sqrt{\varrho_0}}\right\|_2^{\frac12}\right).
    \label{N3-1}
  \end{eqnarray}
  Combining the previous two inequalities, it follows from the Young inequality and Proposition \ref{PROPBASIC} that
  \begin{eqnarray*}
    \left|\int_0^Lv_yG^2dy\right|&\leq&C\left\|\frac{G_y}{\sqrt{\varrho_0}}\right\|_2
    \|\sqrt{\varrho_0}v\|_2\left(\|\sqrt JG\|_2+\|\sqrt JG\|_2^{\frac12}\left\|\frac{G_y}{\sqrt{\varrho_0}}\right\|_2^{\frac12}\right)\\
    &\leq&\varepsilon\left\|\frac{G_y}{\sqrt{\varrho_0}}\right\|_2^2+C_\varepsilon(E_0+E_0^2)\|\sqrt JG\|_2^2,
  \end{eqnarray*}
  for any positive $\varepsilon$. Substituting the above into (\ref{N4}) with suitably chosen $\varepsilon$, one obtains
  \begin{equation*}
   \frac{d}{dt}\|\sqrt JG\|_2^2+\mu\left\|\frac{G_y}{\sqrt{\varrho_0}}\right\|_2^2
   \leq C\|\sqrt JG\|_2^2,
  \end{equation*}
  which leads to the conclusion by applying the Gronwall inequality and simply using (\ref{N3-1}) and Proposition \ref{PROPEstJ}.
\end{proof}

The uniform upper bounds of $J, \pi$ can now be proved as in the next proposition.

\begin{proposition}\label{PropJPiInfty}
The following estimate holds
$$
\sup_{0\leq t\leq T}(\|\pi\|_\infty+\|J\|_\infty)\leq C(1+\bar J+\|\pi_0\|_\infty),
$$
for a positive constant $C$ depending only on $\gamma, \mu, \bar\varrho, \ell_0, \underline J, m_0, E_0, \|G_0\|_2$, and $T$.
\end{proposition}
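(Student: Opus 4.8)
The plan is to control $\|\pi\|_\infty$ and $\|J\|_\infty$ by exploiting the two representation formulas already established, together with the flux estimate from Proposition~\ref{PropEstG}. The key observation is that $\pi$ solves the transport-type equation (\ref{EQpi}), whose characteristics are trivial in the Lagrangian frame, so along each fixed $y$ one can integrate in $t$ and obtain an explicit solution formula for $\pi$. Rewriting (\ref{EQpi}) using $G = \mu v_y/J - \pi$ gives $v_y/J = (G+\pi)/\mu$, and substituting this into the transport equation will express the growth of $\pi$ in terms of $G$ and $\pi$ itself. First I would derive the ODE (in $t$, for fixed $y$)
\begin{equation*}
  \pi_t + \frac{\gamma}{\mu}(G+\pi)\pi = \frac{\gamma-1}{\mu}(G+\pi)^2,
\end{equation*}
and rearrange it into a form that is linear in $\pi$ with coefficients controlled by $G$ and $\|G\|_\infty$.

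The main technical device I expect to use is the integrating factor for this scalar ODE. Collecting the $\pi^2$ terms, the equation becomes $\pi_t = a(t)\pi + b(t) - \frac{1}{\mu}\pi^2$ (schematically), where $a$ and $b$ are built from $G$ and $G^2$. The crucial sign is that the quadratic term $-\frac1\mu\pi^2$ has a favorable (dissipative) sign precisely because $\gamma>1$ ensures the $\pi^2$ coefficient coming from $-\frac{\gamma}{\mu}\pi^2$ dominates the $+\frac{\gamma-1}{\mu}\pi^2$ produced on the right. Once the net quadratic coefficient is shown to be negative, the $\pi^2$ term can be discarded to obtain a differential inequality $\pi_t \le a(t)\pi + b(t)$, and Gronwall in $t$ then yields
\begin{equation*}
  \pi(y,t)\le \Big(\pi_0(y)+\int_0^t b(\tau)\,d\tau\Big)\exp\Big(\int_0^t a(\tau)\,d\tau\Big).
\end{equation*}
The coefficients integrate against $\|G\|_\infty$ and $\|G\|_\infty^2$ in time, and here is where Proposition~\ref{PropEstG} enters decisively: it furnishes $\int_0^T\|G\|_\infty^4\,dt<\infty$, which by H\"older bounds both $\int_0^T\|G\|_\infty\,dt$ and $\int_0^T\|G\|_\infty^2\,dt$ on the finite interval $[0,T]$. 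This gives $\sup_{0\le t\le T}\|\pi\|_\infty \le C(1+\|\pi_0\|_\infty)$ with $C$ depending only on the listed quantities.

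With $\|\pi\|_\infty$ in hand, the bound on $\|J\|_\infty$ follows from the representation (\ref{N3}), namely $JHB = J_0+\frac1\mu\int_0^t HBJ\pi\,d\tau$. Using the two-sided control (\ref{EstB}) on $B$, the lower bound on $H$ available from the Gronwall estimate in the proof of Proposition~\ref{PROPEstJ}, and the just-obtained $L^\infty$ bound on $\pi$, this identity becomes a Gronwall inequality for $JHB$ (equivalently for $J$) pointwise in $y$. Integrating in $\tau$ and applying Gronwall yields $\sup_{0\le t\le T}\|J\|_\infty \le C(1+\bar J+\|\pi_0\|_\infty)$.

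The step I expect to be the main obstacle is establishing the correct sign of the net $\pi^2$ coefficient in the scalar ODE and keeping careful track of the constants so that the dissipative quadratic term genuinely absorbs (or at least does not worsen) the contribution of the $+(\gamma-1)G^2/\mu$ forcing; if the algebra of the $\gamma$-dependent coefficients is handled loosely one could miss the cancellation and be left with a quadratic growth term that Gronwall cannot close. Once that sign is pinned down, the rest is a double application of Gronwall, first for $\pi$ along characteristics and then for $J$ through (\ref{N3}), with the finite-time integrability of $\|G\|_\infty$ supplied entirely by Proposition~\ref{PropEstG}.
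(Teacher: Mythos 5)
Your treatment of $\pi$ is correct and is essentially the paper's own argument: substituting $v_y/J=(G+\pi)/\mu$ into (\ref{EQpi}) gives exactly the paper's equation (\ref{EQpi-v0}), namely $\pi_t+\frac1\mu\pi^2=\frac{\gamma-1}{\mu}G^2+\frac{\gamma-2}{\mu}G\pi$, and the sign you worry about is unambiguous: the net coefficient of $\pi^2$ is $\frac{\gamma-1}{\mu}-\frac{\gamma}{\mu}=-\frac1\mu<0$ for every $\gamma$. The paper then completes the square, obtaining (\ref{EQpi-v1}), $\pi_t+\frac1\mu\left(\pi-\frac{\gamma-2}{2}G\right)^2=\frac{\gamma^2}{4\mu}G^2$, so that $\pi_t\le\frac{\gamma^2}{4\mu}G^2$ and one time integration together with $\int_0^T\|G\|_\infty^4\,dt\le C$ (Proposition \ref{PropEstG} plus H\"older) finishes; your variant, discarding $-\frac1\mu\pi^2$ and applying Gronwall with $a=\frac{\gamma-2}{\mu}G$, $b=\frac{\gamma-1}{\mu}G^2$, is equally valid, since $\int_0^T\|G\|_\infty\,dt$ and $\int_0^T\|G\|_\infty^2\,dt$ are finite by the same proposition.

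The gap is in your bound for $\|J\|_\infty$. You Gronwall the identity (\ref{N3}) to bound $f:=JHB$ and then want to divide by $HB$; for that you need \emph{lower} bounds on both $B$ and $H$. The lower bound on $B$ is indeed (\ref{EstB}), but the ingredient you cite for $H$ does not exist: the Gronwall step in the proof of Proposition \ref{PROPEstJ} produces an \emph{upper} bound on $H$ (that is what is needed there, to bound $J$ from \emph{below}), and an upper bound on $H$ is useless for bounding $J=f/(HB)$ from above. As written, your argument does not close: if $H$ were allowed to degenerate to $0$, then $J$ could blow up while $JHB$ stays bounded. The gap is fixable by a short extra argument: integrating the pointwise identity (\ref{N1}) in $y$ over $(0,L)$, and using $\int_0^LJ\,dy=\ell_0$ (Proposition \ref{PROPBASIC}), $J_0\ge\underline J$, $\pi\ge0$, and (\ref{EstB}), one gets
$$
\ell_0=\frac1{H(t)}\int_0^L\frac{J_0}{B}\,e^{\frac1\mu\int_0^t\pi\,d\tau}\,dy
\ge\frac{L\,\underline J}{H(t)}\,e^{-\frac2\mu\sqrt{2m_0E_0}},
\quad\mbox{i.e.,}\quad
H(t)\ge\frac{L\,\underline J}{\ell_0}\,e^{-\frac2\mu\sqrt{2m_0E_0}},
$$
after which your plan goes through. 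You should note, however, that the paper bypasses $H$ and $B$ entirely at this stage: since $v_y=\frac J\mu(G+\pi)$, equation (\ref{EQJ}) reads $J_t=\frac J\mu(G+\pi)$, and Gronwall together with the just-proved bound on $\|\pi\|_\infty$ and the bound on $\int_0^T\|G\|_\infty\,dt$ from Proposition \ref{PropEstG} immediately gives $\sup_{0\le t\le T}\|J\|_\infty\le C\bar J$; this is both shorter and avoids the missing lower bound on $H$ altogether.
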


\begin{proof}
Noticing that $v_y=\frac J\mu(G+\pi)$, one can rewrite (\ref{EQpi}) as
\begin{equation}
 \pi_t+\frac{\pi^2}{\mu}=\frac{\gamma-1}{\mu}G^2+\frac{\gamma-2}{\mu}G\pi,\label{EQpi-v0}
\end{equation}
from which one can further derive
\begin{equation}
   \pi_t+\frac1\mu\left(\pi-\frac{\gamma-2}{2}G\right)^2=\frac{\gamma^2}{4\mu}G^2. \label{EQpi-v1}
\end{equation}
  The estimate for $\pi$ follows straightforwardly from integrating (\ref{EQpi-v1}) with respect to $t$ and applying Proposition \ref{PropEstG}. As for the estimate for $J$, noticing that (\ref{EQJ}) can be rewritten in terms of $G$ and $\pi$ as
  $ J_t=\frac J\mu(G+\pi),$ the conclusion follows from the Gronwall inequality by
  Proposition \ref{PROPEstJ}, Proposition \ref{PropEstG}, and the estimate for $\pi$ just proved.
\end{proof}

A priori $L^\infty(0,T; H^1)$ estimate for $(J,\pi)$ is given in the next proposition.

\begin{proposition}
\label{PropJPiy}
The following estimate holds
\begin{equation*}
  \sup_{0\leq t\leq T}(\|J_y\|_2+\|\pi_y\|_2)\leq C(1+\|J_0'\|_2+\|\pi_0'\|_2),
\end{equation*}
for a positive constant $C$ depending only on $\gamma, \mu,\bar\varrho,  \ell_0, \underline J, \bar J, m_0, \|\pi_0\|_\infty,
E_0, \|G_0\|_2$, and $T$.
\end{proposition}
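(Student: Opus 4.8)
The plan is to derive, separately, differential inequalities for $\|\pi_y\|_2^2$ and $\|J_y\|_2^2$ and to close each by Gronwall's inequality, feeding in the estimates already available: the lower bound on $J$ (Proposition \ref{PROPEstJ}), the flux estimate $\sup_t\|G\|_2^2+\int_0^T(\|G\|_\infty^4+\|G_y/\sqrt{\varrho_0}\|_2^2)\,dt\le C\|G_0\|_2^2$ (Proposition \ref{PropEstG}), and the uniform upper bounds $\|\pi\|_\infty+\|J\|_\infty\le C$ (Proposition \ref{PropJPiInfty}). The device that makes the time-integrability work out is the elementary observation that, since $\varrho_0\le\bar\varrho$,
$$
\|G_y\|_2^2=\int_0^LG_y^2\,dy\le\bar\varrho\int_0^L\frac{G_y^2}{\varrho_0}\,dy=\bar\varrho\left\|\frac{G_y}{\sqrt{\varrho_0}}\right\|_2^2,
$$
so that $\int_0^T\|G_y\|_2^2\,dt\le\bar\varrho\,C\|G_0\|_2^2$, a bound depending on $\bar\varrho$ but not on the lower bound $\underline\varrho$. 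Since the $\pi_y$ estimate will turn out to decouple from $J_y$, I would treat $\pi_y$ first and then $J_y$.

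For $\pi_y$, I would start from the reformulated pressure equation (\ref{EQpi-v0}), differentiate it in $y$, multiply by $\pi_y$, and integrate over $(0,L)$. Because $\pi\ge0$, the term $\frac{2}{\mu}\int_0^L\pi\pi_y^2\,dy$ coming from $\partial_y(\pi^2)$ is nonnegative and can be discarded to the good side, leaving
$$
\frac12\frac{d}{dt}\|\pi_y\|_2^2\le C\|G\|_\infty\|G_y\|_2\|\pi_y\|_2+C\|\pi\|_\infty\|G_y\|_2\|\pi_y\|_2+C\|G\|_\infty\|\pi_y\|_2^2.
$$
Using $\|\pi\|_\infty\le C$ and Young's inequality with the grouping $\|G\|_\infty\|G_y\|_2\|\pi_y\|_2\le\|G\|_\infty^2\|\pi_y\|_2^2+C\|G_y\|_2^2$, this reduces to
$$
\frac{d}{dt}\|\pi_y\|_2^2\le C(1+\|G\|_\infty+\|G\|_\infty^2)\|\pi_y\|_2^2+C\|G_y\|_2^2.
$$
Here the coefficient $1+\|G\|_\infty+\|G\|_\infty^2$ lies in $L^1(0,T)$ (as $\|G\|_\infty\in L^4(0,T)\subset L^2(0,T)$ by Proposition \ref{PropEstG}) and the forcing $\|G_y\|_2^2$ lies in $L^1(0,T)$ by the observation above, so Gronwall bounds $\sup_t\|\pi_y\|_2^2$ in terms of $\|\pi_0'\|_2^2$.

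For $J_y$, I would rewrite (\ref{EQJ}) as $J_t=\frac{J}{\mu}(G+\pi)$, differentiate in $y$ to get $J_{ty}=\frac{J_y}{\mu}(G+\pi)+\frac{J}{\mu}(G_y+\pi_y)$, multiply by $J_y$, and integrate. With $\|J\|_\infty,\|\pi\|_\infty\le C$ and Young's inequality this gives
$$
\frac{d}{dt}\|J_y\|_2^2\le C(1+\|G\|_\infty)\|J_y\|_2^2+C(\|G_y\|_2^2+\|\pi_y\|_2^2).
$$
The coefficient $1+\|G\|_\infty$ is in $L^1(0,T)$ and the forcing is in $L^1(0,T)$, the term $\|\pi_y\|_2^2$ being now bounded (hence integrable) by the previous step; a final Gronwall application bounds $\sup_t\|J_y\|_2^2$ in terms of $\|J_0'\|_2^2$ and $\|\pi_0'\|_2^2$.

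The main obstacle is not any single estimate but the bookkeeping of time-integrability that legitimizes both Gronwall arguments: one must organize Young's inequality so that every factor hitting $\|\pi_y\|_2^2$ or $\|J_y\|_2^2$ lands in $L^1(0,T)$ and every remaining forcing term lands in $L^1(0,T)$, which forces the specific pairing of $\|G\|_\infty$ (only $L^4$ in time) against $\|G_y\|_2$ (only $L^2$ in time). Equally important is to carry out every step so that the resulting constant depends on $\bar\varrho$ but never on $\underline\varrho$, since this $\underline\varrho$-independence is precisely what will allow passing to the vacuum limit in the next section.
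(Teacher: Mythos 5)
Your proposal is correct, and the constant-tracking (in particular that everything depends on $\bar\varrho$ through $\|G_y\|_2^2\le\bar\varrho\|G_y/\sqrt{\varrho_0}\|_2^2$ but never on a lower bound of $\varrho_0$) is exactly what is needed. Your treatment of $\pi_y$ is essentially the paper's own proof: the paper also differentiates (\ref{EQpi-v0}), tests with $\pi_y$, keeps the nonnegative term $\frac{2}{\mu}\int_0^L\pi|\pi_y|^2dy$ on the good side, and closes with Gronwall using $\|G\|_\infty^2\in L^2(0,T)\subset L^1(0,T)$ and $\int_0^T\|G_y\|_2^2\,dt\le C$. Where you diverge is the $J_y$ estimate. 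You run a second energy/Gronwall argument on $J_{ty}=\frac{J_y}{\mu}(G+\pi)+\frac{J}{\mu}(G_y+\pi_y)$, which works, but the paper instead exploits the logarithmic structure: since $(\log J)_{yt}=\left(\frac{v_y}{J}\right)_y=\frac1\mu(G+\pi)_y$, one can integrate directly in time to get
\begin{equation*}
\sup_{0\leq t\leq T}\|(\log J)_y\|_2\leq\frac{\|J_0'\|_2}{\underline J}+\frac1\mu\int_0^T(\|G_y\|_2+\|\pi_y\|_2)\,d\tau,
\end{equation*}
and then recover $\|J_y\|_2=\|J(\log J)_y\|_2\le\|J\|_\infty\|(\log J)_y\|_2$ via Proposition \ref{PropJPiInfty}. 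The paper's route needs no Gronwall for $J_y$ and only $L^1$-in-time control of $\|G_y\|_2+\|\pi_y\|_2$ (no integrability of $\|G\|_\infty$ enters this half at all), so it is slightly cleaner and makes the constant's structure transparent; your route is more mechanical and generalizes to situations where no exact logarithmic primitive is available, at the mild cost of having to verify that the coefficient $1+\|G\|_\infty$ is integrable. Both yield the stated bound with the stated dependencies.
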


\begin{proof}
  Differentiating (\ref{EQpi-v0}) with respect to $y$ gives
  $$
  \partial_t\pi_y+\frac2\mu\pi\pi_y=\frac{2(\gamma-1)}{\mu}GG_y+\frac{\gamma-2}{\mu}(\pi_y G+\pi G_y).
  $$
  Multiplying the above equation with $\pi_y$ and integrating over $(0,L)$, one deduces
  \begin{eqnarray*}
    &&\frac12\frac{d}{dt}\|\pi_y\|_2^2+\frac2\mu\int_0^L\pi|\pi_y|^2dy\\
    &=&\frac{2(\gamma-1)}{\mu}\int_0^LGG_y\pi_ydy+\frac{\gamma-2}{\mu}
    \int_0^L(G|\pi_y|^2+\pi G_y\pi_y)dy \\
    &\leq&C\|G_y\|_2^2+C(\|G\|_\infty^2+1+\|\pi\|_\infty^2)\|\pi_y\|_2^2,
  \end{eqnarray*}
  and, thus, by the Gronwall inequality, and applying Proposition \ref{PropEstG} and Proposition \ref{PropJPiInfty}, one gets
  \begin{eqnarray*}
    \sup_{0\leq t\leq T}\|\pi_y\|_2^2&\leq& e^{C\int_0^T(1+\|G\|_\infty^2+\|\pi\|_\infty^2)dt}\left(\|\pi_0'\|_2^2+C\int_0^T\|G_y\|_2^2dt\right)\\
    &\leq& C(1+\|\pi_0'\|_2^2).
  \end{eqnarray*}
  Note that
  \begin{eqnarray*}
    (\log J)_{yt}=\left(\frac{J_t}{J}\right)_y=\left(\frac{v_y}{J}\right)_y
    =\frac1\mu(G+\pi)_y.
  \end{eqnarray*}
  Therefore, by Proposition \ref{PropEstG} and the estimate just obtained for $\|\pi_y\|_2$, it follows that
  \begin{eqnarray*}
    \sup_{0\leq t\leq T}\|(\log J)_y\|_2&=&\sup_{0\leq t\leq T}\left\|(\log J_0)'+\int_0^t(\log J)_{yt}d\tau\right\|_2\\
    &\leq&\left\|\frac{J_0'}{J_0}\right\|_2+\int_0^T\|(\log J)_{yt}\|_2d\tau\\
    &\leq&\frac{\|J_0'\|_2}{\underline J}+\frac1\mu\int_0^T(\|G_y\|_2+\|\pi_y\|_2)d\tau\\
    &\leq& C(1+\|J_0'\|_2),
  \end{eqnarray*}
  and further by Proposition \ref{PropJPiInfty} that
  \begin{equation*}
    \sup_{0\leq t\leq T}\|J_y\|_2=\sup_{0\leq t\leq T}\|J(\log J)_y\|_2\leq C(1+\|J_0'\|_2),
  \end{equation*}
  proving the conclusion.
\end{proof}

\begin{corollary}
  \label{CorVy}
It holds that
$$
\sup_{0\leq t\leq T}\|(J_t,v_y)\|_2^2+\int_0^T\left(\|(\sqrt{\varrho_0}v_t,v_{yy},J_{yt})\|_2^2+
\|\pi_t\|_\infty^2+\|\pi_{yt}\|_2^{\frac43}\right)dt\leq C,
$$
for a positive constant $C$ depending only on $\gamma, \mu, \bar\varrho, \ell_0, \underline J, \bar J, m_0, \|\pi_0\|_\infty, E_0$, $\|G_0\|_2$, $\|J_0'\|_2$, $\|\pi_0'\|_2$, and $T$.
\end{corollary}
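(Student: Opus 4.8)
My plan is to produce every quantity on the left-hand side from the algebraic identities $J_t=v_y$, $v_y=\frac{J}{\mu}(G+\pi)$, $\varrho_0v_t=G_y$, and $J_{yt}=v_{yy}$, and then to bound the resulting expressions using only the estimates already established in Propositions \ref{PROPBASIC}--\ref{PropJPiy}. First I would record the consequences I will use: since $\varrho_0\leq\bar\varrho$, Proposition \ref{PropEstG} gives $\int_0^T\|G_y\|_2^2\,dt\leq\bar\varrho\int_0^T\|G_y/\sqrt{\varrho_0}\|_2^2\,dt\leq C$, together with $\int_0^T\|G\|_\infty^4\,dt\leq C$ and $\sup_t\|G\|_2\leq C$, while Propositions \ref{PropJPiInfty} and \ref{PropJPiy} supply the uniform bounds $\sup_t(\|J\|_\infty+\|\pi\|_\infty+\|J_y\|_2+\|\pi_y\|_2)\leq C$. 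I emphasize that none of these, nor the steps below, invokes a lower bound on $\varrho_0$, which is why the constant $C$ in the statement is independent of $\underline\varrho$.

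The pointwise-in-time bound and the $\sqrt{\varrho_0}v_t$, $v_{yy}$, $J_{yt}$ bounds are then direct substitutions. From $v_y=\frac{J}{\mu}(G+\pi)$ and $J_t=v_y$ I would get $\|(J_t,v_y)\|_2\leq\frac{1}{\mu}\|J\|_\infty(\|G\|_2+\|\pi\|_2)\leq C$. Since $\varrho_0v_t=G_y$, I have $\sqrt{\varrho_0}v_t=G_y/\sqrt{\varrho_0}$, so $\int_0^T\|\sqrt{\varrho_0}v_t\|_2^2\,dt=\int_0^T\|G_y/\sqrt{\varrho_0}\|_2^2\,dt\leq C$ immediately by Proposition \ref{PropEstG}. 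Differentiating $v_y=\frac{J}{\mu}(G+\pi)$ in $y$ gives $\mu v_{yy}=J_y(G+\pi)+J(G_y+\pi_y)$, so that
$$
\|v_{yy}\|_2\leq\frac{1}{\mu}\left[\|J_y\|_2(\|G\|_\infty+\|\pi\|_\infty)+\|J\|_\infty(\|G_y\|_2+\|\pi_y\|_2)\right];
$$
squaring, integrating, and using $\int_0^T(\|G\|_\infty^2+\|G_y\|_2^2)\,dt\leq C$ with the uniform bounds yields $\int_0^T\|v_{yy}\|_2^2\,dt\leq C$, and $J_{yt}=v_{yy}$ gives the same bound for $J_{yt}$. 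For $\pi_t$ I would read off from (\ref{EQpi-v0}) that $\mu\pi_t=-\pi^2+(\gamma-1)G^2+(\gamma-2)G\pi$, whence $\|\pi_t\|_\infty^2\leq C(\|\pi\|_\infty^4+\|G\|_\infty^4)$, and $\int_0^T\|\pi_t\|_\infty^2\,dt\leq C$ follows from $\int_0^T\|G\|_\infty^4\,dt\leq C$ and the uniform bound on $\|\pi\|_\infty$.

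The only delicate term is $\pi_{yt}$, and I expect it to be the one point of the corollary that requires care; it is precisely this term that forces the exponent $\frac43$. Differentiating (\ref{EQpi-v0}) in $y$ and solving for $\pi_{yt}$ produces, up to fixed constants, the four terms $\pi\pi_y$, $GG_y$, $G\pi_y$, and $\pi G_y$. Three of these are harmless on the finite interval $(0,T)$: $\|\pi\pi_y\|_2\leq\|\pi\|_\infty\|\pi_y\|_2\leq C$ is uniform in $t$, while $\|G\pi_y\|_2\leq C\|G\|_\infty$ lies in $L^4(0,T)$ and $\|\pi G_y\|_2\leq C\|G_y\|_2$ lies in $L^2(0,T)$, both of which embed into $L^{4/3}(0,T)$. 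The binding term is $GG_y$: because $\|G\|_\infty\in L^4(0,T)$ and $\|G_y\|_2\in L^2(0,T)$, Hölder with $\frac14+\frac12=\frac34$ shows $\|GG_y\|_2\in L^{4/3}(0,T)$ and no better, so that
$$
\int_0^T\|\pi_{yt}\|_2^{4/3}\,dt\leq C\int_0^T\left(\|G\|_\infty\|G_y\|_2\right)^{4/3}dt+C\leq C\left(\int_0^T\|G\|_\infty^4\,dt\right)^{1/3}\left(\int_0^T\|G_y\|_2^2\,dt\right)^{2/3}+C\leq C.
$$
All remaining bounds are mechanical substitutions into the identities relating $(J,v,\pi)$ to $(G,\pi)$; the exponent $\frac43$ is dictated exactly by the time-integrability of $G$ and $G_y$ coming from Proposition \ref{PropEstG}, and is the best attainable in the absence of a time-uniform bound on $\|G\|_\infty$.
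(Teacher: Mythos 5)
Your proposal is correct and follows essentially the same route as the paper: both rest on the identities $v_y=\frac J\mu(G+\pi)$, $\sqrt{\varrho_0}v_t=\frac{G_y}{\sqrt{\varrho_0}}$, $\mu v_{yy}=J_y(G+\pi)+J(G_y+\pi_y)$, $J_t=v_y$, $J_{yt}=v_{yy}$, and the $y$-differentiated form of (\ref{EQpi-v0}), combined with Propositions \ref{PropEstG}--\ref{PropJPiy} and the same H\"older splitting $L^4_t\times L^2_t\to L^{4/3}_t$ for the $GG_y$ term. The only cosmetic difference is that the paper bounds $\|\pi_t\|_\infty$ in $L^4(0,T)$ via the completed-square form (\ref{EQpi-v1}) while you prove the stated $L^2(0,T)$ bound directly from (\ref{EQpi-v0}); both suffice.
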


\begin{proof}
The estimates on $\sup_{0\leq t\leq T}\|v_y\|_2^2$ and $\int_0^T\|\sqrt{\varrho_0}v_t\|_2^2dt$ follow from Propositions \ref{PropEstG} and \ref{PropJPiInfty} by noticing that $v_y=\frac J\mu(G+\pi)$ and $\sqrt{\varrho_0}v_t=\frac{G_y}{\sqrt{\varrho_0}}$.
As for the estimate of $v_{yy}$, noticing that
$$
v_{yy}=\left(J\frac{v_y}{J}\right)_y=J\left(\frac{v_y}{J}\right)_y+\frac{v_y}{J}J_y=\frac J\mu(G_y+\pi_y)+\frac{J_y}{\mu}(G+\pi),
$$
it follows from Propositions \ref{PropEstG}--\ref{PropJPiy} that
\begin{equation*}
  \int_0^T\|v_{yy}\|_2^2dt\leq C\int_0^T[\|G_y\|_2^2+\|\pi_y\|_2^2+\|J_y\|_2^2(\|G\|_\infty^2+\|\pi\|_\infty^2)]dt\leq C.
\end{equation*}
The estimate for $J_t$ follows directly from (\ref{EQJ}) and the estimates obtained. By Propositions \ref{PropEstG}--\ref{PropJPiy}, it follows from (\ref{EQpi-v1}) that
\begin{eqnarray*}
  &&\int_0^T\|\pi_t\|_\infty^4dt\leq C\int_0^T(\|G\|_\infty^4+\|\pi\|_\infty^4)dt\leq C,
\end{eqnarray*}
and
\begin{eqnarray*}
  &&\int_0^T\|\pi_{yt}\|_2^{\frac43}dt\leq C\int_0^T\big(\|\pi\|_\infty\|\pi_y\|_2+\|G\|_\infty\|G_y\|_2\big)^\frac43dt \\
  &\leq& C\left(\int_0^T(\|\pi\|_\infty^4+\|G\|_\infty^4)dt\right)^{\frac13}\left(\int_0^T(\|\pi_y\|_2^2+\|G_y\|_2^2)dt\right)^{\frac23}
  \leq C.
\end{eqnarray*}
This completes the proof.
\end{proof}

The following $t$-weighted estimates will be used in the compactness arguments in the passage of taking limit from the non-vacuum case to the vacuum case.

\begin{proposition}
  \label{PropWei}
  The following estimate holds
  $$
  \int_0^Tt\|v_{yt}\|_2^2dt\leq C,
  $$
  for a positive constant $C$ depending only on $\gamma, \mu, \bar\varrho, \ell_0, \underline J, \bar J, m_0, \|\pi_0\|_\infty, E_0$, $\|G_0\|_2$, $\|J_0'\|_2$, $\|\pi_0'\|_2$, and $T$.
\end{proposition}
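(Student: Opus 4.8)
The plan is to differentiate the momentum equation (\ref{EQv}) in time, test the result against $tv_t$, and use the weight $t$ to avoid any appeal to $v_t|_{t=0}$, which is not controlled under the mere $H^1$ hypothesis on the initial data. Differentiating (\ref{EQv}) in $t$ and using $J_t=v_y$ from (\ref{EQJ}) to write $\left(\frac{v_y}{J}\right)_t=\frac{v_{yt}}{J}-\frac{v_y^2}{J^2}$, one gets
\[
\varrho_0 v_{tt}-\mu\left(\frac{v_{yt}}{J}-\frac{v_y^2}{J^2}\right)_y+\pi_{yt}=0.
\]
Multiplying by $tv_t$, integrating over $(0,L)$, and integrating by parts in the last two terms (the boundary contributions vanish because $v(0,t)=v(L,t)=0$ forces $v_t|_{y=0,L}=0$), I obtain
\[
\frac{t}{2}\frac{d}{dt}\|\sqrt{\varrho_0}v_t\|_2^2+\mu t\int_0^L\frac{v_{yt}^2}{J}dy=\mu t\int_0^L\frac{v_y^2v_{yt}}{J^2}dy+t\int_0^L\pi_tv_{yt}dy.
\]
Rewriting the first term as $\frac12\frac{d}{dt}\big(t\|\sqrt{\varrho_0}v_t\|_2^2\big)-\frac12\|\sqrt{\varrho_0}v_t\|_2^2$ produces the key structure: after integrating in time, the leftover $\frac12\|\sqrt{\varrho_0}v_t\|_2^2$ is integrable by Corollary \ref{CorVy}, while the $t=0$ endpoint of $t\|\sqrt{\varrho_0}v_t\|_2^2$ is simply zero, so no bound on $v_t|_{t=0}$ is needed.

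Next I would estimate the right-hand side by Young's inequality, splitting off a multiple of the good term each time so it can be absorbed on the left:
\[
\left|\mu t\int_0^L\frac{v_y^2v_{yt}}{J^2}dy\right|\le\frac{\mu t}{4}\int_0^L\frac{v_{yt}^2}{J}dy+Ct\int_0^L\frac{v_y^4}{J^3}dy,
\]
\[
\left|t\int_0^L\pi_tv_{yt}dy\right|\le\frac{\mu t}{4}\int_0^L\frac{v_{yt}^2}{J}dy+Ct\int_0^LJ\pi_t^2dy.
\]
The surviving forcing terms are controlled by estimates already in hand. Since $J$ is bounded above and below on $[0,T]$ by Propositions \ref{PROPEstJ} and \ref{PropJPiInfty}, the quartic term reduces to $\int_0^T\|v_y\|_4^4\,dt$; the Gagliardo--Nirenberg bound $\|v_y\|_4^4\le C\|v_y\|_2^3\|v_y\|_{H^1}$, together with $\sup_t\|v_y\|_2\le C$ and $\int_0^T\|v_{yy}\|_2^2dt\le C$ from Corollary \ref{CorVy}, makes this finite. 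Likewise $\int_0^T\!\!\int_0^LJ\pi_t^2\,dy\,dt\le C\int_0^T\|\pi_t\|_\infty^2dt\le C$ by Corollary \ref{CorVy}. Integrating the resulting differential inequality over $(0,T)$ and using the lower bound of $J$ to convert $\int_0^L\frac{v_{yt}^2}{J}dy$ into a constant multiple of $\|v_{yt}\|_2^2$ then yields $\int_0^Tt\|v_{yt}\|_2^2dt\le C$.

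I expect two points to require the most care. First is the rigorous justification of differentiating (\ref{EQv}) in time: at the regularity level of Proposition \ref{PropLocal} the quantity $v_{tt}$ is not a priori available, so the computation should be read as an a priori estimate to be carried out on a smoother approximating sequence (or via difference quotients in $t$) and then passed to the limit. Second is the nonlinear commutator term $\int_0^L\frac{v_y^2v_{yt}}{J^2}dy$ arising from $\left(\frac{v_y}{J}\right)_t$; its control is precisely where the second-order bound $v\in L^2(0,T;H^2)$ of Corollary \ref{CorVy} enters, and it is the genuinely new estimate beyond the basic energy balance. The role of the weight $t$ throughout is to dispense with $v_t|_{t=0}$, which cannot be bounded for initial data lying only in $H^1$.
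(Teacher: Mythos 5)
Your proof is correct, and every bound you invoke is available at this point in the paper: Corollary \ref{CorVy} supplies $\int_0^T\big(\|\sqrt{\varrho_0}v_t\|_2^2+\|v_{yy}\|_2^2+\|\pi_t\|_\infty^2\big)dt\leq C$, and Propositions \ref{PROPEstJ} and \ref{PropJPiInfty} give the two-sided bounds on $J$. Your route differs from the paper's in a genuine way, even though the two energy identities coincide under the hood. The paper never differentiates the momentum equation in time: it multiplies the effective-viscous-flux equation (\ref{EQG}) by $tJG_t$, integrates by parts using the Neumann condition (\ref{BCG}), obtains $\sup_{0\leq t\leq T}\big(t\|G_y/\sqrt{\varrho_0}\|_2^2\big)+\int_0^Tt\|\sqrt JG_t\|_2^2dt\leq C$, and then recovers $v_{yt}=\frac1\mu(JG_t+\gamma v_yG)$ algebraically. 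Since $\varrho_0v_t=G_y$, your identity (time-differentiated (\ref{EQv}) tested against $tv_t$, boundary terms killed by $v_t|_{y=0,L}=0$) is exactly the paper's identity rewritten in the variables $(v_y,\pi)$ rather than $G$; the real divergence is in how the right-hand side is estimated. The paper keeps the single nonlinear term $-\gamma\int_0^Lv_yGG_t\,dy$ and bounds it by $\|G\|_\infty^2\|v_y\|_2^2$ using Proposition \ref{PropEstG}, whereas you split into the commutator term and the $\pi_t$ term and control them by $\int_0^T\|v_y\|_4^4\,dt$ (via Gagliardo--Nirenberg and $v\in L^2(0,T;H^2)$) and $\int_0^T\|\pi_t\|_\infty^2\,dt$ (Corollary \ref{CorVy}). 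The paper's formulation buys economy — one forcing term, no appearance of $\pi_t$ — and stays inside the $G$-machinery that drives the whole section; yours is more self-contained at this step, needing neither $G$ nor (\ref{BCG}), at the cost of two forcing terms and reliance on the $\pi_t$ bound. Both computations are equally formal ($G_t$, respectively $v_{tt}$, lies outside the regularity class of Proposition \ref{PropLocal}), and you rightly flag that the estimate should be run on difference quotients or a smoother approximation; the paper is silent on this point. One small slip: to convert $\int_0^L\frac{v_{yt}^2}{J}dy$ into a multiple of $\|v_{yt}\|_2^2$ you need the \emph{upper} bound of $J$ from Proposition \ref{PropJPiInfty} (so that $1/J\geq1/\|J\|_\infty$); the \emph{lower} bound of $J$ is what controls $\int_0^L\frac{v_y^4}{J^3}dy$. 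Both bounds are in hand and you cite both, so nothing breaks.
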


\begin{proof}
  Multiplying (\ref{EQG}) with $JG_t$, then integrating by parts yields
  \begin{eqnarray*}
    \frac\mu2\frac{d}{dt}\left\|\frac{G_y}{\sqrt{\varrho_0}}\right\|_2^2+\|\sqrt JG_t\|_2^2=-\gamma\int_0^Lv_yGG_tdy\\
    \leq\frac12\|\sqrt JG_t\|_2^2+C\|G\|_\infty^2\|v_y\|_2^2,
  \end{eqnarray*}
  which, multiplied with $t$, gives
  \begin{eqnarray*}
    \mu\frac{d}{dt}\left(t\left\|\frac{G_y}{\sqrt{\varrho_0}}\right\|_2^2\right)+t\|\sqrt JG_t\|_2^2\leq\mu\left\|\frac{G_y}{\sqrt{\varrho_0}}\right\|_2^2+Ct\|G\|_\infty^2\|v_y\|_2^2.
  \end{eqnarray*}
  Integrating the above with respect to $t$, and using Proposition \ref{PropEstG} and Corollary \ref{CorVy} yield
  \begin{equation}
  \label{N5}
    \sup_{0\leq t\leq T}\left(t\left\|\frac{G_y}{\sqrt{\varrho_0}}\right\|_2^2\right)+\int_0^Tt\|\sqrt JG_t\|_2^2dt\leq C.
  \end{equation}
  Recalling the expression of $G$, by direct calculations, and using (\ref{EQpi}), one deduces
  \begin{eqnarray*}
    G_t&=&\mu\left(\frac{v_{yt}}{J}-\frac{J_t}{J^2}v_y\right)-\pi_t\\
   &=&\mu\frac{v_{yt}}{J}-\mu\left(\frac{v_y}{J}\right)^2-\left(\mu(\gamma-1)\left|\frac{v_y}{J}\right|^2-\gamma\frac{v_y}{J}\pi
   \right)\\
   &=&\mu\frac{v_{yt}}{J}-\gamma\frac{v_y}{J}G,
  \end{eqnarray*}
  which gives
  $$
  v_{yt}=\frac1\mu(JG_t+\gamma v_yG).
  $$
  Therefore, it follows from (\ref{N5}), Proposition \ref{PropEstG}, and Corollary \ref{CorVy} that
  \begin{eqnarray*}
    \int_0^Tt\|v_{yt}\|_2^2dt\leq C\int_0^T(t\|\sqrt JG_t\|_2^2+t\|v_y\|_2^2\|G\|_\infty^2)dt\leq C,
  \end{eqnarray*}
  proving the conclusion.
\end{proof}

In summary, we have the following
\begin{corollary}
  \label{CorApri}
The following estimates hold
\begin{eqnarray*}
  &&\inf_{(y,t)\in(0,L)\times(0,T)}J\geq Ce^{-CT},\\
  &&\sup_{0\leq t\leq T}(\|J\|_{H^1}^2+\|J_t\|_2^2)+\int_0^T\|J_{t}\|_{H^1}^2dt\leq C,\\
  &&\sup_{0\leq t\leq T}\|v\|_{H^1}^2+\int_0^T(\|\sqrt{\varrho_0}v_t\|_2^2+\|v\|_{H^2}^2+t\|v_t\|_{H^1}^2)dt\leq C,\\
  &&\sup_{0\leq t\leq T}\|\pi\|_{H^1}^2+\int_0^T(\|\pi_t\|_\infty^4+\|\pi_{t}\|_{H^1}^{\frac43})dt\leq C,
\end{eqnarray*}
for a positive constant $C$ depending only on $\gamma, \mu, \bar\varrho, \underline J, \|(J_0, v_0, \pi_0)\|_{H^1},$ and $T$.
\end{corollary}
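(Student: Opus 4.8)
The plan is to treat this corollary as the final bookkeeping step that reorganizes the a priori bounds of Propositions \ref{PROPEstJ}--\ref{PropWei} and Corollary \ref{CorVy} so that the resulting constant depends only on the basic quantities $\gamma,\mu,\bar\varrho,\underline J,\|(J_0,v_0,\pi_0)\|_{H^1}$ and $T$ (with the fixed domain length $L$ absorbed into the constants), rather than on the various derived quantities $\ell_0,m_0,E_0,\|G_0\|_2,\|\pi_0\|_\infty,\bar J,\|J_0'\|_2,\|\pi_0'\|_2$ appearing in those earlier statements. The crucial point, which is precisely what permits the vacuum limit in the next section, is that none of the constants in Propositions \ref{PROPEstJ}--\ref{PropWei} depends on the lower density bound $\underline\varrho$; thus the collected bounds are uniform with respect to $\underline\varrho$.

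First I would reduce the derived data quantities using the one-dimensional embedding $H^1((0,L))\hookrightarrow L^\infty$ together with the hypotheses $0<\underline J\le J_0$ and $\varrho_0\le\bar\varrho$. Concretely, $\bar J=\|J_0\|_\infty\le C\|J_0\|_{H^1}$ and $\|\pi_0\|_\infty\le C\|\pi_0\|_{H^1}$ by the embedding; $m_0=\int_0^L\varrho_0\,dy\le L\bar\varrho$ while $\ell_0=\int_0^LJ_0\,dy$ satisfies $L\underline J\le\ell_0\le C\|J_0\|_{H^1}$; the energy satisfies $E_0\le C(\gamma,\bar\varrho,L)(\|v_0\|_{H^1}^2+\|\pi_0\|_{H^1})$ because $\int\varrho_0v_0^2\le\bar\varrho\|v_0\|_2^2$ and $\int\pi_0\le L\|\pi_0\|_\infty$; and, using the lower bound $J_0\ge\underline J$, one has $\|G_0\|_2\le\tfrac{\mu}{\underline J}\|v_0'\|_2+\|\pi_0\|_2\le C(\mu,\underline J)\|(v_0,\pi_0)\|_{H^1}$. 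Finally $\|J_0'\|_2,\|\pi_0'\|_2\le\|(J_0,\pi_0)\|_{H^1}$ trivially. This confirms that every constant occurring in Propositions \ref{PropEstG}--\ref{PropWei} is in fact bounded by a constant of the advertised form.

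With this reduction, each displayed estimate is assembled from the earlier results. The lower bound on $J$ is Proposition \ref{PROPEstJ} directly, with $Ce^{-CT}$ the explicit exponential there. On the $J$-line, $\|J\|_\infty$ is from Proposition \ref{PropJPiInfty} (so $\|J\|_2\le\sqrt L\,\|J\|_\infty$) and $\|J_y\|_2$ from Proposition \ref{PropJPiy}, while $\|J_t\|_2=\|v_y\|_2$ and $\int_0^T\|J_{yt}\|_2^2\,dt=\int_0^T\|v_{yy}\|_2^2\,dt$ follow from $J_t=v_y$ and Corollary \ref{CorVy}. On the $v$-line, $\|v\|_{H^1}$ uses Corollary \ref{CorVy} for $\|v_y\|_2$ together with Poincar\'e $\|v\|_2\le C\|v_y\|_2$ (valid since $v\in H_0^1$); the integrals of $\|\sqrt{\varrho_0}v_t\|_2^2$ and of $\|v\|_{H^2}^2$ are again Corollary \ref{CorVy}; and $\int_0^Tt\|v_t\|_{H^1}^2\,dt$ follows from Proposition \ref{PropWei} for the $v_{yt}$ part, with the $v_t$ part handled by Poincar\'e (differentiating \eqref{BC} in $t$ gives $v_t\in H_0^1$, so $\int_0^Tt\|v_t\|_2^2\,dt\le C\int_0^Tt\|v_{yt}\|_2^2\,dt$). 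On the $\pi$-line, $\|\pi\|_\infty$ is Proposition \ref{PropJPiInfty}, $\|\pi_y\|_2$ is Proposition \ref{PropJPiy}, the $L^4$-in-time bound on $\|\pi_t\|_\infty$ is the stronger estimate established within the proof of Corollary \ref{CorVy} from \eqref{EQpi-v1}, and $\int_0^T\|\pi_t\|_{H^1}^{4/3}\,dt$ combines $\int_0^T\|\pi_{yt}\|_2^{4/3}\,dt$ from Corollary \ref{CorVy} with $\|\pi_t\|_2\le\sqrt L\|\pi_t\|_\infty$ and H\"older in time.

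Since every analytic ingredient is already proved, I expect no genuine obstacle; the only step requiring care is the dependence-tracking of the first paragraph, in particular verifying that $\bar J$, which does appear in Propositions \ref{PropJPiy}, \ref{CorVy}, and \ref{PropWei}, collapses into $\|J_0\|_{H^1}$ via the one-dimensional embedding, and confirming that $\underline\varrho$ genuinely never entered any constant. It is exactly this independence of the density lower bound that makes Corollary \ref{CorApri} the correct object to carry into the vacuum limit.
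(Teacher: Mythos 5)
Your proposal is correct and follows essentially the same route as the paper, whose proof of Corollary \ref{CorApri} is simply the one-line observation that it follows from Propositions \ref{PROPEstJ}, \ref{PropJPiInfty}, \ref{PropJPiy}, \ref{PropWei}, and Corollary \ref{CorVy} ``by using some necessary embedding inequalities''; your write-up supplies exactly those implicit details (the $H^1\hookrightarrow L^\infty$ reduction of $\bar J$, $\|\pi_0\|_\infty$, $E_0$, $\|G_0\|_2$, etc., and the Poincar\'e step for $v_t\in H_0^1$). Your observation that the $L^4$-in-time bound on $\|\pi_t\|_\infty$ comes from the proof (rather than the weaker statement) of Corollary \ref{CorVy} is also exactly what the paper relies on.
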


\begin{proof}
  This is a direct corollary of Propositions \ref{PROPEstJ}, \ref{PropJPiInfty}, \ref{PropJPiy}, \ref{PropWei}, and Corollary \ref{CorVy}, by using some necessary embedding inequalities.
\end{proof}

\begin{remark}
  \label{Remark}
Checking the proofs of Propositions \ref{PropEstG}--\ref{PropWei}, one can easily see that all the
constants $C$ in the arguments viewing as functions of $T$ can be chosen in such a way that are
continuous in $T\in[0,\infty)$.
\end{remark}

We conclude this section with the following global well-posedness result for the non-vacuum case.

\begin{theorem}
\label{THMNONVACUUM}
Under the conditions in Proposition \ref{PropLocal}, there is a unique global solution
$(J,v,\pi)$ to system (\ref{EQJ})--(\ref{EQpi}), subject to (\ref{BC})--(\ref{IC}), satisfying
 \begin{eqnarray*}
&0<J\in C([0,\infty); H^1),\quad  J_t\in L_{\text{loc}}^\infty([0,\infty); L^2)\cap L^2_{\text{loc}}([0,\infty); H^1),\\
&v\in C([0,\infty); H^1_0)\cap L_{\text{loc}}^2([0,\infty); H^2), \quad   v_t \in L_{\text{loc}}^2([0,\infty); L^2),\\
&\sqrt t  v_t \in L_{\text{loc}}^2([0,\infty); H^1),\\
&0\leq\pi\in C([0,\infty); H^1), \quad \pi_t\in L_{\text{loc}}^4([0,\infty); L^\infty)\cap L^{\frac43}_{\text{loc}}([0,\infty);H^1).
  \end{eqnarray*}
\end{theorem}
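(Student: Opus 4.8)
The plan is to combine the local existence result, Proposition \ref{PropLocal}, with the a priori estimates collected in Corollary \ref{CorApri} via a standard continuation argument. The key feature making this work is that the estimates of Section \ref{SecApri} are independent of the lower bound of the density and, by Remark \ref{Remark}, their constants depend on $T$ continuously on $[0,\infty)$; hence none of the controlled quantities can blow up at a finite time.

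First I would set $T_{\max}$ to be the supremum of all $T>0$ for which system (\ref{EQJ})--(\ref{EQpi}), subject to (\ref{BC})--(\ref{IC}), admits a solution on $(0,L)\times(0,T)$ enjoying the regularity listed in Proposition \ref{PropLocal}. By Proposition \ref{PropLocal} itself, $T_{\max}\geq T_0>0$, and on every $[0,T]$ with $T<T_{\max}$ the solution has exactly the regularity required for the estimates of Section \ref{SecApri} to apply. Thus Corollary \ref{CorApri} furnishes bounds on $\sup_{[0,T]}(\|J\|_{H^1}+\|v\|_{H^1}+\|\pi\|_{H^1})$ together with a strictly positive lower bound $\inf J\geq Ce^{-CT}$, uniformly for $T<T_{\max}$.

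Next I would argue by contradiction, assuming $T_{\max}<\infty$. By Remark \ref{Remark} all of these constants remain finite as $T\uparrow T_{\max}$, so in particular $\inf_{(0,L)\times(0,T_{\max})}J>0$, $\sup_{[0,T_{\max})}\|J\|_\infty<\infty$, and the $H^1$ norms of $(J,v,\pi)$ stay bounded; the control of the time derivatives in Corollary \ref{CorApri} then shows that $(J,v,\pi)(\cdot,t)$ is Cauchy in $H^1$ as $t\uparrow T_{\max}$, so the solution extends continuously to $t=T_{\max}$ with a limiting state satisfying $0<\underline J_*\leq J(\cdot,T_{\max})\leq\bar J_*<\infty$, $\pi(\cdot,T_{\max})\geq0$, $(J,\pi)(\cdot,T_{\max})\in H^1$, and $v(\cdot,T_{\max})\in H_0^1$. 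Since $\varrho_0$ is unchanged (only $J_0$ is replaced by $J(\cdot,T_{\max})$), the hypotheses of Proposition \ref{PropLocal} are met, and applying it with initial time $T_{\max}$ produces a solution on $[T_{\max},T_{\max}+T_0']$ with $T_0'>0$ depending only on the finite quantities above. Concatenating with the original solution---legitimate because both pieces lie in $C(\,\cdot\,;H^1)$ and agree at $T_{\max}$---yields a solution past $T_{\max}$, contradicting the definition of $T_{\max}$. Hence $T_{\max}=\infty$.

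Finally, the claimed regularity classes are read off directly from Corollary \ref{CorApri} restricted to each finite interval $[0,T]$ (the subscript ``loc'' being precisely this restriction), with the sign conditions $J>0$ and $\pi\geq0$ coming from Proposition \ref{PROPEstJ} and the conclusion of Proposition \ref{PropLocal}. Global uniqueness follows from the local uniqueness in Proposition \ref{PropLocal} by a connectedness argument: the set of times at which two global solutions coincide is nonempty, relatively closed by continuity, and relatively open by local uniqueness, hence all of $[0,\infty)$. I expect the only delicate point to be verifying that the continuation time $T_0'$ at the restart does not shrink to zero; this is exactly guaranteed by the non-degeneracy of the lower bound of $J$ and the finiteness of the $H^1$ norms at $T_{\max}$, that is, by Corollary \ref{CorApri} together with Remark \ref{Remark}.
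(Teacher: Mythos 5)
Your proposal is correct and follows essentially the same route as the paper: a continuation argument combining the local well-posedness of Proposition \ref{PropLocal} with the density-lower-bound-independent a priori estimates of Corollary \ref{CorApri} and the continuity-in-$T$ of the constants noted in Remark \ref{Remark}, concluding $T_{\max}=\infty$ by contradiction. You merely make explicit two steps the paper leaves implicit (the Cauchy-in-$H^1$ extension of the solution up to $t=T_{\max}$ before restarting, and the open-closed argument for global uniqueness), and both are carried out correctly.
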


\begin{proof}
By Proposition \ref{PropLocal}, there is a unique local solution $(J,v,\pi)$ to system (\ref{EQJ})--(\ref{EQpi}), subject to (\ref{BC})--(\ref{IC}). By iteratively applying Proposition \ref{PropLocal}, one can extend the local solution to the maximal time of existence $T_\text{max}$. We claim that $T_{\text{max}}=\infty$. Assume by contradiction that $T_\text{max}<\infty$.
Then, by Corollary \ref{CorApri} and recalling Remark \ref{Remark}, there is a positive constant $C$, independent of
$T\in(0,T_\text{max})$, such that
\begin{eqnarray*}
  &&\inf_{(y,t)\in(0,L)\times(0,T)}J\geq Ce^{-CT},\\
  &&\sup_{0\leq t\leq T}(\|J\|_{H^1}^2+\|J_t\|_2^2)+\int_0^T\|J_{t}\|_{H^1}^2dt\leq C,\\
  &&\sup_{0\leq t\leq T}\|v\|_{H^1}^2+\int_0^T(\|\sqrt{\varrho_0}v_t\|_2^2+\|v\|_{H^2}^2+t\|v_t\|_{H^1}^2)dt\leq C,\\
  &&\sup_{0\leq t\leq T}\|\pi\|_{H^1}^2+\int_0^T(\|\pi_t\|_\infty^4+\|\pi_{t}\|_{H^1}^{\frac43})dt\leq C,
\end{eqnarray*}
Thanks to this, by the local existence result, Proposition \ref{PropLocal}, one can extend the local solution $(J, v, \pi)$ beyond $T_\text{max}$, contradicting to the definition of $T_\text{max}$. Therefore, it must have $T_\text{max}=\infty$. This proves the conclusion.
\end{proof}

\section{Global well-posedness: in the presence of vacuum}

In this section, we prove our main result as follows.

\begin{proof}[\textbf{Proof of Theorem \ref{thm}}]
\textbf{Existence. }Choose $\varrho_{0n}\in H^1$, with $\frac1n\leq\varrho_{0n}\leq\bar\varrho+1$, such that $\varrho_{0n}\rightarrow\varrho_0$ in $L^q$, for any $q\in(1,\infty)$. By Theorem \ref{THMNONVACUUM}, for any $n$, there is a unique global solution $(J_n, v_n, \pi_n)$ to system (\ref{EQJ})--(\ref{EQpi}), subject to (\ref{BC})--(\ref{IC}), with $\varrho_0$ in (\ref{EQv}) replaced with
$\varrho_{0n}$. By Corollary \ref{CorApri}, there is a positive constant $C$, independent of $n$, such that
\begin{eqnarray}
  &&\inf_{(y,t)\in(0,L)\times(0,T)}J_n\geq Ce^{-CT},\nonumber\\
  &&\sup_{0\leq t\leq T}(\|J_n\|_{H^1}^2+\|\partial_tJ_n\|_2^2)+\int_0^T\|\partial_tJ_n\|_{H^1}^2dt\leq C,\nonumber\\
  &&\sup_{0\leq t\leq T}\|v_n\|_{H^1}^2+\int_0^T(\|\sqrt{\varrho_0}\partial_tv_n\|_2^2+\|v_n\|_{H^2}^2+t\|\partial_tv_n\|_{H^1}^2)dt\leq C,\label{AP3}\\
  &&\sup_{0\leq t\leq T}\|\pi_n\|_{H^1}^2+\int_0^T\|\partial_{t}\pi_n\|_{H^1}^{\frac43}dt\leq C,\nonumber
\end{eqnarray}
for any $T\in(0,\infty)$. By the Aubin-Lions lemma, and using Cantor's diagonal argument, there is a subsequence, still denoted
by $(J_n, v_n, \pi_n)$, and $(J, v, \pi)$ enjoying the regularities
\begin{align}
  &J\in L^\infty(0,T; H^1),\quad J_t\in L^\infty(0,T; L^2)\cap L^2(0,T; H^1), \label{REG1}\\
  &v\in L^\infty(0,T; H^1)\cap L^2(0,T; H^2), \quad\sqrt tv_t\in L^2(0,T; H^1), \label{REG1-2}\\
  &\pi\in L^\infty(0,T; H^1), \quad \pi_t\in L^{\frac43}(0,T; H^1), \label{REG2}
\end{align}
such that
\begin{eqnarray}
  J_n\overset{*}{\rightharpoonup}J, \quad\mbox{in }L^\infty(0,T; H^1), \quad\partial_tJ_n\overset{*}{\rightharpoonup}J_t, \quad\mbox{in }L^\infty(0,T; L^2),\label{WCG1}\\
  \partial_tJ_n\rightharpoonup J_t, \quad\mbox{in }L^2(0,T;H^1),\label{WCG2}\\
  v_n\overset{*}{\rightharpoonup}v,\quad\mbox{in }L^\infty(0,T; H^1), \quad v_n\rightharpoonup v\quad\mbox{in }L^2(0,T;H^2), \label{WCG3}\\
  \partial_tv_n\rightharpoonup v_t,\quad\mbox{in }L^2(\delta, T; H^1),\quad\forall\delta\in(0,T),\label{WCG4}\\
  \pi_n\overset{*}{\rightharpoonup}\pi, \quad\mbox{in }L^\infty(0,T;H^1),\quad\partial_t\pi_n\rightharpoonup \pi_t,\quad\mbox{in }L^{\frac43}(0,T;H^1),\label{WCG5}
\end{eqnarray}
and
\begin{eqnarray}
  &J_n\rightarrow J,\quad\mbox{in }C([0,T]; C([0,L])),\label{SC1}\\
  &v_n\rightarrow v,\quad\mbox{in }C([\delta,T]; C([0,L]))\cap L^2(\delta,T; H^1),\quad\forall\delta\in(0,T),\label{SC2} \\
  &\pi_n\rightarrow\pi,\quad\mbox{in }C([0,T]; C([0,T])). \label{SC3}
\end{eqnarray}
Here, $\rightarrow$, $\rightharpoonup$, and $\overset{*}{\rightharpoonup}$ denote, respectively, the strong, weak, and weak*
convergence in the corresponding spaces.
Thanks to (\ref{WCG1})--(\ref{SC2}), one can take the limit $n\rightarrow\infty$ to show
that $(J, v, \pi)$ is a solution to system (\ref{EQJ})--(\ref{EQpi}), on $(0,L)\times(0,T)$. Moreover, recalling $(J_n,
\pi_n)|_{t=0}=(J_0, \pi_0)$, it is clear from (\ref{SC1}) and (\ref{SC3}) that $(J,
\pi)|_{t=0}=(J_0, \pi_0)$.

One needs to verify the regularities of $(J,v,\pi)$ and that $(\varrho_0v)|_{t=0}=\varrho_0v_0$.
Using (\ref{AP3}) and (\ref{WCG4}), by the lower semi-continuity of the norms, one deduces
\begin{eqnarray*}
  \int_\delta^T\|\sqrt{\varrho_0}v_t\|_2^2dt&\leq&\varliminf_{n\rightarrow\infty}\int_\delta^T\|\sqrt{\varrho_{0n}}\partial_t v_n\|_2^2dt\leq C,
\end{eqnarray*}
for any $\delta\in(0,T)$, and for a positive constant $C$ independent of $\delta$, and, thus,
$\sqrt{\varrho_0}v_t\in L^2(0,T;L^2).$
The desired regularities $J,\pi\in C([0,T];H^1)$ follow from  (\ref{REG1}) and (\ref{REG2}).

It remains to verify $\varrho_0v\in C([0,T];L^2)$ and $(\varrho_0v)|_{t=0}=\varrho_0v_0.$ To this end, noticing that
(\ref{REG1-2}) and (\ref{REG2}) imply $v\in C((0,T]; H^1)$, it suffices to show that $(\varrho_0v)(\cdot,t)\rightarrow\varrho_0v_0$, strongly
in $L^2$, as $t\rightarrow0$.
Using (\ref{AP3}), it follows
\begin{eqnarray}
  \|\varrho_{0n}(v_n-v_0)\|_2&=&\left\|\varrho_{0n}\int_0^t\partial_tv_nds\right\|_2\leq C\int_0^t\|
  \sqrt{\varrho_{0n}}\partial_tv_n\|_2ds\nonumber\\
  &\leq& C\sqrt t\|\sqrt{\varrho_{0n}}\partial_tv_n\|_{L^2(0,T;L^2)}\leq C\sqrt t, \label{IVV1}
\end{eqnarray}
for a positive constant $C$ independent of $n$. Recalling (\ref{SC2}) and $\varrho_{0n}\rightarrow\varrho_0$, for any $q>1$,
one has
\begin{equation}
\label{IVV2}
(\varrho_{0n}v_n)(\cdot,t)\rightarrow(\varrho_0v)(\cdot,t),
\quad\mbox{in}\quad L^2, \quad\forall t>0.
\end{equation}
It follows from (\ref{IVV1}) that
\begin{eqnarray*}
  \|\varrho_0(v-v_0)\|_2(t)&\leq&\|\varrho_0v-\varrho_{0n}v_n\|(t)+\|\varrho_{0n}(v_n-v_0)\|_2(t)
  +\|(\varrho_{0n}-\varrho_0)v_0\|_2\\
  &\leq&\|\varrho_0v-\varrho_{0n}v_n\|(t)+C\sqrt t
  +C\|\varrho_{0n}-\varrho_0\|_2,
\end{eqnarray*}
where $C$ is independent of $n$, from which, recalling (\ref{IVV2}), one can take the limit $n\rightarrow\infty$ to get
$$
\|\varrho_0(v-v_0)\|_2(t)\leq C\sqrt t.
$$
This proves the continuity of $\varrho_0v$ at $t=0$ and verifies $\varrho_0v|_{t=0}=\varrho_0v_0.$

Therefore, $(J, v, \pi)$ is a global
solution to system (\ref{EQJ})--(\ref{EQpi}), subject to the initial and boundary conditions (\ref{BC})--(\ref{IC}),
satisfying the regularities stated in Theorem \ref{thm}. This proves the existence part of Theorem \ref{thm}.

\textbf{Uniqueness.} Let $(J_1, v_1, \pi_1)$ and $(J_2, v_2, \pi_2)$ be two solutions to system (\ref{EQJ})--(\ref{EQpi}),
subject to (\ref{BC})--(\ref{IC}), and denote $(J, v, \pi):=(J_1-J_2, v_1-v_2, \pi_1-\pi_2)$. Then, straightforward calculations lead to
\begin{align}
  &J_t=v_y, \label{D1}\\
  &\varrho_0v_t-\mu\left(\frac{v_y}{J_1}\right)_y+\mu\left(\frac{J v_{2y}}{J_1J_2}\right)_y+\pi_y=0,\label{D2}\\
  &\pi_t+\gamma\left(\frac{\pi v_{1y}}{J_1}+\frac{\pi_2 v_y}{J_1}-\frac{J\pi_2v_{2y}}{J_1J_2}\right)
  =\mu(\gamma-1)\left(\frac{v_{1y}}{J_1}+\frac{v_{2y}}{J_2}\right)\left(\frac{v_y}{J_1}-\frac{J v_{2y}}{J_1J_2}\right).
  \label{D3}
\end{align}
Multiplying (\ref{D1}), (\ref{D2}), and (\ref{D3}), respectively, with $J, v,$ and $\pi$, and integrating the resultants
over $(0,L)$, one gets from integration by parts and using the Young inequalities that
\begin{eqnarray*}
  &&\frac12\frac d{dt}\|J\|_2^2\leq\varepsilon\|v_y\|_2^2+C_\varepsilon\|J\|_2^2, \\
  &&\frac12\frac d{dt}\|\sqrt{\varrho_0}v\|_2^2+\mu\left\|\frac{v_y}{\sqrt{J_1}}\right\|_2^2
  \leq\varepsilon\|v_y\|_2^2+C_\varepsilon(\|\pi\|_2^2+\|v_{2y}\|_\infty^2\|J\|_2^2),\\
  &&\frac12\frac d{dt}\|\pi\|_2^2\leq\varepsilon\|v_y\|_2^2+C_\varepsilon(\|v_{1y}\|_\infty^2+\|v_{2y}\|_\infty^2+\|\pi_2\|_\infty^2)
  (\|J\|_2^2+\|\pi\|_2^2),
\end{eqnarray*}
where the fact that $J_1$ and $J_2$ have positive lower bounds on $(0,L)\times(0,T)$ for any finite $T$ has been used.
Adding up the previous three inequalities and choosing $\varepsilon$ sufficiently small, one obtains
\begin{eqnarray*}
  &&\frac d{dt}(\|J\|_2^2+\|\sqrt{\varrho_0}v\|_2^2+\|\pi\|_2^2)+\mu\left\|\frac{v_y}{\sqrt{J_1}}\right\|_2^2\\
  &\leq& C(1+\|v_{1y}\|_\infty^2+\|v_{2y}\|_\infty^2+\|\pi_2\|_\infty^2)
  (\|J\|_2^2+\|\pi\|_2^2),
\end{eqnarray*}
from which, noticing that $\pi_i, v_{iy}\in L^2(0,T;L^\infty), i=1, 2$, and by the Gronwall inequality,
one obtains $J\equiv\pi\equiv\sqrt{\varrho_0}v\equiv v_y\equiv0$. Thanks to this, by the Poincar\'e inequality, the
uniqueness follows.
\end{proof}

\section{Appendix: local well-posedness, i.e., proof of Proposition \ref{PropLocal}}

In this appendix, we prove the local well-posedness of system (\ref{EQJ})--(\ref{EQpi}), subject to (\ref{BC})--(\ref{IC}), for the case that the the initial density $\varrho_0$ is uniformly away from vacuum. In other words, we give the proof of Proposition \ref{PropLocal}.

For positive time $T\in(0,\infty)$, denote
$$
  Q_T:=(0,L)\times(0,T),\quad X_T:=L^\infty(0,T; H_0^1)\cap L^2(0,T; H^2),
$$
and
$$
\|f\|_{V_T}:=\left(\sup_{0\leq t\leq T}\|f\|_2^2+\int_0^T\|f_{y}\|_2^2dt\right)^{\frac12}.
$$
For positive numbers $M$ and $T$, we denote
$$
\mathscr K_{M,T}:=\left\{v\in X_T, \|v_y\|_{V_T}\leq M\right\}.
$$
By the Poincar\'e inequality, one can verify that $\mathscr K_{M,T}$ is a closed subset of $X_T$.

Given $(\varrho_0, J_0, v_0, \pi_0)$, satisfying
\begin{eqnarray}
  &0<\underline\varrho\leq\varrho_0\leq\bar\varrho<\infty,\quad 0<\underline J\leq J_0\leq\bar J<\infty,
  \label{H1}\\
   &\pi_0\geq0,\quad (\varrho_0, J_0, \pi_0)\in H^1,\quad v_0\in H_0^1,\label{H2}
\end{eqnarray}
for positive numbers $\underline\varrho, \bar\varrho, \underline J,$ and $\bar J$.

Define three mappings $\mathscr Q, \mathscr R,$ and $\mathscr F$ as follows. First, for $v\in\mathscr K_{M,T}$,
define $J=\mathscr Q(v)$ as the unique solution to
$$
J_t=v_y,\quad J|_{t=0}=J_0.
$$
Next, for given $v\in\mathscr K_{M,T}$, and with $J$ solved as above, define $\pi=\mathscr R(v)$ as the unique solution to
\begin{eqnarray*}
  \pi_t+\gamma\frac{v_y}{J}\pi=\mu(\gamma-1)\left(\frac{v_y}{J}\right)^2,\quad
  \pi|_{t=0}=\pi_0.
\end{eqnarray*}
And finally, for given $v\in\mathscr K_{M,T}$, and with $J$ and $\pi$ solved as above, define $V=\mathscr F(v)$ as the
unique solution to
\begin{equation}\label{L6}
\left\{
  \begin{array}{ll}
    V_t-\mu\frac{ V_{yy}}{J\varrho_0}=-\left(\mu\frac{J_y v_y}{J^2\varrho_0 }+\frac{\pi_y}{\varrho_0}\right),&\mbox{in }Q_T,\\
    V(0,t)=V(L,t)=0,&t\in(0,T),\\
    V(y,0)=v_0(y),&y\in(0,L).
  \end{array}
\right.
\end{equation}

It is clear that
\begin{eqnarray*}
 \mathscr Q(v)=J_0+\int_0^tv_yds,\quad
\mathscr R(v)=\mathscr R_1(v)+\mu(\gamma-1)\mathscr R_2(v), \label{R}
\end{eqnarray*}
where
$$
  \left\{
  \begin{array}{l}\mathscr R_1(v)=\pi_0\exp\left\{-\gamma\int_0^t\frac{v_y}{J}ds\right\},
  \\
  \mathscr R_2(v)=\int_0^t\left(\frac{v_y}{J}
  \right)^2\exp\left\{-\gamma\int_\tau^t\frac{v_y}{J}ds\right\}d\tau,
  \end{array}
  \right.\quad\mbox{with }J=\mathscr Q(v).
$$
%It is clear that $J=\mathscr Q(v)$ and $\pi=\mathscr R(v)$ solve
%
%
%Define another mapping $\mathscr F$, with $V=\mathscr F(v)$ being the unique solution to the following initial boundary value problem:

In order to prove the local existence and uniqueness
of solutions to system (\ref{EQJ})--(\ref{EQpi}), subject to (\ref{BC})--(\ref{IC}), and recalling the definitions of the mappings $\mathscr Q, \mathscr R$, and $\mathscr F$, it suffices to show that the mapping $\mathscr F$ has a unique fixed point in $X_T$, which will be proved by the contractive mapping principle.

For simplicity of notations, throughout this section, we agree the following:
\begin{eqnarray*}
&J=\mathscr Q(v),\quad\pi=\mathscr R(v), \quad J_i=\mathscr Q(v_i),\quad\pi_i=\mathscr R(v_i), \quad i=1,2, \\
&\delta J=J_1-J_2,\quad \delta\pi=\pi_1-\pi_2,   \quad \delta v=v_1-v_2,
\end{eqnarray*}
for arbitrary $v, v_1, v_2\in\mathscr K_{M,T}$.
By the Poinc\'are and Gagliardo-Nirenberg inequality, there is a positive constant $C_1$ depending
only on $L$, such that
\begin{equation}
\|v_{y}\|_\infty\leq C_1\|v_{y}\|_2^{\frac12}\|v_{yy}\|_2^{\frac12}.\label{EBBINQ}
\end{equation}
This kind inequality for $v$ will be frequently used without further mentions, and we use $C_1$
specifically to denote the constant in the above inequality.

In the rest of this section, we always assume that $M$ and $T$ are two positive constants, to be determined later, satisfying
\begin{equation}
  MT^{\frac14}\leq 1, \quad T\leq 1. \label{H}
\end{equation}

\begin{proposition}
  \label{PropA0}
(i) For any $v\in X_T$, it follows that
$$
\| v_y \|_{L^2(0,T; L^\infty)}\leq C_1T^{\frac14}\| v_y \|_{V_T},\quad \| v_y \|_{L^1(0,T; L^\infty)}\leq C_1T^{\frac34}\| v_y \|_{V_T}.
$$
(ii) Consequently, for any $v\in\mathscr K_{M,T}$, one has
\begin{eqnarray*}
   \|v_y\|_{L^2(0,T;L^\infty)}\leq C_1,\quad
   \|v_y\|_{L^1(0,T;L^\infty)}\leq C_1.
\end{eqnarray*}

\end{proposition}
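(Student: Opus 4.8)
The plan is to derive both estimates purely from the Gagliardo--Nirenberg interpolation inequality (\ref{EBBINQ}) together with the Cauchy--Schwarz inequality in the time variable; no differential inequality or iteration is needed here, so this is essentially a bookkeeping lemma preparing the powers of $T$ used later in the contraction argument.

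For the first estimate in (i), I would start from (\ref{EBBINQ}), square it, and integrate in time:
$$
\|v_y\|_{L^2(0,T;L^\infty)}^2=\int_0^T\|v_y\|_\infty^2\,dt\leq C_1^2\int_0^T\|v_y\|_2\|v_{yy}\|_2\,dt.
$$
The key step is to treat the two factors asymmetrically: bound the lower-order factor $\|v_y\|_2$ by its supremum in time, and apply Cauchy--Schwarz to the remaining time-integral of the higher-order factor $\|v_{yy}\|_2$, which produces a factor $T^{1/2}$,
$$
\int_0^T\|v_y\|_2\|v_{yy}\|_2\,dt\leq\Big(\sup_{0\leq t\leq T}\|v_y\|_2\Big)T^{\frac12}\Big(\int_0^T\|v_{yy}\|_2^2\,dt\Big)^{\frac12}.
$$
Since both $\sup_{0\leq t\leq T}\|v_y\|_2$ and $(\int_0^T\|v_{yy}\|_2^2\,dt)^{1/2}$ are each bounded by $\|v_y\|_{V_T}$ (reading off the definition of the $V_T$ norm with $f=v_y$, so that $f_y=v_{yy}$), the right-hand side is at most $C_1^2 T^{1/2}\|v_y\|_{V_T}^2$, and taking square roots gives the first inequality.

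The second estimate in (i) follows by one further application of Cauchy--Schwarz in time, now passing from the $L^2$-in-time norm to the $L^1$-in-time norm:
$$
\|v_y\|_{L^1(0,T;L^\infty)}=\int_0^T\|v_y\|_\infty\,dt\leq T^{\frac12}\|v_y\|_{L^2(0,T;L^\infty)},
$$
and substituting the first inequality yields the claimed $T^{3/4}$ bound. Part (ii) is then immediate: for $v\in\mathscr K_{M,T}$ one has $\|v_y\|_{V_T}\leq M$ by definition, so the two bounds of (i) become $C_1(MT^{1/4})$ and $C_1(MT^{1/4})T^{1/2}$, respectively, and both are controlled by $C_1$ using the standing assumptions $MT^{1/4}\leq1$ and $T\leq1$ from (\ref{H}).

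There is no genuine obstacle in this proposition. The only point requiring care is the asymmetric handling of the product $\|v_y\|_2\|v_{yy}\|_2$ --- pairing the supremum bound on the lower-order factor with the time-integrated higher-order factor --- since a naive symmetric application of Cauchy--Schwarz (e.g. splitting as $(\int\|v_y\|_2^2)^{1/2}(\int\|v_{yy}\|_2^2)^{1/2}$) would fail to extract the factor of $T^{1/2}$ and hence would not reproduce the sharp powers of $T$ on which the later contraction estimate under (\ref{H}) depends.
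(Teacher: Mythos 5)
Your proof is correct and follows essentially the same route as the paper's: the Gagliardo--Nirenberg inequality (\ref{EBBINQ}), the asymmetric bound $\int_0^T\|v_y\|_2\|v_{yy}\|_2\,dt\leq\sup_t\|v_y\|_2\cdot T^{\frac12}\big(\int_0^T\|v_{yy}\|_2^2\,dt\big)^{\frac12}$, H\"older in time for the $L^1$ estimate, and the conditions (\ref{H}) for part (ii). One small correction to your closing remark: the symmetric Cauchy--Schwarz split is not actually an obstacle, since $\big(\int_0^T\|v_y\|_2^2\,dt\big)^{\frac12}\leq T^{\frac12}\sup_{0\leq t\leq T}\|v_y\|_2$ recovers exactly the same factor of $T^{\frac12}$; both routes yield the stated powers of $T$.
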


\begin{proof}
  For any $v\in X_T$, by the H\"older inequality and (\ref{EBBINQ}), one deduces
  \begin{eqnarray*}
    \|v_y\|_{L^2(0,T;L^\infty)}&=&\left(\int_0^T\|v_y\|_\infty^2dt\right)^{\frac12}
    \leq C_1\left(\int_0^T\|v_y\|_2\|v_{yy}\|_2dt\right)^{\frac12}\nonumber\\
    &\leq&C_1\left[\sup_{0\leq t\leq T}\|v_y\|_2^2\left(\int_0^T\|v_{yy}\|_2^2dt\right)^{\frac12}T^{\frac12}\right]^{\frac12}
    \leq C_1T^{\frac14}\| v_y \|_{V_T},
  \end{eqnarray*}
  which leads to the first inequality in (i).
  The second inequality in (i) follows from the first one by simply applying the H\"older inequality. The inequalities in (ii)
  follow from those in (i) by using the conditions in (\ref{H}).
\end{proof}

\subsection{Properties of $\mathscr Q$}
\begin{proposition}
  \label{PropA1}
(i) It holds that
\begin{eqnarray*}
  \|\partial_y\mathscr Q(v)\|_{L^\infty(0,T; L^2)}&\leq&\|J_0'\|_2+1, \\
  \|\mathscr Q(v_1)-\mathscr Q(v_2)\|_{L^\infty(Q_T)}&\leq& C_1T^{\frac34}\|\partial_y(v_1-v_2)\|_{V_T},\\
  \|\partial_y(\mathscr Q(v_1)-\mathscr Q(v_2))\|_{L^\infty(0,T; L^2)}&\leq& T^{\frac12}\|\partial_y(v_1-v_2)\|_{V_T},
\end{eqnarray*}
for any $v, v_1, v_2\in\mathscr K_{M,T}$.

(ii) Assume, in addition, that $T\leq\left(\frac{\underline J}{2C_1}\right)^2$. Then,
  $$
  \frac{\underline J}{2}\leq \mathscr Q(v)\leq 2\bar J,  \quad\mbox{on }Q_T,
  $$
  for any $v\in\mathscr K_{M,T}$.
\end{proposition}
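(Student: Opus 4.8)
The plan is to exploit the explicit formula $\mathscr{Q}(v) = J_0 + \int_0^t v_y\,ds$ throughout, so that each bound reduces to estimating a time integral of a spatial norm of $v_y$ or $v_{yy}$, after which Proposition~\ref{PropA0} and the structural constraints~\eqref{H} on $M$ and $T$ close everything. All three estimates in part~(i), as well as part~(ii), are direct; there is no genuinely hard step, only careful bookkeeping of the powers of $T$ and $M$.

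For the first estimate in (i), I would differentiate the formula in $y$ to obtain $\partial_y\mathscr{Q}(v) = J_0' + \int_0^t v_{yy}\,ds$, take the $L^2$ norm, and bound $\int_0^t\|v_{yy}\|_2\,ds$ by Cauchy--Schwarz in time by $T^{1/2}\big(\int_0^T\|v_{yy}\|_2^2\,ds\big)^{1/2}\leq T^{1/2}\|v_y\|_{V_T}\leq MT^{1/2}$, using that $\int_0^T\|v_{yy}\|_2^2\,ds\leq\|v_y\|_{V_T}^2$ by the very definition of the $V_T$ norm. The decisive algebraic observation is $MT^{1/2}=(MT^{1/4})T^{1/4}\leq T^{1/4}\leq 1$, which uses both inequalities in~\eqref{H}; this yields exactly the clean bound $\|J_0'\|_2+1$.

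The two difference estimates follow the same template applied to $v_1-v_2\in X_T$. Writing $\mathscr{Q}(v_1)-\mathscr{Q}(v_2)=\int_0^t\partial_y(v_1-v_2)\,ds$, I would bound the $L^\infty(Q_T)$ norm pointwise by $\int_0^T\|\partial_y(v_1-v_2)\|_\infty\,ds=\|\partial_y(v_1-v_2)\|_{L^1(0,T;L^\infty)}$ and then invoke the second inequality of Proposition~\ref{PropA0}(i) with $v$ replaced by $v_1-v_2$ to produce the factor $C_1T^{3/4}$. For the $y$-derivative of the difference, I would differentiate once more in $y$, giving $\int_0^t(v_1-v_2)_{yy}\,ds$, and repeat the Cauchy--Schwarz-in-time argument from the first estimate verbatim to obtain the factor $T^{1/2}$.

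Finally, for (ii) I would estimate $|\mathscr{Q}(v)(y,t)-J_0(y)|=\big|\int_0^t v_y\,ds\big|\leq\|v_y\|_{L^1(0,T;L^\infty)}\leq C_1T^{3/4}\|v_y\|_{V_T}\leq C_1T^{3/4}M$ again via Proposition~\ref{PropA0}(i). Using $MT^{1/4}\leq 1$ reduces this to $C_1T^{1/2}$, and the additional hypothesis $T\leq(\underline J/(2C_1))^2$ forces $C_1T^{1/2}\leq\underline J/2$. Then $\mathscr{Q}(v)\geq J_0-\underline J/2\geq\underline J/2$ and $\mathscr{Q}(v)\leq J_0+\underline J/2\leq\bar J+\underline J/2\leq 2\bar J$, the last step using $\underline J\leq\bar J$. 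The only point requiring care is to keep the extra smallness assumption on $T$ separate from~\eqref{H} and to work with the $L^1(0,T;L^\infty)$ rather than the $L^2(0,T;L^\infty)$ bound on $v_y$, so that the decisive power is $T^{1/2}$ rather than $T^{1/4}$.
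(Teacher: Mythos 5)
Your proposal is correct and follows essentially the same route as the paper: the explicit formula $\mathscr Q(v)=J_0+\int_0^t v_y\,ds$, Cauchy--Schwarz in time for the $L^2$-type bounds, Proposition \ref{PropA0} for the $L^\infty$-type bounds, and the constraints (\ref{H}) to absorb the factors of $M$. The only cosmetic difference is in part (ii), where the paper invokes Proposition \ref{PropA0}(ii) directly (i.e.\ $\|v_y\|_{L^2(0,T;L^\infty)}\leq C_1$ followed by H\"older in time) while you unpack the same content via the $L^1(0,T;L^\infty)$ bound and $MT^{1/4}\leq 1$; both yield the identical bound $C_1T^{1/2}\leq \underline J/2$.
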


\begin{proof}
  (i) Recalling the expression of $\mathscr Q$, it is clear that
  \begin{eqnarray*}
    \|\partial_y\mathscr Q(v)\|_2&=&\left\|J_0'+\int_0^tv_{yy}d\tau\right\|_2\leq\|J_0'\|_2+\int_0^t\|v_{yy}\|_2d\tau\\
    &\leq&\|J_0'\|_2+T^{\frac12}\|v_{yy}\|_{L^2(Q_T)}\leq \|J_0'\|_2+T^{\frac12}M\leq\|J_0'\|_2+1,
  \end{eqnarray*}
  where (\ref{H}) has been used. Similarly,
  \begin{eqnarray*}
  \|\partial_y(\mathscr Q(v_1)-\mathscr Q(v_2))\|_2&=&\left\|\int_0^t(v_1-v_2)_{yy}d\tau\right\|_2
  \leq T^{\frac12}\|(v_1-v_2)_{yy}\|_{L^2(Q_T)}\\
  &\leq& T^{\frac12}\|(v_1-v_2)_y\|_{V_T}.
  \end{eqnarray*}
  By (i) of Proposition \ref{PropA0}, one deduces
  \begin{eqnarray*}
    \|\mathscr Q(v_1)-\mathscr Q(v_2)\|_\infty&=&\left\|\int_0^t(v_1-v_2)_{y}d\tau\right\|_\infty \leq\|(v_1-v_2)_{y}\|_{L^1(0,T;L^\infty)}\\
    &\leq& C_1T^{\frac34}\|(v_1-v_2)_y\|_{V_T}.
  \end{eqnarray*}

  (ii) If $T\leq (\frac{\underline J}{2C_1} )^2$, it follows from (ii) of Proposition \ref{PropA0} that
$$
    \left\|\int_0^tv_yd\tau\right\|_\infty \leq
     T^{\frac12}\|v_y\|_{L^2(0,T;L^\infty)} \leq C_1T^{\frac12}\leq\frac{\underline J}{2},
$$
and, consequently,
  \begin{eqnarray*}
    &\mathscr Q(v)=J_0+\int_0^tv_yd\tau\geq\underline J-\frac{\underline J}{2}=\frac{\underline J}{2},\\
    &\mathscr Q(v)=J_0+\int_0^tv_yd\tau\leq\overline J+\frac{\underline J}{2}\leq 2\bar J,
  \end{eqnarray*}
proving the conclusion.
\end{proof}

Due to Proposition \ref{PropA1}, in the rest of this section, we always assume, in addition to (\ref{H}), that
$T\leq\left(\frac{\underline J}{2c_1}\right)^2$, so that (ii) of Proposition \ref{PropA1} applies.

\begin{proposition}
  \label{PropA2}
  The following estimates hold:
$$
    \left\|\frac{v_{1y}}{\mathscr Q(v_1)}-\frac{v_{2y}}{\mathscr Q(v_2)}\right\|_{L^2(0,T; L^\infty)}\leq CT^{\frac14}\|\partial_y(v_1-v_2)\|_{V_T},
$$
and
\begin{eqnarray*}
\left\|\exp\left\{-\gamma\int_\tau^t\frac{v_{1y}}{\mathscr Q(v_1)}ds\right\}-\exp\left\{-\gamma\int_\tau^t\frac{v_{2y}}{\mathscr Q(v_2)}ds\right\}\right\|_\infty\\
\leq CT^{\frac34}\|\partial_y(v_1-v_2)\|_{V_T},
\end{eqnarray*}
  for any $0\leq\tau<t\leq T$, and $v_1, v_2\in\mathscr K_{M,T}$, where $C$ is a positive constant depending only on $\gamma, L, \underline J$.
\end{proposition}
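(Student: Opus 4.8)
The plan is to prove both estimates by writing the difference of the two quantities as a telescoping sum, introducing intermediate hybrid terms so that each resulting piece isolates a single difference (either $v_{1y}-v_{2y}$ or $\mathscr Q(v_1)-\mathscr Q(v_2)$), and then to bound each piece using the lower bound $\mathscr Q(v_i)\geq\frac{\underline J}{2}$ from Proposition \ref{PropA1}(ii), the $V_T$-control on $v_{iy}$, and the smallness factors $T^{1/4}$, $T^{3/4}$ generated by Proposition \ref{PropA0}.

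For the first estimate I would decompose
\begin{equation*}
\frac{v_{1y}}{\mathscr Q(v_1)}-\frac{v_{2y}}{\mathscr Q(v_2)}
=\frac{v_{1y}-v_{2y}}{\mathscr Q(v_1)}
+v_{2y}\left(\frac{1}{\mathscr Q(v_1)}-\frac{1}{\mathscr Q(v_2)}\right)
=\frac{\partial_y(v_1-v_2)}{\mathscr Q(v_1)}
-\frac{v_{2y}\,(\mathscr Q(v_1)-\mathscr Q(v_2))}{\mathscr Q(v_1)\mathscr Q(v_2)}.
\end{equation*}
The first term is controlled in $L^2(0,T;L^\infty)$ by $\frac{2}{\underline J}\|\partial_y(v_1-v_2)\|_{L^2(0,T;L^\infty)}$, hence by $\frac{2C_1}{\underline J}T^{1/4}\|\partial_y(v_1-v_2)\|_{V_T}$ via Proposition \ref{PropA0}(i). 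For the second term I would use the lower bound on $\mathscr Q(v_i)$ to pull out the factor $\frac{4}{\underline J^2}$, then estimate the $L^2(0,T;L^\infty)$ norm by placing $v_{2y}$ in $L^2(0,T;L^\infty)$ (bounded by $C_1$ through Proposition \ref{PropA0}(ii)) and $\mathscr Q(v_1)-\mathscr Q(v_2)$ in $L^\infty(Q_T)$ (bounded by $C_1T^{3/4}\|\partial_y(v_1-v_2)\|_{V_T}$ from Proposition \ref{PropA1}(i)); the product of these two smallness factors is comfortably absorbed into the claimed $T^{1/4}$.

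For the second estimate I would exploit the elementary Lipschitz bound $|e^a-e^b|\leq e^{\max(|a|,|b|)}|a-b|$ applied with $a=-\gamma\int_\tau^t\frac{v_{1y}}{\mathscr Q(v_1)}\,ds$ and $b=-\gamma\int_\tau^t\frac{v_{2y}}{\mathscr Q(v_2)}\,ds$. The exponents are uniformly bounded in $L^\infty$: since $\left|\int_\tau^t\frac{v_{iy}}{\mathscr Q(v_i)}\,ds\right|\leq\frac{2}{\underline J}\|v_{iy}\|_{L^1(0,T;L^\infty)}\leq\frac{2C_1}{\underline J}$ by Proposition \ref{PropA0}(ii), the prefactor $e^{\max(|a|,|b|)}$ is a constant depending only on $\gamma,L,\underline J$. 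It then remains to bound $|a-b|$ in $L^\infty$, which reduces to $\gamma\int_\tau^t\left|\frac{v_{1y}}{\mathscr Q(v_1)}-\frac{v_{2y}}{\mathscr Q(v_2)}\right|ds\leq\gamma\left\|\frac{v_{1y}}{\mathscr Q(v_1)}-\frac{v_{2y}}{\mathscr Q(v_2)}\right\|_{L^1(0,T;L^\infty)}$; applying Hölder in $t$ to convert the already-proven $L^2(0,T;L^\infty)$ bound into an $L^1(0,T;L^\infty)$ bound picks up an extra $T^{1/2}$, so combined with the $T^{1/4}$ from the first estimate this yields the required $T^{3/4}$.

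The routine part is the algebra of the telescoping decomposition and keeping track of the powers of $T$; no single step is genuinely hard. The only point that requires a moment's care is \emph{verifying that the exponential prefactor in the second estimate stays uniformly bounded}, which is exactly where the standing smallness hypotheses \eqref{H} together with the uniform bound $\|v_{iy}\|_{L^1(0,T;L^\infty)}\leq C_1$ from Proposition \ref{PropA0}(ii) are essential — without them the constant would depend on $\|v_{iy}\|$ in an uncontrolled way. Everything else follows by Hölder's inequality and the two monotonicity/bound facts from Propositions \ref{PropA0} and \ref{PropA1}.
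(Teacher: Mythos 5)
Your proposal is correct and follows essentially the same route as the paper: the same splitting $\frac{v_{1y}-v_{2y}}{\mathscr Q(v_1)}-\frac{v_{2y}(\mathscr Q(v_1)-\mathscr Q(v_2))}{\mathscr Q(v_1)\mathscr Q(v_2)}$ combined with Propositions \ref{PropA0} and \ref{PropA1} for the first estimate, and the same exponential Lipschitz argument (the paper phrases it via the mean value theorem with a convex-combination exponent, which is what your bound $|e^a-e^b|\leq e^{\max(|a|,|b|)}|a-b|$ amounts to) followed by H\"older in time to trade the $L^1(0,T;L^\infty)$ norm for $T^{\frac12}$ times the $L^2(0,T;L^\infty)$ norm.
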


\begin{proof}
Applying Proposition \ref{PropA0} and Proposition \ref{PropA1},
one deduces
\begin{eqnarray}
  &&\left\|\frac{v_{1y}}{\mathscr Q(v_1)}-\frac{v_{2y}}{\mathscr Q(v_2)}\right\|_{L^2(0,T; L^\infty)}\nonumber\\
  &=&\left\|\frac{(v_1-v_2)_y}{\mathscr Q(v_1)}-\frac{(\mathscr Q(v_1)-\mathscr Q(v_2))v_{2y}}{\mathscr Q(v_1)\mathscr Q(v_2)}\right\|_{L^2(0,T;L^\infty)}\nonumber\\
  &\leq&\frac{2}{\underline J}\|(v_1-v_2)_y\|_{L^2(0,T; L^\infty)}+\left(\frac{2}{\underline J}\right)^2\|\mathscr Q(v_1)-\mathscr Q(v_2)\|_{L^\infty(
  Q_T)}\|v_{2y}\|_{L^2(0,T;L^\infty)}\nonumber\\
  &\leq&\left(\frac{2C_1}{\underline J}T^{\frac14}+\left(\frac{2C_1}{\underline J}\right)^2T^{\frac34}\right)\|(v_1-v_2)_y\|_{V_T}\nonumber\\
  &\leq& CT^{\frac14}\|(v_1-v_2)_y\|_{V_T}. \label{L2}
\end{eqnarray}
By the mean value theorem, there is a number $\eta\in(0,1)$, such that
\begin{eqnarray*}
&&\exp\left\{-\gamma\int_\tau^t\frac{v_{1y}}{\mathscr Q(v_1)}ds\right\}-\exp\left\{-\gamma\int_\tau^t\frac{v_{2y}}{\mathscr Q(v_2)}ds\right\}\\
&=&-\gamma \exp\left\{-\gamma\int_\tau^t\left(\eta\frac{v_{1y}}{\mathscr Q(v_1)}+(1-\eta)\frac{v_{2y}}{\mathscr Q(v_2)}\right)ds\right\}
\int_\tau^t\left(\frac{v_{1y}}{\mathscr Q(v_1)}-\frac{v_{2y}}{\mathscr Q(v_2)}\right)ds.
\end{eqnarray*}
Thus, using (\ref{L2}), it follows from Proposition \ref{PropA0} and Proposition \ref{PropA1} that
\begin{eqnarray*}
 &&\left\|\exp\left\{-\gamma\int_\tau^t\frac{v_{1y}}{\mathscr
  Q(v_1)}ds\right\}-\exp\left\{-\gamma\int_\tau^t\frac{v_{2y}}{\mathscr Q(v_2)}ds\right\}\right\|_\infty\\
  &\leq&\gamma e^{\gamma\left(\eta\left\|\frac{v_{1y}}{\mathscr Q(v_1)}\right\|_{L^1(0,T; L^\infty)}+(1-\eta)
  \left\|\frac{v_{2y}}{\mathscr Q(v_2)}\right\|_{L^1(0,T; L^\infty)}\right)}\left\|\frac{v_{1y}}{\mathscr Q(v_1)}-\frac{v_{2y}}{\mathscr Q(v_2)}
  \right\|_{L^1(0,T; L^\infty)}\\
  &\leq&\gamma e^{\frac{2\gamma C_2}{\underline J}}T^{\frac12}\left\|\frac{v_{1y}}{\mathscr Q(v_1)}-\frac{v_{2y}}{\mathscr Q(v_2)}
  \right\|_{L^2(0,T; L^\infty)}\leq CT^{\frac34}\|(v_1-v_2)_y\|_{V_T},
\end{eqnarray*}
proving the conclusion.
\end{proof}

\begin{proposition}
The following estimates hold
  \label{PropA3}
\begin{eqnarray*}
\Big\|\Big(\frac{v_{y}}{\mathscr Q(v)}\Big)_y\Big\|_{L^2(Q_T)\cap L^1(0,T;L^2)} \leq  C(1+M+\|J_0'\|_2), \\
\Big\|\Big(\frac{v_{1y}}{\mathscr Q(v_1)}-\frac{v_{2y}}{\mathscr Q(v_2)}\Big)_y\Big\|_{L^2(Q_T)} \leq  C(1+\|J_0'\|_2)\|\partial_y(v_1-v_2)\|_{V_T},
\end{eqnarray*}
and
\begin{eqnarray*}
 \left\|\partial_y\left(\exp\left\{-\gamma\int_\tau^t\frac{v_{1y}}{\mathscr Q(v_1)}ds\right\}-\exp\left\{-\gamma\int_\tau^t\frac{v_{2y}}{\mathscr Q(v_2)}ds\right\}\right)\right\|_{L^\infty(0,T;L^2)}\\
\leq C(1+\|J_0'\|_2)T^{\frac12}\|\partial_y(v_1-v_2)\|_{V_T},
\end{eqnarray*}
for any $0\leq\tau<t\leq T$, and for any $v, v_1, v_2\in\mathscr K_{M,T}$, where $C$ is a positive constant depending only on $\gamma, L,$ and $\underline J$.
\end{proposition}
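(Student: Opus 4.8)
The plan is to estimate each of the three quantities by first differentiating in $y$ and then exploiting the structure established in Propositions \ref{PropA0}--\ref{PropA2} together with the bound on $\|\partial_y\mathscr Q(v)\|_{L^\infty(0,T;L^2)}$. For the first estimate, I would write
\begin{equation*}
\left(\frac{v_y}{\mathscr Q(v)}\right)_y=\frac{v_{yy}}{\mathscr Q(v)}-\frac{v_y\,\partial_y\mathscr Q(v)}{\mathscr Q(v)^2},
\end{equation*}
and control the two pieces separately. The first piece is handled directly by the lower bound $\mathscr Q(v)\geq\underline J/2$ and the fact that $\|v_{yy}\|_{L^2(Q_T)}\leq M$ for $v\in\mathscr K_{M,T}$. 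For the second piece I would use the $L^\infty$-in-time bound on $v_y$ from Proposition \ref{PropA0}(ii) together with $\|\partial_y\mathscr Q(v)\|_{L^\infty(0,T;L^2)}\leq\|J_0'\|_2+1$ from Proposition \ref{PropA1}(i); the product $\|v_y\|_{L^2(0,T;L^\infty)}\|\partial_y\mathscr Q(v)\|_{L^\infty(0,T;L^2)}$ gives the $L^2(Q_T)$ bound, and an extra application of H\"older in time upgrades it to $L^1(0,T;L^2)$ via the factor $T^{1/2}\leq 1$.

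For the second (difference) estimate, I would differentiate
\begin{equation*}
\frac{v_{1y}}{\mathscr Q(v_1)}-\frac{v_{2y}}{\mathscr Q(v_2)}
=\frac{(v_1-v_2)_y}{\mathscr Q(v_1)}-\frac{(\mathscr Q(v_1)-\mathscr Q(v_2))v_{2y}}{\mathscr Q(v_1)\mathscr Q(v_2)}
\end{equation*}
in $y$ and expand by the product rule. This produces several terms, each of which is a product of a difference factor (one of $(v_1-v_2)_{yy}$, $(v_1-v_2)_y$, $\mathscr Q(v_1)-\mathscr Q(v_2)$, or $\partial_y(\mathscr Q(v_1)-\mathscr Q(v_2))$) times a bounded factor. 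The key is to place the difference factor in whichever norm is controlled by $\|\partial_y(v_1-v_2)\|_{V_T}$ (using Proposition \ref{PropA0} applied to $v_1-v_2$, and the last two bounds of Proposition \ref{PropA1}(i)) and the remaining factor in a complementary norm, absorbing all $T$-powers using $T\leq 1$. The dependence on $\|J_0'\|_2$ enters precisely through the terms containing $\partial_y\mathscr Q(v_i)$, which is bounded by $\|J_0'\|_2+1$.

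For the third estimate, the plan is to differentiate the mean-value-theorem identity already derived in the proof of Proposition \ref{PropA2}, namely the representation of the exponential difference as
\begin{equation*}
-\gamma\,\exp\{\cdots\}\int_\tau^t\left(\frac{v_{1y}}{\mathscr Q(v_1)}-\frac{v_{2y}}{\mathscr Q(v_2)}\right)ds.
\end{equation*}
Differentiating in $y$ produces two groups of terms: one where $\partial_y$ hits the integral of the velocity-ratio difference, giving $\int_\tau^t\partial_y(\frac{v_{1y}}{\mathscr Q(v_1)}-\frac{v_{2y}}{\mathscr Q(v_2)})\,ds$, which is handled by the second estimate of this proposition together with a time-integration bound; and one where $\partial_y$ hits the exponential prefactor, producing an extra factor $\int_\tau^t\partial_y(\eta\frac{v_{1y}}{\mathscr Q(v_1)}+\cdots)\,ds$ multiplied by the $L^\infty$ smallness of the velocity-ratio difference from Proposition \ref{PropA2}. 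The main obstacle will be the second group: one must verify that the spatial derivative of the exponent stays bounded in the right norm, which again reduces to the first estimate of this proposition applied to each $v_i$ and hence carries the $\|J_0'\|_2$ dependence, while the remaining difference factor supplies the smallness $T^{3/4}\|\partial_y(v_1-v_2)\|_{V_T}$ that after a further $T^{1/4}$-type estimate yields the claimed $T^{1/2}$ power. Throughout, the bookkeeping of which factor goes into which of the three norms $L^2(Q_T)$, $L^2(0,T;L^\infty)$, $L^\infty(0,T;L^2)$ is the only delicate point, and all constants depend only on $\gamma,L,\underline J$ once $\|J_0'\|_2$ is displayed explicitly.
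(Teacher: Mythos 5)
Your first two estimates are correct and essentially reproduce the paper's own argument: the same decomposition $\big(\frac{v_y}{\mathscr Q(v)}\big)_y=\frac{v_{yy}}{\mathscr Q(v)}-\frac{v_y\,\partial_y\mathscr Q(v)}{\mathscr Q(v)^2}$, the same pairing $\|v_y\|_{L^2(0,T;L^\infty)}\|\partial_y\mathscr Q(v)\|_{L^\infty(0,T;L^2)}$ for the $L^2(Q_T)$ bound, and, for the difference estimate, the same scheme of putting each difference factor in the norm controlled by Proposition \ref{PropA0} (applied to $v_1-v_2$) and Proposition \ref{PropA1}(i), with $\|J_0'\|_2$ entering through $\partial_y\mathscr Q(v_i)$.

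The third estimate, however, contains a genuine gap. The mean value theorem step in the proof of Proposition \ref{PropA2} is a \emph{pointwise} statement: the intermediate value $\eta$ there depends on the point $(y,\tau,t)$. In Proposition \ref{PropA2} this is harmless, since only uniform-in-$\eta$ bounds on the exponential prefactor are used. But you propose to differentiate that identity in $y$, writing the resulting term as $\int_\tau^t\partial_y\big(\eta\frac{v_{1y}}{\mathscr Q(v_1)}+(1-\eta)\frac{v_{2y}}{\mathscr Q(v_2)}\big)ds$, i.e.\ treating $\eta$ as a constant under $\partial_y$. That step is not justified: $\eta=\eta(y,\tau,t)$ need not be differentiable in $y$, and even where it is, the chain rule would generate $\partial_y\eta$ terms over which you have no control. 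So the ``second group'' of terms in your plan is not legitimate as written.

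The repair is to abandon the mean value representation and differentiate an exact identity, which is what the paper does: with $A=-\gamma\int_\tau^t\frac{v_{1y}}{\mathscr Q(v_1)}ds$ and $B=-\gamma\int_\tau^t\frac{v_{2y}}{\mathscr Q(v_2)}ds$,
\[
\partial_y\big(e^A-e^B\big)=\big(e^A-e^B\big)\,\partial_yB+e^A\,\partial_y(A-B).
\]
The first term is bounded by $\|e^A-e^B\|_\infty$ (Proposition \ref{PropA2}) times $\gamma\big\|\big(\frac{v_{2y}}{\mathscr Q(v_2)}\big)_y\big\|_{L^1(0,T;L^2)}$ (your first estimate), giving $C(1+M+\|J_0'\|_2)T^{\frac34}\|\partial_y(v_1-v_2)\|_{V_T}$, where $MT^{\frac14}\le1$ absorbs the factor $M$ into $T^{\frac12}$; the second term is bounded by the uniform bound on $e^A$ times $T^{\frac12}\big\|\big(\frac{v_{1y}}{\mathscr Q(v_1)}-\frac{v_{2y}}{\mathscr Q(v_2)}\big)_y\big\|_{L^2(Q_T)}$ via H\"older and your second estimate. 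Alternatively, if you want to keep the mean value flavor, use the integral form $e^a-e^b=(a-b)\int_0^1e^{sa+(1-s)b}\,ds$, which is an exact identity and can be differentiated in $y$; it produces precisely the two groups of terms you describe, but with $s$ an integration variable rather than an uncontrolled function of $y$.
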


\begin{proof}
Note that $(\frac{v_{y}}{J})_y=\frac{v_{yy}}{J}-\frac{J_yv_{y}}{J^2}$, it follows from Proposition \ref{PropA0} and
Proposition \ref{PropA1} that
\begin{eqnarray*}
  \Big\|\Big(\frac{v_{y}}{\mathcal Q(v)}\Big)_y\Big\|_{L^2(Q_T)}&\leq&\Big\|\frac{v_{yy}}{\mathcal Q(v)}
\Big\|_{L^2(Q_T)}
  +\Big\|\frac{\partial_y\mathcal Q(v)}{\mathcal Q(v)^2}\Big\|_{L^\infty(0,T;L^2)}\|v_{y}\|_{L^2(0,T;L^\infty)}\\
  &\leq&\frac{2M}{\underline J}+\left(\frac{2}{\underline J}\right)^2(\|J_0'\|_2+1)C_1
  \leq C(1+M+\|J_0'\|_2).
\end{eqnarray*}
The $L^1(0,T; L^2)$ estimate for $(\frac{v_{y}}{\mathcal Q(v)})_y$ follows from the above inequality by simply
using the H\"older inequality.

For simplicity of notations, for $v_1, v_2\in\mathscr K_{M,T}$, we denote $\delta v=v_1-v_2$, $J_i=\mathscr Q(v_i)$, $i=1,2$, and $\delta J=J_1-J_2$.
By direct calculations
\begin{eqnarray*}
  &&\Big(\frac{v_{1y}}{J_1}-\frac{v_{2y}}{J_2}\Big)_y=\frac{v_{1yy}}{J_1}-\frac{J_{1y}v_{1y}}{J_1^2}-\left(\frac{v_{2yy}}{J_2}
  -\frac{J_{2y}v_{2y}}{J_2^2}\right)\\
  &=&\frac{\delta v_{yy}}{J_1}-\frac{\delta Jv_{2yy}}{J_1J_2}-\left(\frac{\delta J_yv_{1y}}{J_1^2}-J_{2y}v_{1y}\frac{(J_1+J_2)\delta J}{J_1^2J_2^2}+\frac{J_{2y}}{J_2^2}\delta v_y\right).
\end{eqnarray*}
Therefore, it follows from Proposition \ref{PropA0} and Proposition \ref{PropA1} that
\begin{eqnarray}
  &&\Big\|\Big(\frac{v_{1y}}{J_1}-\frac{v_{2y}}{J_2}\Big)_y\Big\|_{L^2(Q_T)}\nonumber\\
  &\leq&\frac{2}{\underline J}\|\delta v_{yy}\|_{L^2(Q_T)}+\left(\frac{2}{\underline J}\right)^2\|\delta J\|_{L^\infty(Q_T)}
  \|v_{2yy}\|_{L^2(Q_T)}\nonumber\\
  &&+\left(\frac{2}{\underline J}\right)^2\|\delta J_y\|_{L^\infty(0,T;L^2)}\|v_{1y}\|_{L^2(0,T;L^\infty)}\nonumber\\
  &&+4\bar J\left(\frac{2}{\underline J}\right)^4\|J_{2y}\|_{L^\infty(0,T;L^2)}
  \|\delta J\|_{L^\infty(Q_T)}\|v_{1y}\|_{L^2(0,T;L^\infty)}\nonumber\\
  &&+\left(\frac{2}{\underline J}\right)^2\|J_{2y}\|_{L^\infty(0,T;L^2)}\|\delta v_y\|_{L^2(0,T;L^\infty)}\nonumber\\
  &\leq& \frac{2}{\underline J}\|\delta v_y\|_{V_T}+\left(\frac{2}{\underline J}\right)^2MC_1T^{\frac34}\|\delta v_y\|_{V_T}
  \nonumber\\
  &&+\left(\frac{2}{\underline J}\right)^2C_1T^{\frac12}\|\delta v_y\|_{V_T}+4\bar J\left(\frac{2}{\underline J}\right)^4
  C_1^2(1+\|J_0'\|_2)T^{\frac34}\|\delta v_y\|_{V_T}\nonumber\\
  &&+\left(\frac{2}{\underline J}\right)^2
  (1+\|J_0'\|_2)C_1 T^{\frac14}\|\delta v_y\|_{V_T}\nonumber\\
  &\leq&C(1+\|J_0'\|_2)\|\delta v_y\|_{V_T}. \label{L3}
\end{eqnarray}
Straightforward computations yield
\begin{eqnarray*}
 &&\partial_y\left(\exp\left\{-\gamma\int_\tau^t\frac{v_{1y}}{J_1}ds\right\}-\exp\left\{-\gamma\int_\tau^t\frac{v_{2y}}{J_2}ds
\right\}\right)\\
 &=&-\gamma \left(\exp\left\{-\gamma\int_\tau^t\frac{v_{1y}}{J_1}ds\right\}-\exp\left\{-\gamma\int_\tau^t\frac{v_{2y}}{J_2}ds
\right\}\right)\int_\tau^t
 \left(\frac{v_{2y}}{J_2}\right)_yds \\
 &&-\gamma \exp\left\{-\gamma\int_\tau^t\frac{v_{1y}}{J_1}ds\right\}\int_\tau^t
 \left(\frac{v_{1y}}{J_1}-\frac{v_{2y}}{J_2}\right)_yds.
\end{eqnarray*}
Therefore, it follows from Propositions \ref{PropA0}, \ref{PropA2}, and \ref{PropA3} that
\begin{eqnarray*}
  &&\left\|\partial_y\left(\exp\left\{-\gamma\int_\tau^t\frac{v_{1y}}{J_1}ds\right\}-
\exp\left\{-\gamma\int_\tau^t\frac{v_{2y}}{J_2}ds\right\}\right)\right\|_2\\
  &\leq&\gamma\left\|\left(\frac{v_{2y}}{J_2}\right)_y\right\|_{L^1(0,T;L^2)}
\left\|\exp\left\{-\gamma\int_\tau^t\frac{v_{1y}}{J_1}ds\right\}-
\exp\left\{-\gamma\int_\tau^t\frac{v_{2y}}{J_2}ds\right\}\right\|_\infty\\
  &&+\gamma \exp\left\{\gamma\left\|\frac{v_{1y}}{J_1}\right\|_{L^1(0,T; L^\infty)}\right\}\left\|\left(\frac{v_{1y}}{J_1}-\frac{v_{2y}}{J_2}\right)_y\right\|_{L^1(0,T;L^2)}\\
  &\leq&\gamma e^{\frac{2\gamma C_1}{\underline J}}T^{\frac12}\left\|\left(\frac{v_{1y}}{J_1}-\frac{v_{2y}}{J_2}\right)_y\right\|_{L^2(Q_T)}+C(1+
M+\|J_0'\|_2)T^{\frac34}\|\delta v_y\|_{V_T}\\
  &\leq& C(1+\|J_0'\|_2)T^{\frac12}\|\delta v_y\|_{V_T},
\end{eqnarray*}
proving the conclusion.
\end{proof}

\subsection{Properties of $\mathscr R$}
\begin{proposition}
  \label{PropA4}
It holds that
$$
\|\partial_y(\mathscr R_1(v_1)-\mathscr R_1(v_2))\|_{L^2(Q_T)}\leq CT \|\partial_y(v_1-v_2)\|_{V_T},
$$
for any $v_1, v_2\in\mathscr K_{M,T}$, and for a positive constant $C$ depending only on $\gamma$, $L$, $\underline J$, $\|J_0'\|_2$, $\|\pi_0\|_\infty,$ and $\|\pi_0'\|_2$.
\end{proposition}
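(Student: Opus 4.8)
The plan is to reduce everything to the estimates already established in Propositions \ref{PropA2} and \ref{PropA3}, which carry out the analytic heavy lifting; the present statement is essentially a bookkeeping step combining them with the regularity of $\pi_0$. Recall that $\mathscr R_1(v_i)=\pi_0 E_i$ with $E_i=\exp\{-\gamma\int_0^t\frac{v_{iy}}{\mathscr Q(v_i)}ds\}$, so the $E_i$ are exactly the exponentials controlled in Propositions \ref{PropA2} and \ref{PropA3} upon specializing $\tau=0$. Since $\pi_0=\pi_0(y)$ carries no time dependence, the product rule gives
$$
\partial_y(\mathscr R_1(v_1)-\mathscr R_1(v_2))=\pi_0'(E_1-E_2)+\pi_0\,\partial_y(E_1-E_2),
$$
and I would estimate the two resulting terms separately, placing $\pi_0'$ in $L^2$ against the first factor and $\pi_0$ in $L^\infty$ against the second.

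For the first term I would factor out the spatial sup pointwise in time: for each $t$, $\|\pi_0'(E_1-E_2)\|_2\leq\|\pi_0'\|_2\,\|E_1-E_2\|_\infty$. Proposition \ref{PropA2} (with $\tau=0$) bounds $\|E_1-E_2\|_\infty$ by $CT^{3/4}\|\partial_y(v_1-v_2)\|_{V_T}$ uniformly in $t$, so squaring and integrating over $(0,T)$ contributes an extra factor $T^{1/2}$, yielding the total power $T^{5/4}$. Because $T\leq1$ by (\ref{H}), we have $T^{5/4}\leq T$, so this contribution is absorbed into the desired bound $CT\|\partial_y(v_1-v_2)\|_{V_T}$.

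For the second term I would use $\|\pi_0\,\partial_y(E_1-E_2)\|_2\leq\|\pi_0\|_\infty\,\|\partial_y(E_1-E_2)\|_2$ and invoke Proposition \ref{PropA3} (again with $\tau=0$), which controls $\|\partial_y(E_1-E_2)\|_2$ by $C(1+\|J_0'\|_2)T^{1/2}\|\partial_y(v_1-v_2)\|_{V_T}$ uniformly in $t$; the time integration over $(0,T)$ then contributes another $T^{1/2}$, giving precisely the factor $T$. Adding the two contributions proves the claim, with $C$ depending on $\gamma,L,\underline J,\|J_0'\|_2,\|\pi_0\|_\infty$, and $\|\pi_0'\|_2$ as asserted. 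I do not anticipate any genuine obstacle here: the only point requiring attention is matching the powers of $T$, and the mild gain $T^{5/4}\leq T$ afforded by $T\leq1$ is exactly what renders the $\pi_0'$ term harmless, while all the delicate nonlinear differences were already resolved upstream.
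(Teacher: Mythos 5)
Your proof is correct and takes essentially the same route as the paper: the identical product-rule decomposition $\pi_0'(E_1-E_2)+\pi_0\,\partial_y(E_1-E_2)$, with Proposition \ref{PropA2} placed against $\|\pi_0'\|_2$ and Proposition \ref{PropA3} against $\|\pi_0\|_\infty$, and H\"older in time supplying the extra $T^{1/2}$. Your bookkeeping of the powers of $T$ (yielding $T^{5/4}+T\leq 2T$ for $T\leq 1$) is in fact slightly more careful than the paper's final printed line, which reads $\|\pi_0\|_\infty T^{1/2}$ where the preceding steps actually give $\|\pi_0\|_\infty T$.
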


\begin{proof}
  For simplicity of notations, for $v_1, v_2\in\mathscr K_{M,T}$, we denote $\delta v=v_1-v_2$, $J_i=\mathscr Q(v_i)$, $i=1,2$, and $\delta J=J_1-J_2$. Note that
  \begin{eqnarray*}
  \partial_y(\mathscr R_1(v_1)-\mathscr R_1(v_2))&=&\pi_0\partial_y\left(\exp\left\{-\gamma\int_0^t\frac{v_{1y}}{J_1}ds\right\}
  -\exp\left\{-\gamma\int_0^t\frac{v_{2y}}{J_2}ds\right\}\right)\\
  &&+\left(\exp\left\{-\gamma\int_0^t\frac{v_{1y}}{J_1}ds\right\}
  -\exp\left\{-\gamma\int_0^t\frac{v_{2y}}{J_2}ds\right\}\right)
  \pi_0'.
  \end{eqnarray*}
  It follows from Proposition \ref{PropA2} and Proposition \ref{PropA3} that
  \begin{eqnarray*}
  &&\|\partial_y(\mathscr R_1(v_1)-\mathscr R_1(v_2))\|_{L^2(Q_T)}\\
  &\leq&\left\|\exp\left\{-\gamma\int_0^t\frac{v_{1y}}{J_1}ds\right\}
  -\exp\left\{-\gamma\int_0^t\frac{v_{2y}}{J_2}ds\right\}\right\|_{L^2(0,T;L^\infty)}
  \|\pi_0'\|_2\\
  &&+\|\pi_0\|_\infty\left\|\partial_y\left(\exp\left\{-\gamma\int_0^t\frac{v_{1y}}{J_1}ds\right\}
  -\exp\left\{-\gamma\int_0^t\frac{v_{2y}}{J_2}ds\right\}
\right)\right\|_{L^2(Q_T)}\\
  &\leq&T^{\frac12}\left\|
\exp\left\{-\gamma\int_0^t\frac{v_{1y}}{J_1}ds\right\}
  -\exp\left\{-\gamma\int_0^t\frac{v_{2y}}{J_2}ds\right\}\right\|_{L^\infty(Q_T)}
  \|\pi_0'\|_2 \\
  &&+\|\pi_0\|_\infty T^{\frac12}\left\|\partial_y\left(\exp\left\{-\gamma\int_0^t\frac{v_{1y}}{J_1}ds\right\}
  -\exp\left\{-\gamma\int_0^t\frac{v_{2y}}{J_2}ds\right\}\right)\right\|_{L^\infty(0,T;L^2)}\\
  &\leq& C(\|\pi_0'\|_2T^{\frac54}+\|\pi_0\|_\infty T^{\frac12})\|\delta v_y\|_{V_T},
  \end{eqnarray*}
  proving the conclusion.
\end{proof}

\begin{proposition}
  \label{PropA5}
  It holds that
  $$
  \|\partial_y(\mathscr R_2(v_1)-\mathscr R_2(v_2))\|_{L^2(Q_T)}\leq CT^{\frac12}\|\partial_y(v_1-v_2)\|_{V_T},
  $$
 for any $v_1, v_2\in\mathscr K_{M,T}$, and for a positive constant $C$ depending only on $\gamma, L, \underline J,$ and $ \|J_0'\|_2$.
\end{proposition}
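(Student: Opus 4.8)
The plan is to work directly with the difference of the two integrands defining $\mathscr R_2$, differentiate in $y$, and then estimate the resulting pieces using the already-established Propositions \ref{PropA0}, \ref{PropA2}, and \ref{PropA3}. For $v_1,v_2\in\mathscr K_{M,T}$ I set $\delta v=v_1-v_2$, write $w_i=v_{iy}/\mathscr Q(v_i)$ and $E_i(\tau,t)=\exp\{-\gamma\int_\tau^tw_i\,ds\}$, so that $\mathscr R_2(v_i)=\int_0^tw_i^2E_i\,d\tau$. The starting point is the algebraic decomposition
$$
w_1^2E_1-w_2^2E_2=(w_1^2-w_2^2)E_1+w_2^2(E_1-E_2),
$$
which, after applying $\partial_y$, splits into four terms: $\partial_y(w_1^2-w_2^2)E_1$, $(w_1^2-w_2^2)\partial_yE_1$, $\partial_y(w_2^2)(E_1-E_2)$, and $w_2^2\partial_y(E_1-E_2)$.

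Next, since $\partial_y(\mathscr R_2(v_1)-\mathscr R_2(v_2))(\cdot,t)=\int_0^t\partial_y(w_1^2E_1-w_2^2E_2)\,d\tau$, I would apply Minkowski's integral inequality to pull the $\tau$-integral outside the $L^2_y$-norm, reducing matters to bounding the $L^2_y$-norm of each of the four terms at fixed $\tau,t$, then integrating in $\tau$ and finally taking the outer $L^2$-norm in $t$. The required inputs are all available: $\|w_i\|_{L^2(0,T;L^\infty)}+\|w_i\|_{L^1(0,T;L^\infty)}\le C$ and $\|(w_i)_y\|_{L^1(0,T;L^2)}\le C(1+M+\|J_0'\|_2)$ from Propositions \ref{PropA0} and \ref{PropA3}; the difference bounds $\|w_1-w_2\|_{L^2(0,T;L^\infty)}\le CT^{1/4}\|\delta v_y\|_{V_T}$ and $\|E_1-E_2\|_\infty\le CT^{3/4}\|\delta v_y\|_{V_T}$ from Proposition \ref{PropA2}; and $\|(w_1-w_2)_y\|_{L^2(Q_T)}\le C(1+\|J_0'\|_2)\|\delta v_y\|_{V_T}$ together with $\|\partial_y(E_1-E_2)\|_{L^\infty(0,T;L^2)}\le C(1+\|J_0'\|_2)T^{1/2}\|\delta v_y\|_{V_T}$ from Proposition \ref{PropA3}. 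One also uses the pointwise bounds $|E_i|\le C$, coming from $\|w_i\|_{L^1(0,T;L^\infty)}\le C$, and $\|\partial_yE_i\|_2\le\gamma\|E_i\|_\infty\int_\tau^t\|(w_i)_y\|_2\,ds\le C(1+M+\|J_0'\|_2)$. Each of the four $\tau$-integrals is then controlled by Hölder's inequality in $\tau$, yielding a bound that is uniform in $t$; the final factor $T^{1/2}$ arises from taking the outer $L^2$-norm over $(0,T)$ of this $t$-uniform quantity.

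The delicate point, and the one I expect to be the main obstacle, is ensuring that the constant depends only on $\gamma,L,\underline J,\|J_0'\|_2$ and not on $M$. The parameter $M$ enters through $\|(w_i)_y\|_{L^1(0,T;L^2)}$, hence through $\partial_yE_i$, and it appears in three of the four terms. In each occurrence it is multiplied by a positive power of $T$—a $T^{1/4}$ supplied by $\|w_1-w_2\|_{L^2(0,T;L^\infty)}$ or a $T^{3/4}$ supplied by $\|E_1-E_2\|_\infty$—so one absorbs it using the running assumptions $MT^{1/4}\le1$ and $T\le1$ from (\ref{H}), via $T^{1/4}(1+M+\|J_0'\|_2)\le C(1+\|J_0'\|_2)$ and $T^{3/4}(1+M+\|J_0'\|_2)\le C(1+\|J_0'\|_2)T^{1/2}$. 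After this bookkeeping the first two terms contribute $CT^{1/2}(1+\|J_0'\|_2)\|\delta v_y\|_{V_T}$ and the last two even $CT(1+\|J_0'\|_2)\|\delta v_y\|_{V_T}$, so summing gives exactly the claimed estimate; the remaining computations are routine.
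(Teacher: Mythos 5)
Your proposal is correct and follows essentially the same route as the paper: the splitting $w_1^2E_1-w_2^2E_2=(w_1^2-w_2^2)E_1+w_2^2(E_1-E_2)$, differentiated in $y$, is exactly the paper's five-term decomposition $I_1+\dots+I_5$ (your first term is $I_1+I_2$, and your remaining three are $I_3$, $I_5$, $I_4$), estimated with the same inputs from Propositions \ref{PropA0}, \ref{PropA2}, and \ref{PropA3}. Your handling of the $M$-dependence through $MT^{1/4}\le 1$, $T\le 1$, and the extraction of the final $T^{1/2}$ from the $L^2$-in-$t$ norm of a $t$-uniform bound, also coincide with the paper's argument.
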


\begin{proof}
    For simplicity of notations, for $v_1, v_2\in\mathscr K_{M,T}$, we denote $\delta v=v_1-v_2$, $J_i=\mathscr Q(v_i)$, $i=1,2$, and $\delta J=J_1-J_2$. Straightforward calculations yield
  \begin{eqnarray*}
    &&\partial_y(\mathscr R_2(v_1)-\mathscr R_2(v_2))\\
    &=&\int_0^te^{-\gamma\int_\tau^t\frac{v_{1y}}{J_1}ds}\Big(\frac{v_{1y}}{J_1}
    +\frac{v_{2y}}{J_2}\Big)\Big(\frac{v_{1y}}{J_1}
    -\frac{v_{2y}}{J_2}\Big)_yd\tau\\
    &&+\int_0^te^{-\gamma\int_\tau^t\frac{v_{1y}}{J_1}ds}\Big(\frac{v_{1y}}{J_1}
    +\frac{v_{2y}}{J_2}\Big)_y\Big(\frac{v_{1y}}{J_1}
    -\frac{v_{2y}}{J_2}\Big)d\tau\\
    &&-\gamma \int_0^te^{-\gamma\int_\tau^t\frac{v_{1y}}{J_1}ds}\int_\tau^t\Big(\frac{v_{1y}}{J_1}\Big)_yds\Big(\frac{v_{1y}}{J_1}
    +\frac{v_{2y}}{J_2}\Big)\Big(\frac{v_{1y}}{J_1}
    -\frac{v_{2y}}{J_2}\Big)d\tau\\
    &&+\int_0^t\partial_y\left(e^{-\gamma\int_\tau^t\frac{v_{1y}}{J_1}ds}-e^{-\gamma\int_\tau^t\frac{v_{2y}}{J_2}ds}\right)
    \Big(\frac{v_{2y}}{J_2}\Big)^2ds\\
    &&+2\int_0^t\left(e^{-\gamma\int_\tau^t\frac{v_{1y}}{J_1}ds}-e^{-\gamma\int_\tau^t\frac{v_{2y}}{J_2}ds}\right)
    \frac{v_{2y}}{J_2}\Big(\frac{v_{2y}}{J_2}\Big)_yds\\
    &=:&I_1+I_2+I_3+I_4+I_5.
  \end{eqnarray*}
  Estimates for $I_i, i=1, 2, 3, 4, 5$, are give as follows.
  By Proposition \ref{PropA0} and Proposition \ref{PropA3}
  \begin{eqnarray*}
    \|I_1\|_2&\leq&e^{\gamma\|\frac{v_{1y}}{J_1}\|_{L^1(0,T;L^\infty)}}\int_0^T\Big(\Big\|\frac{v_{1y}}{J_1}\Big\|_\infty
    +\|\frac{v_{2y}}{J_2}\Big\|_\infty\Big)\Big\|\Big(\frac{v_{1y}}{J_1}-\frac{v_{2y}}{J_2}\Big)_y\Big\|_2d\tau\\
    &\leq&e^{\frac{2\gamma C_1}{\underline J}}\Big(\Big\|\frac{v_{1y}}{J_1}\Big\|_{L^2(0,T;L^\infty)}
    +\Big\|\frac{v_{2y}}{J_2}\Big\|_{L^2(0,T;L^\infty)}\Big)
    \Big\|\Big(\frac{v_{1y}}{J_1}-\frac{v_{2y}}{J_2}\Big)_y\Big\|_{L^2(Q_T)}\\
    &\leq& C\|\delta v_y\|_{V_T}.
  \end{eqnarray*}
  Similarly, it follows from Propositions \ref{PropA0}, \ref{PropA2}, and \ref{PropA3} that
  \begin{eqnarray*}
    \|I_2\|_2&\leq&e^{\frac{2\gamma C_1}{\underline J}}\Big(\Big\|\Big(\frac{v_{1y}}{J_1}\Big)_y\Big\|_{L^2(Q_T)}
    +\Big\|\Big(\frac{v_{2y}}{J_2}\Big)_y\Big\|_{L^2(Q_T)}\Big)
    \Big\| \frac{v_{1y}}{J_1}-\frac{v_{2y}}{J_2} \Big\|_{L^2(0,T;L^\infty)}\\
    &\leq& Ce^{\frac{2\gamma C_1}{\underline J}}(1+M+\|J_0'\|_2)T^{\frac14}\|\delta v_y\|_{V_T}\leq C\|\delta v_y\|_{V_T},
  \end{eqnarray*}
  where we have used (\ref{H}). It follows from Propositions \ref{PropA0}--\ref{PropA3} that
  \begin{eqnarray*}
    \|I_3\|_2
    &\leq&\gamma e^{\frac{2\gamma C_1}{\underline J}}\int_0^T
    \left(\int_0^T\Big\|\Big(\frac{v_{1y}}{J_1}\Big)_y\Big\|_2 ds\right)\Big\|\frac{v_{1y}}{J_1}+\frac{v_{2y}}{J_2}\Big\|_\infty
    \Big\|\frac{v_{1y}}{J_1}-\frac{v_{2y}}{J_2}\Big\|_\infty d\tau\\
    &\leq&C\Big\|\Big(\frac{v_{1y}}{J_1}\Big)_y\Big\|_{L^1(0,T;L^2)}
    \Big\|\frac{v_{1y}}{J_1}+\frac{v_{2y}}{J_2}\Big\|_{L^2(0,T;L^\infty)}
    \Big\|\frac{v_{1y}}{J_1}-\frac{v_{2y}}{J_2}\Big\|_{L^2(0,T;L^\infty)}\\
    &\leq&C(1+M+\|J_0'\|_2)T^{\frac14}\|\delta v_y\|_{V_T}\leq C\|\delta v_y\|_{V_T},
  \end{eqnarray*}
  where (\ref{H}) has been used. By Propositions \ref{PropA0}--\ref{PropA3}, one deduces
  \begin{eqnarray*}
    \|I_4\|_2&\leq&\int_0^t\Big\| \partial_y\left(e^{-\gamma\int_\tau^t\frac{v_{1y}}{J_1}ds}-e^{-\gamma\int_\tau^t\frac{v_{2y}}{J_2}ds}\right)
   \Big\|_2\Big\|\frac{v_{2y}}{J_2}\Big\|_\infty^2ds\\
   &\leq&C(1+\|J_0'\|_2)T^{\frac12}\|\delta v_y\|_{V_T}\int_0^T\Big\|\frac{v_{2y}}{J_2}\Big\|_\infty^2ds
   \leq C\|\delta v_y\|_{V_T},
  \end{eqnarray*}
  and
  \begin{eqnarray*}
    \|I_5\|_2&\leq&2\int_0^t\Big\|e^{-\gamma\int_\tau^t\frac{v_{1y}}{J_1}ds}-e^{-\gamma\int_\tau^t\frac{v_{2y}}{J_2}ds}
   \Big\|_\infty\Big\|\frac{v_{2y}}{J_2}\Big\|_\infty \Big\|\Big(\frac{v_{2y}}{J_2}\Big)_y\Big\|_2ds\\
   &\leq&CT^{\frac34}\|\delta v_y\|_{V_T} \Big\|\frac{v_{2y}}{J_2}\Big\|_{L^2(0,T;L^\infty)} \Big\|\Big(\frac{v_{2y}}{J_2}\Big)_y\Big\|_{L^2(Q_T)}\\
   &\leq&CT^{\frac34}(M+\|J_0'\|_2)\|\delta v_y\|_{V_T} \leq C\|\delta v_y\|_{V_T},
  \end{eqnarray*}
  where $MT^{\frac14}$ has been used. Therefore, we have
  \begin{eqnarray*}
    \|\partial_y(\mathscr R_2(v_1)-\mathscr R_2(v_2))\|_{L^2(Q_T)}\leq\sum_{i=1}^5\|I_i\|_{L^2(Q_T)}\leq CT^{\frac12}
    \|\delta v_y\|_{V_T},
  \end{eqnarray*}
  proving the conclusion.
\end{proof}

\begin{proposition}
  \label{PropA6}
  For any $v\in\mathscr K_{M,T}$, it holds that
  \begin{equation*}
    \|\partial_y\mathscr R(v)\|_{L^2(Q_T)}\leq C,
  \end{equation*}
  for a positive constant $C$ depending only on $\gamma, \mu, L, \underline J, \|J_0'\|_2, \|\pi_0\|_\infty,$ and $\|\pi_0'\|_2$.
\end{proposition}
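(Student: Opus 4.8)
The plan is to exploit the decomposition $\mathscr R(v)=\mathscr R_1(v)+\mu(\gamma-1)\mathscr R_2(v)$ together with the Lipschitz-type bounds already established in Propositions \ref{PropA4} and \ref{PropA5}, by specializing them to the reference point $v_2=0$. Since $\|\partial_y 0\|_{V_T}=0\le M$, the zero function lies in $\mathscr K_{M,T}$, so those propositions apply with $v_1=v$ and $v_2=0$. The only extra ingredient is to identify the values $\mathscr R_1(0)$ and $\mathscr R_2(0)$: plugging $v=0$ into the explicit formulas gives $v_y\equiv0$, so the exponential factors reduce to $1$ and the quadratic integrand vanishes, whence $\mathscr R_1(0)=\pi_0$ and $\mathscr R_2(0)=0$. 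In particular $\partial_y\mathscr R_2(0)=0$ and $\partial_y\mathscr R_1(0)=\pi_0'$.

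First I would treat $\mathscr R_2$. Writing $\partial_y\mathscr R_2(v)=\partial_y(\mathscr R_2(v)-\mathscr R_2(0))$ and invoking Proposition \ref{PropA5} with $v_2=0$,
$$
\|\partial_y\mathscr R_2(v)\|_{L^2(Q_T)}\le CT^{\frac12}\|\partial_y v\|_{V_T}\le CMT^{\frac12},
$$
where $C$ depends only on $\gamma,L,\underline J,\|J_0'\|_2$. The $M$-dependence is then removed using the standing hypotheses $MT^{\frac14}\le1$ and $T\le1$: indeed $MT^{\frac12}=(MT^{\frac14})T^{\frac14}\le T^{\frac14}\le1$, so $\|\partial_y\mathscr R_2(v)\|_{L^2(Q_T)}\le C$.

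Next I would treat $\mathscr R_1$ by the triangle inequality, $\partial_y\mathscr R_1(v)=\partial_y(\mathscr R_1(v)-\mathscr R_1(0))+\pi_0'$. Proposition \ref{PropA4} with $v_2=0$ gives $\|\partial_y(\mathscr R_1(v)-\mathscr R_1(0))\|_{L^2(Q_T)}\le CT\|\partial_y v\|_{V_T}\le CMT$, and since $MT=(MT^{\frac14})T^{\frac34}\le T^{\frac34}\le1$ this is bounded by $C$, with $C$ depending only on $\gamma,L,\underline J,\|J_0'\|_2,\|\pi_0\|_\infty,\|\pi_0'\|_2$. For the remaining term, $\pi_0'$ is independent of $t$, so $\|\pi_0'\|_{L^2(Q_T)}=T^{\frac12}\|\pi_0'\|_2\le\|\pi_0'\|_2$. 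Adding the two contributions, and then combining with the $\mathscr R_2$ bound through $\mathscr R(v)=\mathscr R_1(v)+\mu(\gamma-1)\mathscr R_2(v)$, yields the claim; the factor $\mu(\gamma-1)$ accounts for the appearance of $\mu$ in the final constant.

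I expect no serious obstacle: the heavy lifting was done in Propositions \ref{PropA4}--\ref{PropA5}, and the present bound is essentially their evaluation at $v_2=0$. The one point requiring care is the bookkeeping of the $M$-dependence — the right-hand sides of those propositions carry the factor $\|\partial_y v\|_{V_T}\le M$, which is a priori unbounded, but the accompanying positive powers of $T$ together with $MT^{\frac14}\le1$ absorb it cleanly. An equivalent, if more laborious, route would be to differentiate the explicit formulas for $\mathscr R_1$ and $\mathscr R_2$ directly and estimate the resulting time integrals via Propositions \ref{PropA0}, \ref{PropA1}, and \ref{PropA3}; there the same $M$-absorption reappears term by term, so the $v_2=0$ specialization is the more economical path.
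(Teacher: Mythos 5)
Your proposal is correct and is essentially the paper's own proof: the paper likewise takes the reference point $v_2=0$, notes $\mathscr R(0)=\pi_0$, and applies Propositions \ref{PropA4} and \ref{PropA5} together with the triangle inequality and the conditions $MT^{\frac14}\leq1$, $T\leq1$ from (\ref{H}) to absorb the factor $M$. Your accounting of where $\mu$ and the dependence on $\|\pi_0\|_\infty$, $\|\pi_0'\|_2$ enter matches the paper as well.
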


\begin{proof}
  Note that $\mathscr R(0)=\pi_0$, it follows from Proposition \ref{PropA4} and Proposition \ref{PropA5} that
  \begin{eqnarray*}
\|\partial_y\mathscr R(v)\|_{L^2(Q_T)}&\leq&\|\partial_y\mathscr R(0)\|_{L^2(Q_T)}+\|\partial_y(\mathscr R(v)-\mathscr R(0))\|_{L^2(Q_T)}\\
    &\leq&\|\pi_0'\|_{L^2(Q_T)}+CT^{\frac12}\|v\|_{V_T}\leq T^{\frac12}\|\pi_0'\|_2+CMT^{\frac12}\leq C,
  \end{eqnarray*}
  where (\ref{H}) has been used. This proves the conclusion.
\end{proof}

\subsection{Properties of $\mathscr F$}
\begin{proposition}
  \label{PropA7}
  For any $v\in\mathscr K_{M,T}$, it holds that
  $$
  \|\partial_y\mathscr F(v)\|_{V_T}\leq C,
  $$
  for a positive constant $C$ depending only on $\gamma, \mu, L, \underline\varrho, \bar\varrho,
  \underline J, \|J_0'\|_2, \|\pi_0\|_\infty, \|\pi_0'\|_2$, and $\|v_0'\|_2$.
\end{proposition}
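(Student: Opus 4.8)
The plan is to derive the estimate by a single energy argument: test the linear parabolic equation (\ref{L6}) against $-V_{yy}$, where throughout I write $V=\mathscr F(v)$, $J=\mathscr Q(v)$, and $\pi=\mathscr R(v)$. Recalling the definition of $\|\cdot\|_{V_T}$, the goal is exactly to bound $\sup_{0\le t\le T}\|V_y\|_2^2+\int_0^T\|V_{yy}\|_2^2\,dt$. Denote the source term by $f:=-\big(\mu\frac{J_yv_y}{J^2\varrho_0}+\frac{\pi_y}{\varrho_0}\big)$, so that (\ref{L6}) reads $V_t-\mu\frac{V_{yy}}{J\varrho_0}=f$.

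First I would multiply this equation by $-V_{yy}$ and integrate over $(0,L)$. Since $V(0,t)=V(L,t)=0$ forces $V_t(0,t)=V_t(L,t)=0$, integration by parts in the first term gives $-\int_0^LV_tV_{yy}\,dy=\tfrac12\frac{d}{dt}\|V_y\|_2^2$, leading to
$$
\frac12\frac{d}{dt}\|V_y\|_2^2+\mu\int_0^L\frac{V_{yy}^2}{J\varrho_0}\,dy=-\int_0^LfV_{yy}\,dy.
$$
By part (ii) of Proposition \ref{PropA1} one has $\frac{\underline J}{2}\le J\le 2\bar J$ on $Q_T$, and together with $\underline\varrho\le\varrho_0\le\bar\varrho$ this makes the second term coercive, namely $\mu\int_0^L\frac{V_{yy}^2}{J\varrho_0}\,dy\ge\frac{\mu}{2\bar J\bar\varrho}\|V_{yy}\|_2^2$. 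Applying Cauchy--Schwarz and Young to the right-hand side, $|\int_0^LfV_{yy}\,dy|\le\varepsilon\|V_{yy}\|_2^2+C_\varepsilon\|f\|_2^2$, and choosing $\varepsilon$ small enough to absorb the $V_{yy}$ term into the left-hand side, I obtain $\frac{d}{dt}\|V_y\|_2^2+\frac{\mu}{2\bar J\bar\varrho}\|V_{yy}\|_2^2\le C\|f\|_2^2$.

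The remaining point is to control $\int_0^T\|f\|_2^2\,dt$, and this is exactly where the earlier propositions enter. Using $\varrho_0\ge\underline\varrho$ and $J\ge\underline J/2$, I split $\|f\|_2\le\frac{4\mu}{\underline\varrho\,\underline J^2}\|J_y\|_2\|v_y\|_\infty+\frac1{\underline\varrho}\|\pi_y\|_2$. For the first contribution, Proposition \ref{PropA1}(i) gives $\sup_{0\le t\le T}\|J_y\|_2\le\|J_0'\|_2+1$, while Proposition \ref{PropA0}(ii) gives $\|v_y\|_{L^2(0,T;L^\infty)}\le C_1$; hence $\int_0^T\|J_y\|_2^2\|v_y\|_\infty^2\,dt\le(\|J_0'\|_2+1)^2C_1^2\le C$. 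For the second contribution, Proposition \ref{PropA6} gives directly $\int_0^T\|\pi_y\|_2^2\,dt=\|\partial_y\mathscr R(v)\|_{L^2(Q_T)}^2\le C$. Thus $\int_0^T\|f\|_2^2\,dt\le C$. Integrating the differential inequality over $(0,t)$ and noting $V_y|_{t=0}=v_0'$, so that $\|V_y(0)\|_2=\|v_0'\|_2$, yields $\sup_{0\le t\le T}\|V_y\|_2^2+\int_0^T\|V_{yy}\|_2^2\,dt\le C(\|v_0'\|_2^2+1)$, which is the claim.

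The computation itself is routine once the bounds of Propositions \ref{PropA0}, \ref{PropA1}, and \ref{PropA6} are in hand; the only point requiring care is the rigorous justification of testing against $-V_{yy}$, since the coefficient $\frac1{J\varrho_0}$ is merely bounded and measurable in time with $H^1$ spatial regularity. I would handle this in the standard way, either by performing the estimate on a Galerkin approximation and passing to the limit, or by invoking the $L^2(0,T;H^2)\cap H^1(0,T;L^2)$ regularity of the linear problem (\ref{L6}) guaranteed by the two-sided bounds on $J\varrho_0$, which legitimizes the identity $\frac{d}{dt}\|V_y\|_2^2=2\int_0^LV_{ty}V_y\,dy$ for a.e.\ $t$.
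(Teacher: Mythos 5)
Your proof is correct and follows essentially the same route as the paper: the paper likewise tests (\ref{L6}) against (a $\varrho_0$-weighted version of) $-V_{yy}$, uses $\underline J/2\leq\mathscr Q(v)\leq 2\bar J$ together with the bounds on $\varrho_0$ for coercivity, absorbs the cross term by Young's inequality, and controls the source terms through Propositions \ref{PropA0}, \ref{PropA1}, and \ref{PropA6}. The only differences are cosmetic: your multiplier is $-V_{yy}$ rather than $-V_{yy}/\varrho_0$, and you additionally spell out the justification of the testing step, which the paper leaves implicit.
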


\begin{proof}
  Denote $J=\mathscr Q(v), \pi=\mathscr R(v)$, and
  $V=\mathscr F(v)$. Testing (\ref{L6}) with $-\frac{V_{yy}}{\varrho_0}$ and noticing
  $\frac{\underline J}{2}\leq J\leq 2\bar J$,
  one deduces
  \begin{eqnarray*}
    \frac{d}{dt}\|V_y\|_2^2+\mu\left\|\frac{V_{yy}}{\sqrt{J\varrho_0}}\right\|_2^2&=&\int_0^L\left(\frac{\pi_y}{\varrho_0}
    +\mu\frac{J_yv_y}{J\varrho_0}\right)V_{yy}dy \\
    &\leq& \varepsilon\|V_{yy}\|_2^2+C_\varepsilon(\|\pi_y\|_2^2+\|v_y\|_\infty^2\|J_y\|_2^2),
  \end{eqnarray*}
  for any positive $\varepsilon$, which, choosing $\varepsilon$ sufficiently small and applying Propositions \ref{PropA0}, \ref{PropA1}, and \ref{PropA6},
  gives
  \begin{eqnarray*}
    \sup_{0\leq t\leq T}\|V_y\|_2^2+\int_0^T\|V_{yy}\|_2^2dt
    &\leq&  C(\|v_0'\|_2^2+\|\pi_y\|_{L^2(Q_T)}^2+\|J_y\|_{L^\infty(0,T;L^2)}^2\|v_y\|_{L^2(0,T;L^\infty)}^2)\\
    &\leq&  C,
  \end{eqnarray*}
  proving the conclusion.
\end{proof}

\begin{proposition}
  \label{PropA8}
  It holds that
  $$
  \|\partial_y(\mathscr F(v_1)-\mathscr F(v_2))\|_{V_T}\leq CT^{\frac14}\|\partial_y(v_1-v_2)\|_{V_T},\quad\forall v_1, v_2\in\mathscr K_{M,T},
  $$
  for a positive constant $C$ depending only on $\gamma, \mu, L, \underline\varrho, \bar\varrho,
  \underline J, \|J_0'\|_2, \|\pi_0\|_\infty, \|\pi_0'\|_2$, and $\|v_0'\|_2$.
\end{proposition}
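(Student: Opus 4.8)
The plan is to derive the linear parabolic equation satisfied by the difference $\delta V:=\mathscr F(v_1)-\mathscr F(v_2)$ and run the same $H^1$-type energy estimate against $-\partial_{yy}\delta V$ that was used in Proposition~\ref{PropA7}, while tracking the positive powers of $T$ supplied by the difference estimates for $\mathscr Q$ and $\mathscr R$ already established. Writing $V_i=\mathscr F(v_i)$, $J_i=\mathscr Q(v_i)$, $\pi_i=\mathscr R(v_i)$, and $\delta v,\delta J,\delta\pi$ as agreed, I subtract the two copies of (\ref{L6}) and rewrite the principal part as
$$
\mu\frac{\partial_{yy}V_1}{J_1\varrho_0}-\mu\frac{\partial_{yy}V_2}{J_2\varrho_0}
=\mu\frac{\partial_{yy}\delta V}{J_1\varrho_0}-\mu\frac{\partial_{yy}V_2\,\delta J}{J_1J_2\varrho_0},
$$
so that $\delta V$ solves $\partial_t\delta V-\mu\frac{\partial_{yy}\delta V}{J_1\varrho_0}=F$ with $\delta V|_{t=0}=0$ and $\delta V|_{y=0,L}=0$, where
$$
F:=\mu\frac{\partial_{yy}V_2\,\delta J}{J_1J_2\varrho_0}
-\mu\left(\frac{J_{1y}v_{1y}}{J_1^2\varrho_0}-\frac{J_{2y}v_{2y}}{J_2^2\varrho_0}\right)
-\frac{\delta\pi_y}{\varrho_0}.
$$

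Testing this equation with $-\partial_{yy}\delta V$ and integrating by parts (the boundary terms vanish since $\delta V=0$ at $y=0,L$ for all $t$) yields
$$
\frac12\frac{d}{dt}\|\partial_y\delta V\|_2^2+\mu\left\|\frac{\partial_{yy}\delta V}{\sqrt{J_1\varrho_0}}\right\|_2^2=-\int_0^LF\,\partial_{yy}\delta V\,dy.
$$
Using $\frac{\underline J}{2}\le J_1\le 2\bar J$ from Proposition~\ref{PropA1} and $\underline\varrho\le\varrho_0\le\bar\varrho$, the dissipation is coercive in $\|\partial_{yy}\delta V\|_2^2$; bounding the right-hand side by $\varepsilon\|\partial_{yy}\delta V\|_2^2+C_\varepsilon\|F\|_2^2$, absorbing the first term, and integrating in time (with $\delta V|_{t=0}=0$) gives
$$
\|\partial_y\delta V\|_{V_T}^2\le C\int_0^T\|F\|_2^2\,dt=C\|F\|_{L^2(Q_T)}^2.
$$
It therefore suffices to prove $\|F\|_{L^2(Q_T)}\le CT^{1/4}\|\partial_y\delta v\|_{V_T}$.

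The last term of $F$ is controlled directly by Propositions~\ref{PropA4} and~\ref{PropA5}, which give $\|\delta\pi_y\|_{L^2(Q_T)}\le C(T+T^{1/2})\|\partial_y\delta v\|_{V_T}$. The first term is bounded by $C\|\delta J\|_{L^\infty(Q_T)}\|\partial_{yy}V_2\|_{L^2(Q_T)}$, where $\|\partial_{yy}V_2\|_{L^2(Q_T)}\le C$ by Proposition~\ref{PropA7} and $\|\delta J\|_{L^\infty(Q_T)}\le C_1T^{3/4}\|\partial_y\delta v\|_{V_T}$ by Proposition~\ref{PropA1}. The genuine work is the middle term: using the telescoping decomposition
$$
\frac{J_{1y}v_{1y}}{J_1^2}-\frac{J_{2y}v_{2y}}{J_2^2}
=\frac{J_{1y}\,\delta v_y}{J_1^2}+\frac{\delta J_y\,v_{2y}}{J_1^2}-J_{2y}v_{2y}\frac{(J_1+J_2)\,\delta J}{J_1^2J_2^2},
$$
I estimate each piece in $L^2(Q_T)$ by combining $\|\delta v_y\|_{L^2(0,T;L^\infty)}\le C_1T^{1/4}\|\partial_y\delta v\|_{V_T}$ and $\|v_{2y}\|_{L^2(0,T;L^\infty)}\le C_1$ (Proposition~\ref{PropA0}) with $\|J_{iy}\|_{L^\infty(0,T;L^2)}\le\|J_0'\|_2+1$, $\|\delta J_y\|_{L^\infty(0,T;L^2)}\le T^{1/2}\|\partial_y\delta v\|_{V_T}$, and $\|\delta J\|_{L^\infty(Q_T)}\le C_1T^{3/4}\|\partial_y\delta v\|_{V_T}$ (Proposition~\ref{PropA1}), together with the bounds on $J_i$. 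This bounds the middle term by $C(T^{1/4}+T^{1/2}+T^{3/4})\|\partial_y\delta v\|_{V_T}$, and since $T\le 1$ the slowest-decaying contribution of $F$ is the resulting $T^{1/4}$. Hence $\|F\|_{L^2(Q_T)}\le CT^{1/4}\|\partial_y\delta v\|_{V_T}$, and the energy inequality gives the conclusion.

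The only delicate point is this middle term of $F$: it alone dictates the $T^{1/4}$ contraction rate (the other two terms decay faster), and the bound on the first term of $F$ crucially uses the $L^2(0,T;H^2)$ regularity of $V_2$ furnished by Proposition~\ref{PropA7}. Everything else is the coercive energy argument already carried out for Proposition~\ref{PropA7}.
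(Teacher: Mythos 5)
Your proposal is correct and follows essentially the same route as the paper: the same equation for $\delta V$ with the same splitting of the principal part, the same telescoping decomposition of the term $\frac{J_{1y}v_{1y}}{J_1^2}-\frac{J_{2y}v_{2y}}{J_2^2}$, testing with $-\partial_{yy}\delta V$, and invoking Propositions \ref{PropA0}, \ref{PropA1}, \ref{PropA4}, \ref{PropA5}, and \ref{PropA7} to extract the $T^{1/4}$ factor. The only blemish is a sign slip in the first term of your $F$ (it should be $-\mu\frac{\partial_{yy}V_2\,\delta J}{J_1J_2\varrho_0}$ given your identity for the principal part), which is immaterial since you only use the triangle inequality on $\|F\|_{L^2(Q_T)}$.
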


\begin{proof}
  Denote $J_i=\mathscr Q(v_i), \pi_i=\mathscr R(v_i), V_i=\mathscr Fv_i, i=1,2.$  Set $\delta J=J_1-J_2, \delta\pi=\pi_1-\pi_2,$ and $\delta V=V_1-V_2$. Then,
  \begin{equation*}
    \delta V_t-\frac\mu{\varrho_0J_1}\delta V_{yy}=-\frac{V_{2yy}}{J_1J_2\varrho_0}\delta J-\Big[\frac{\delta\pi_y}{\varrho_0}
    +\frac\mu{\varrho_0}\Big(\frac{J_{1y}}{J_1^2}\delta v_y+\frac{v_{2y}}{J_1^2}\delta J_y-\frac{J_1+J_2}{J_1^2J_2^2}\delta J
    J_{2y}v_{2y}\Big)\Big].
  \end{equation*}
  Testing the above with $-\delta V_{yy}$ and using Proposition \ref{PropA1}, one deduces
  \begin{eqnarray*}
    &&\frac12\frac{d}{dt}\|\delta V_y\|_2^2+\frac{\mu}{2\bar\varrho\bar J}\|\delta V_{yy}\|_2^2\\
    &\leq&\frac\mu{4\bar\varrho\bar J}\|\delta V_{yy}\|_2^2+C(\|V_{2yy}\|_2^2\|\delta J\|_\infty^2+\|\delta\pi_y\|_2^2+\|J_{1y}\|_2^2\|\delta v_y\|_\infty^2\\
    &&+\|v_{2y}\|_\infty^2\|\delta J_y\|_2^2+\|J_{2y}\|_2^2\|v_{2y}\|_\infty^2\|\delta J\|_\infty^2),
  \end{eqnarray*}
  which, integrating with respect to $t$, and applying Propositions \ref{PropA0}, \ref{PropA1}, \ref{PropA4}, \ref{PropA5}, and \ref{PropA7}, yields
  \begin{eqnarray*}
    \|\delta V_y\|_{V_T}^2&=&\sup_{0\leq t\leq t}\|\delta V_y\|_2^2+\int_0^T\|\delta V_{yy}\|_2^2dt\\
    &\leq& C(\|V_{2yy}\|_{L^2(Q_T)}^2\|\delta J\|_{L^\infty(Q_T)}^2+\|\delta\pi_y\|_{L^2(Q_T)}^2\\
    &&+\|J_{1y}\|_{L^\infty(0,T;L^2)}^2
    \|\delta v_y\|_{L^2(0,T;L^\infty)}^2+\|v_{2y}\|_{L^2(0,T;L^\infty)}^2\|\delta J_y\|_{L^\infty(0,T;L^2)}^2\\
    &&+\|J_{2y}\|_{L^\infty(0,T;L^2)}^2\|v_{2y}\|_{L^2(0,T;L^\infty)}^2\|\delta J\|_{L^\infty(Q_T)}^2)\\
    &\leq&CT^{\frac12}\|\delta v_y\|_{V_T}^2,
  \end{eqnarray*}
  proving the conclusion.
\end{proof}

\begin{corollary}
  \label{CorLoc}
  There is a positive constant $C_\#$ depending only on $\gamma, \mu, L, \underline\varrho$, $\bar\varrho$,
  $\underline J, \|J_0'\|_2, \|\pi_0\|_\infty, \|\pi_0'\|_2$, and $\|v_0'\|_2$, such that for any $M\geq C_\#$, it follows
  $$
  \|\partial_y\mathscr F(v)\|_{V_{T_\#}}\leq M,\quad\|\partial_y(\mathscr F(v_1)-\mathscr F(v_2))\|_{V_{T_\#}}\leq\frac12
  \|\partial_y(v_1-v_2)\|_{V_{T_\#}},
  $$
  for any $v, v_1, v_2\in\mathscr K_{M, T_\#}$, where
  $$
  T_\#:=\min\left\{\frac{1}{M^4}, \frac{1}{16C_\#^4}, 1, \left(\frac{\underline J}{2C_1}\right)^2\right\}.
  $$
\end{corollary}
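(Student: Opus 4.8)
The plan is to read off the two required inequalities directly from Propositions \ref{PropA7} and \ref{PropA8}, using the explicit thresholds built into $T_\#$ to absorb the relevant constants. First I would set $C_\#$ to be the maximum of the two constants denoted $C$ in Propositions \ref{PropA7} and \ref{PropA8}; since each of these depends only on $\gamma, \mu, L, \underline\varrho, \bar\varrho, \underline J, \|J_0'\|_2, \|\pi_0\|_\infty, \|\pi_0'\|_2$, and $\|v_0'\|_2$, the same is true of $C_\#$. Crucially, neither constant depends on $M$ or $T$, which is exactly what allows the choice $M\geq C_\#$ to be made consistently.

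Before invoking those propositions I must confirm that the standing hypotheses (\ref{H}), together with the auxiliary smallness $T\leq(\underline J/(2C_1))^2$ used throughout this section, are satisfied for the pair $(M, T_\#)$. This is immediate from the definition of $T_\#$: since $T_\#\leq 1/M^4$ we get $MT_\#^{1/4}\leq M\cdot M^{-1}=1$, while $T_\#\leq 1$ and $T_\#\leq(\underline J/(2C_1))^2$ are among the terms in the minimum. Hence $\mathscr K_{M,T_\#}$ is a legitimate closed subset of $X_{T_\#}$, and the lower bound $\mathscr Q(v)\geq\underline J/2$ from Proposition \ref{PropA1}(ii) is in force.

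With the constraints verified, the self-mapping bound follows since Proposition \ref{PropA7} yields $\|\partial_y\mathscr F(v)\|_{V_{T_\#}}\leq C_\#\leq M$ for every $v\in\mathscr K_{M,T_\#}$, using $M\geq C_\#$. For the contraction, Proposition \ref{PropA8} gives $\|\partial_y(\mathscr F(v_1)-\mathscr F(v_2))\|_{V_{T_\#}}\leq C_\#T_\#^{1/4}\|\partial_y(v_1-v_2)\|_{V_{T_\#}}$, and the choice $T_\#\leq 1/(16C_\#^4)$ forces $T_\#^{1/4}\leq 1/(2C_\#)$, so the prefactor satisfies $C_\#T_\#^{1/4}\leq\frac12$, which is the desired second inequality. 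The argument is essentially bookkeeping, and the only place where care is genuinely needed is ensuring that the single constant $C_\#$ dominates the constants in both propositions simultaneously, and that the two thresholds $1/M^4$ and $1/(16C_\#^4)$ entering $T_\#$ are precisely the ones that close the self-map and the contraction requirements respectively; no further estimate is required.
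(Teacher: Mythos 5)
Your proposal is correct and follows essentially the same route as the paper: take $C_\#$ to dominate the constants of Propositions \ref{PropA7} and \ref{PropA8}, verify that $(M,T_\#)$ satisfies (\ref{H}) and $T_\#\leq\left(\frac{\underline J}{2C_1}\right)^2$, and then use $M\geq C_\#$ for the self-map bound and $T_\#\leq\frac{1}{16C_\#^4}$ to make the contraction factor $C_\#T_\#^{1/4}\leq\frac12$. Your explicit check of the standing smallness hypotheses is exactly the (implicit) bookkeeping in the paper's proof, so no gap remains.
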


\begin{proof}
By Proposition \ref{PropA7} and Proposition \ref{PropA8}, there is a positive constant $C_\#$ depending only on $\gamma, \mu, L, \underline\varrho, \bar\varrho,
  \underline J, \|J_0'\|_2, \|\pi_0\|_\infty, \|\pi_0'\|_2$, and $\|v_0'\|_2$, such that
 \begin{equation}
 \label{L8}
  \|\partial_y\mathscr F(v)\|_{V_{T}}\leq C_\#,\quad\|\partial_y(\mathscr F(v_1)-\mathscr F(v_2))\|_{V_{T}}\leq C_\#T^{\frac14}
  \|v_1-v_2\|_{V_{T}},
  \end{equation}
  for any $v, v_1, v_2\in\mathscr K_{M,T}$, for any $M,T$ satisfying
  $$
  MT^{\frac14}\leq1, \quad T\leq1, \quad T\leq\left(\frac{\underline J}{2C_1}\right)^2.
  $$
  For any $M\geq C_\#$, choose
  $$
  T_\#:=\min\left\{\frac{1}{M^4}, \frac{1}{16C_\#^4}, 1, \left(\frac{\underline J}{2C_1}\right)^2\right\}.
  $$
  Then, by (\ref{L8}), one has
  $$
  \|\partial_y\mathscr F(v)\|_{V_{T_\#}}\leq M,\quad\|\partial_y(\mathscr F(v_1)-\mathscr F(v_2))\|_{V_{T_\#}}\leq \frac12
  \|v_1-v_2\|_{V_{T_\#}},
  $$
  for any $v, v_1, v_2\in\mathscr K_{M, T_\#}$, proving the conclusion.
\end{proof}

\subsection{Properties of $\mathscr F$ and the local well-posedness}

\begin{proof}[\textbf{Proof of Proposition \ref{PropLocal}}]
Let $C_\#$ be the positive constant in Corollary \ref{CorLoc}. Set $M=C_\#$ and let $T_\#$ be the corresponding
positive time in Corollary \ref{CorLoc}. Recall the definition of $\mathscr K_{M_\#,T_\#}$ and define
$|||v|||:=\|v_y\|_{V_T}$, for any $v\in \mathscr K_{M_\#,T_\#}$. By the Poincar\'e inequality, one can easily
check that $|||\cdot|||$ is a norm on the space $X_{T_\#}$ and is equivalent to the $L^\infty(0,T_\#;H_0^1)\cap L^2(0,T_\#; H^2)$ norm. Consequently, $\mathscr K_{M_\#,T_\#}$ is a completed metric space, equipped with the
metric $d(v_1,v_2):=|||v_1-v_2|||=\|\partial_y(v_1-v_2)\|_{V_T}$. Let $\mathscr Q, \mathscr R, \mathscr F$ be the mappings defined as before.
By Corollary \ref{CorLoc}, $\mathscr F$ is a contractive mapping
on $\mathscr K_{M_\#,T_\#}$. Therefore, by the
contractive mapping principle, there is a unique fixed point, denoted by $v_\#$, to $\mathscr F$ on $\mathscr K_{M_\#,T_\#}$.
Set $J_\#=\mathscr Q(v_\#)$ and $\pi_\#=\mathscr R(v_\#)$. By the definitions of $\mathscr Q(v_\#)$ and $\mathscr R(v_\#)$, one can easily check that $(J_\#, v_\#, \pi_\#)$ is a solution to system (\ref{EQJ})--(\ref{EQpi}), subject to (\ref{BC})--(\ref{IC}). The regularities of $(J_\#, v_\#, \pi_\#)$ can be verified through straightforward computations
to the expressions of $\mathscr Q(v)$ and $\mathscr R(v)$ and using (\ref{L6}). Since the calculations are standard, we omit
the details here.
\end{proof}

\section*{Acknowledgments}
This work was supported in part by the National Natural Science Foundation of China NSFC 11971009, NSFC 11771156, and NSFC 11871005, the start-up grant of the South China Normal University 550-8S0315, and the Hong Kong RGC grant CUHK 14302917.

\end{document}